\setlist[enumerate]{label=\rm{(\arabic*)}}
\setlist[enumerate,2]{label=\rm({\it\roman*})}
\setlist[itemize]{label=\raisebox{0.25ex}{\tiny$\bullet$}}
\newtheorem{lemm}{Lemma}[section]
\newtheorem{prop}[lemm]{Proposition}
\newtheorem{coro}[lemm]{Corollary}
\newtheorem{theo}[lemm]{Theorem}
\theoremstyle{remark}
\newtheorem{rem}[lemm]{Remark}
\newcommand\dashmapsto{\mapstochar\dashrightarrow}
\newcommand{\C}{\mathbb{C}}
\newcommand{\A}{\mathbb{A}}
\newcommand{\p}{\mathbb{P}}
\newcommand{\eps}{{\varepsilon}}
\newcommand{\Bir}{\mathrm{Bir}}
\renewcommand{\phi}{\varphi}
\newcommand{\Lp}{\mathcal{L}_{\text{\rm plane}}}
\newcommand{\Lq}{\mathcal{L}_{\text{\rm quadric}}}
\DeclareMathOperator{\bir}{Bir}
\DeclareMathOperator{\PGL}{PGL}
\DeclareMathOperator{\Pic}{Pic}
\DeclareMathOperator{\NE}{NE}
\newcolumntype{L}{>{$}l<{$}}
\newcolumntype{C}{>{$}c<{$}}
\title{On birational maps from cubic threefolds}
\date{\today}
\author{J\'er\'emy Blanc}
\address{Mathematisches Institut \\ 
Universit\"at Basel \\
Rheinsprung 21 \\
CH-4051 Basel \\
Switzerland}
\email{jeremy.blanc@unibas.ch}
\author{St\'ephane Lamy}
\address{Institut de Math\'ematiques de Toulouse,
Universit\'e Paul Sabatier,
118 route de Narbonne,
31062 Toulouse Cedex 9, France}
\email{slamy@math.univ-toulouse.fr}
\thanks{The authors gratefully acknowledge support by the Swiss National Science Foundation Grant  ``Birational Geometry'' PP00P2\_128422 /1 and by the French National Research Agency Grant ``BirPol'', ANR-11-JS01-004-01.}
\begin{document}

\begin{abstract}
We characterise smooth curves in a smooth cubic threefold whose blow-ups produce a weak-Fano threefold.
These are curves $C$ of genus~$g$ and degree~$d$, such that (i) $2(d-5) \le g$ and $d\le 6$; (ii) $C$ does not admit a 3-secant line in the cubic threefold.
Among the list of ten possible such types $(g,d)$, two yield Sarkisov links that are birational selfmaps of the cubic threefold, namely $(g,d) = (0,5)$ and $(2,6)$.  
Using the link associated with a curve of type $(2,6)$, we are able to produce the first example of a pseudo-automorphism with dynamical degree greater than~$1$ on a smooth threefold with Picard number~$3$.
We also prove that the group of birational selfmaps of any smooth cubic threefold contains elements contracting surfaces birational to any given ruled surface.
\end{abstract}
\maketitle

\section*{Introduction}

The two archetypal examples of Fano threefolds that are not rational are smooth cubics and quartics in $\p^4$. 
The proofs go back to the early 1970', and are quite different in nature.
The non-rationality of a smooth quartic $X$ was proved by Iskovskikh and Manin \cite{IM} by studying the group $\Bir(X)$ of birational selfmaps of $X$: They show that this group is equal to the automorphism group of $X$, which is finite.
On the other hand, the proof of the non-rationality of a smooth cubic $Y$ by Clemens and Griffiths \cite{CG} relies on the study of the intermediate Jacobian of such a threefold: They prove that the intermediate Jacobian of~$Y$ is not a direct sum of Jacobian of curves, as it would be the case if $Y$ were rational.
Once we know that a smooth cubic threefold is not rational one can deduce some interesting consequences about the group $\bir(Y)$. 
For instance, the non-rationality but unirationality of the cubic $Y$ implies that it does not admit any non-trivial algebraic $\mathbb G_a$-action (see Proposition~\ref{Prop:Daigle}).
It is tempting to try to go the other way around: study the group $\Bir(Y)$ first with the aim of pointing out qualitative differences with the group $\Bir(\p^3)$, hence obtaining an alternative proof of the non-rationality.
This was one of the motivations of the present work.
However we must admit that the answers we get indicate that the group $\Bir(Y)$ is in many respects quite as complicated as $\Bir(\p^3)$, and it is not clear whether the above strategy could be successful.   

A first indication that the group $\Bir(Y)$ is quite large comes from the generalisation  of the Geiser involution of $\p^2$.
Let us recall this classical construction.
Consider $p \in Y$ a point. 
A general line of $\p^4$ through $p$ intersects $Y$ in two other points, and one can define a birational involution of~$Y$ by exchanging any such two points.
One can see this involution as a Sarkisov link: first blow-up $Y$ to obtain a weak-Fano threefold $X$, then flop the strict transforms of the 6 lines through $p$, and then blow-down the transform of the tangent hyperplane section at $p$ to come back to $Y$.
It is natural to try to obtain new examples by blowing-up curves instead of a point.

The general setting of this paper (where we always work over the field $\C$ of complex numbers) is to consider $Y\subset \p^4$  a smooth cubic threefold, $C\subset Y$  a smooth (irreducible) curve of genus~$g$ and degree~$d$, and $\pi\colon X \to Y$ the blow-up of $C$.
We are interested in classifying the pairs $(g,d)$ such that $X$ is (always, or 
generically) weak-Fano, that is, the anticanonical divisor $-K_X$ is big and nef.
To express our main result it is convenient to introduce the following lists of
pairs of genus and degree:
\begin{align*}
\Lp &= \{\, (0,1),\, (0,2),\, (0,3),\, (1,3),\, (1,4),\, (4,6)\, \}; \\
\Lq &= \{\, (0,4),\, (0,5),\, (1,5),\, (2,6)\, \}. 
\end{align*}
We shall see (Lemma~\ref{Lemm:Smalldegreecurves} and Proposition~\ref{Prop:12 possibilities}) that the 12 pairs in $\Lp \cup \Lq\cup \{(2,5),(3,6)\}$ correspond to all $(g,d)$ with $2(d-5) \le g$ and $d \le 6$ such that there exists a smooth curve $C\subset Y$ of type $(g,d)$, that is, of genus~$g$ and degree~$d$. However, curves of type $(2,5)$ or $(3,6)$ always admit a $3$-secant line in $Y$ (Proposition~\ref{Prop:NecessaryConditionsPropre}), which is an obvious obstruction for $X$ to be weak-Fano.

We obtain a result similar to the case of $\p^2$ or $\p^3$ (see \cite{BL}): the blow-up of a smooth curve $C\subset Y$ of type $(g,d)$ is weak-Fano if and only  $2(d-5) \le g$, $d \le 6$ and there is no $3$-secant line to $C$. More precisely, we obtain the following:
\renewcommand\thelemm{\Alph{lemm}}
\begin{theo}\label{Thm:WeakFano}
Let $Y\subset \p^4$ be a smooth cubic threefold, let $C\subset Y$ be a smooth curve of genus~$g$ and degree~$d$, and denote by $X\to Y$ the blow-up of~$C$.
Then:
\begin{enumerate}[leftmargin=*]
\item \label{case:thm1} If $X$ is a weak-Fano threefold, then $C$ is contained in a smooth hyperquadric section, $\lvert -K_X\rvert$ is base-point-free, $(g,d) \in \Lp \cup \Lq$ and there is no $3$-secant line to $C$ in $Y$. Moreover, $C$ is contained in a hyperplane section if and only if $(g,d)\in \Lp$.
\item \label{case:thm2} Conversely :
\begin{enumerate}
\item \label{case:thmi} If $(g,d) \in \Lp$ then $X$ is weak-Fano and more precisely:
\begin{itemize}[leftmargin=*]
\item If $(g,d) = (0,1)$ or $(1,3)$, then $X$ is Fano;
\item If $(g,d) = (1,4)$ or $(4,6)$, then $X$ is weak-Fano with divisorial anticanonical morphism; 
\item If $(g,d) = (0,2)$ or $(0,3)$, then $X$ is weak-Fano with small anticanonical morphism;
\end{itemize}
\item \label{case:thmii} If $(g,d) \in \Lq$ and $C$ is a curve without any $3$-secant line in $Y$, then $X$ is weak-Fano with small anticanonical morphism.

Moreover for these four cases, there exists a dense open set of such curves in the Hilbert scheme parametrising smooth curves of genus~$g$ and degree~$d$ in $Y$. 
\end{enumerate}
\end{enumerate}
\end{theo}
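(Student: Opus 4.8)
The plan is to reduce both implications to the nefness of a single explicit divisor on a surface section of $Y$. Write $\pi\colon X\to Y$ for the blow-up and $E$ for its exceptional divisor; since $C$ is smooth, $\Pic(X)=\z\pi^*H\oplus\z E$ and $-K_X=2\pi^*H-E$, and from $(\pi^*H)^3=3$, $(\pi^*H)^2E=0$, $(\pi^*H)E^2=-d$, $E^3=2-2g-2d$ (the last by adjunction, since $\deg N_{C/Y}=2g-2+2d$) one gets $(-K_X)^3=22+2g-4d$ and $(-K_X)^2E=2d+2-2g$. As a nef and big divisor on a threefold has positive top self-intersection, $X$ being weak-Fano already forces $2(d-5)\le g$. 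The key point is this: if $C$ lies on a surface section $S=Y\cap T$ with $T$ a hyperplane or a quadric, chosen smooth along $C$, then the strict transform $\widetilde S\subset X$ is isomorphic to $S$, one has $E|_{\widetilde S}=C$ and $-K_X|_{\widetilde S}=2H|_S-C$, and for any curve $\Gamma\subset Y$ not contained in $S$ the bound $\operatorname{length}(\Gamma\cap C)\le\operatorname{length}(\Gamma\cap S)=(\deg T)\cdot\deg\Gamma$ gives $-K_X\cdot\widetilde\Gamma=2\deg\Gamma-\operatorname{length}(\Gamma\cap C)\ge(2-\deg T)\deg\Gamma\ge 0$; as the curves contained in $E$ are automatically $-K_X$-positive, this shows that $-K_X$ is nef if and only if $2H|_S-C$ is nef on $S$.

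For \eqref{case:thm1}, assume $X$ weak-Fano, so $2(d-5)\le g$. A line $\ell\subset Y$ with $\#(\ell\cap C)\ge 3$ would have $-K_X\cdot\widetilde\ell=2-\#(\ell\cap C)<0$, so $C$ has no $3$-secant line in $Y$; with Proposition~\ref{Prop:NecessaryConditionsPropre} this excludes $(g,d)\in\{(2,5),(3,6)\}$, and combined with the classification of Proposition~\ref{Prop:12 possibilities} -- the bound $d\le 6$ itself being a by-product of the analysis of the quadrics through $C$ -- one gets $(g,d)\in\Lp\cup\Lq$. That $C$ lies on a quadric section follows from Kawamata--Viehweg vanishing and Riemann--Roch: $h^0(X,-K_X)=\chi(X,-K_X)=14+g-2d$ (using $\chi(\mathcal O_X)=1$ and $(-K_X)c_2(X)=24$), and since this equals $h^0(Y,\mathcal I_{C/Y}(2))$ and is $\ge 4$ by $2(d-5)\le g$, the curve $C$ lies on a net of quadric sections of $Y$. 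The no-$3$-secant condition is exactly what prevents the base scheme of the quadrics through $C$ from having a line (or larger) component off $C$, and this gives both base-point-freeness of $\lvert-K_X\rvert$ and, via Bertini on $X$, smoothness of the general quadric section through $C$. Finally ``$C$ lies in a hyperplane section $\Leftrightarrow(g,d)\in\Lp$'' is a type-by-type computation of $h^0(\p^4,\mathcal I_C(1))$: the six plane types span at most a $\p^3$ (for $(4,6)$ because a smooth genus-$4$ degree-$6$ curve in $\p^4$ must be canonically embedded and hence lies in a $\p^3$), while a general curve of each of the four quadric types is linearly normal and spans $\p^4$.

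For \eqref{case:thm2}, fix $(g,d)\in\Lp\cup\Lq$. By Proposition~\ref{Prop:12 possibilities} smooth curves of this type exist in $Y$, and in the four quadric cases I would first show that the smooth irreducible curves that lie on a smooth degree-$6$ K3 section of $Y$ and have no $3$-secant line form a dense open subset of the relevant Hilbert scheme -- each failure being a closed condition, and a good curve being produced by an explicit construction on such a K3. For such a curve, knowing its class in $\Pic(S)$ one checks that $2H|_S-C$ is nef on $S$ -- automatic on the del Pezzo surface of degree $3$ arising in the plane cases, and on the K3 reducing to the no-$3$-secant hypothesis once $\Pic(S)=\z H|_S\oplus\z C$ for the general member -- whence $-K_X$ is nef by the criterion above and, since $(-K_X)^3=22+2g-4d>0$, also big, so $X$ is weak-Fano. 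The anticanonical morphism $\phi\colon X\to\overline X$ (semiample by the base-point-free theorem) is then classified: it is an isomorphism (so $X$ is Fano) for $(0,1)$ and $(1,3)$, where $2H|_S-C$ is ample on $S$ and $Y$ has no line $2$-secant to $C$; it contracts the divisor $\widetilde S$ (onto a curve, resp.\ a point), a divisorial morphism, for $(1,4)$ and $(4,6)$, where $(2H|_S-C)^2=(-K_X)^2\widetilde S=0$; and it is a small morphism in the remaining cases -- $(0,2),(0,3)$ and the four quadric cases -- where $\widetilde S$ is not contracted ($(-K_X)^2\widetilde S>0$) while $\phi$ contracts the finitely many curves of $Y$ of zero intersection against $-K_X$ (lines, and in the quadric cases also conics, meeting $C$ maximally).

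The hardest steps, I expect, are two. In \eqref{case:thm1}, deducing base-point-freeness of $\lvert-K_X\rvert$ and smoothness of the general quadric section from the mere absence of a $3$-secant line requires a precise description of the base scheme of the quadrics through $C$, and this is also where the bound $d\le 6$ is forced. In \eqref{case:thm2}, the crux is to prove that for the general member of each of the four quadric Hilbert schemes the K3 section has Picard number exactly $2$ -- which is what reduces the nefness of $2H|_S-C$ to the no-$3$-secant condition -- and to establish the density and openness statements for those four families.
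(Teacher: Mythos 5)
Your reduction of nefness to the surface $S$ is sound in spirit (and close to how the paper handles the complete-intersection cases $(1,3)$ and $(4,6)$), but the plan has a genuine gap at its crux, namely the four $\Lq$ cases. You propose to verify nefness of $2H|_S-C$ on a smooth K3 hyperquadric section $S$ by arranging $\Pic(S)=\z H|_S\oplus\z C$ \emph{for the general member} of the Hilbert scheme. This can only ever prove the statement for a general curve, whereas Theorem~\ref{Thm:WeakFano}\ref{case:thm2}\ref{case:thmii} asserts weak-Fano-ness for \emph{every} curve of type $(g,d)\in\Lq$ without a $3$-secant line; this is exactly the distinction the introduction draws with \cite{ACM,CM}, where the Picard-rank-$2$ K3 argument yields only very special examples. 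The paper instead proves (Proposition~\ref{Prop:DelPezzo4}) that every such curve lies on a smooth degree-$4$ Del Pezzo surface $S_0=Q_1\cap Q_2\subset\p^4$ (not contained in $Y$), via projection from a well-chosen secant line and the rigidity Lemmas~\ref{Lemm:DelPezzo4easy}--\ref{Lemm:DelPezzo4hard}; an explicit computation of the residual system $\Lambda_{S_0}-C$ on $S_0$ (Table~\ref{tab:4}) then shows the quadrics of $\p^4$ through $C$ have no base point off $C$ (off the unique $3$-secant for $(0,5)$), which gives base-point-freeness of $\lvert-K_X\rvert$, hence nefness, for \emph{all} such curves at once. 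Note also that the existence of a smooth hyperquadric section through $C$ is a \emph{conclusion} of this analysis, not an input, and even granting it, your circularity is compounded by a further issue: a smooth hyperquadric section containing $C$ exists precisely because the blow-up situation is as good as claimed, and proving $\Pic(S)$ has rank $2$ for a K3 inside a \emph{fixed} smooth cubic $Y$ is a hard Noether--Lefschetz-type problem you have not addressed.

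Two secondary points. First, in part \ref{case:thm1} you assert that the absence of a $3$-secant line "is exactly what prevents" the base scheme of the quadrics through $C$ from being bad, yielding base-point-freeness and Bertini-smoothness; this is not justified and is in fact the content of the hardest propositions of the paper (the base scheme is controlled via the degree-$4$ Del Pezzo, not via the $3$-secant condition alone). Second, your parenthetical claim that curves contained in $E$ are automatically $-K_X$-positive is false: for $(g,d)=(4,6)$ the section of $E$ cut out by the strict transform of the cubic surface $S$ satisfies $E\cdot\Gamma=C^2_S=12$ and hence $-K_X\cdot\Gamma=0$; nonnegativity on $E$ requires an argument (in the paper it is free, since $\lvert-K_X\rvert$ is shown to be base-point-free). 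Finally, the non-emptiness and density of the locus of curves with no $3$-secant line, which you defer, occupies a full subsection of the paper (explicit constructions on singular hyperquadric sections, plus a dimension count for $(2,6)$), so it cannot be waved through as "each failure being a closed condition".
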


We refer the reader to the introduction of \cite{BL} for more information on Sarkisov links and the classification of weak-Fano threefolds of Picard number~$2$.
Let us mention that among the cases covered by Theorem~\ref{Thm:WeakFano},
it turns out that two yield Sarkisov links from the cubic to itself, namely $(g,d) = (0,5)$ and $(2,6)$: this follows from the exhaustive lists computed in the paper \cite{CM}.
The case $(2,6)$ is of particular interest.
First, it gives an example of an element of $\bir(Y)$ that contracts a non-rational ruled surface. 
This lead us to the question whether there were restrictions on the birational type of surfaces contracted by an element of $\Bir(Y)$, and quite surprisingly the answer is that, as in the case of $\p^3$, there is no such obstruction.

\begin{prop} \label{Prop:mainB}
Let $Y\subset \p^3$ be a smooth cubic hypersurface, and let $\Gamma$ be an abstract irreducible curve.
Then, there exists a birational map in $\Bir(Y)$ that contracts a surface birational to $\Gamma\times \p^1$.
\end{prop}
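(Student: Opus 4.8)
The plan is to realise the required selfmap as an ``elementary transformation'' of a conic bundle model of $Y$ along a curve birational to $\Gamma$.

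\emph{Step 1: reduce to a conic bundle.} Fix a general line $\ell\subset Y$ and let $p\colon\tilde Y\to Y$ be its blow-up; projection from $\ell$ exhibits $\tilde Y$ as a conic bundle $\rho\colon\tilde Y\to\p^2$ with discriminant a quintic curve $\Delta\subset\p^2$. Since the property of containing in $\Bir$ an element contracting a surface birational to $\Gamma\times\p^1$ is invariant under birational conjugation and $\tilde Y$ is birational to $Y$, it suffices to produce such an element of $\Bir(\tilde Y)$.

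\emph{Step 2: put $\Gamma$ in the base and find a section.} Every curve is birational to a plane curve, so choose $B\subset\p^2$ birational to $\Gamma$, not contained in $\Delta$ and transverse to it, with nodes lying off $\Delta$; after blowing up these nodes and replacing $\tilde Y$ by the corresponding blow-up along the smooth conics lying over them, we may assume $B$ smooth and $\tilde Y$ still smooth. Put $S_B:=\rho^{-1}(B)$, a smooth surface with a conic-bundle structure $S_B\to B$ of general fibre $\p^1$. As $\C(B)\cong\C(\Gamma)$ is a $C_1$-field, Tsen's theorem gives a rational point of the generic conic, hence a rational section, which extends to a morphism $\sigma\colon B\to S_B$; thus $S_B$ is birational to $B\times\p^1\cong\Gamma\times\p^1$, and $R:=\sigma(B)\subset\tilde Y$ is a smooth curve isomorphic to $\Gamma$ meeting each conic fibre over $B$ in one point. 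We choose $\sigma$ avoiding the finitely many nodes of reducible fibres.

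\emph{Step 3: blow up and contract.} Let $q\colon W\to\tilde Y$ be the blow-up of $R$, with exceptional divisor $G$, and let $S_B'$ be the strict transform of $S_B$; since $R$ is a Cartier divisor on the smooth surface $S_B$ one has $S_B'\cong S_B$ and $S_B'=q^*S_B-G$. Writing $f$ for the strict transform of a conic fibre over a general point of $B$, the relations $-K_{\tilde Y}\cdot(\text{conic fibre})=2$, $S_B\cdot(\text{conic fibre})=0$ and $G\cdot f=1$ give $S_B'\cdot f=-1$ and $K_W\cdot f=-1$. Hence $S_B'$ is swept out by a $B$-family of $K_W$-negative curves along which it is negative, and running the $K_W$-MMP (equivalently, applying Nakano's contraction criterion away from the finitely many reducible fibres of $S_B\to B$) yields a birational morphism $c\colon W\to W'$ onto a normal projective threefold contracting exactly $S_B'$, onto a curve isomorphic to $B\cong\Gamma$. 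As $W'$ is again birational to $Y$, composing $c\circ q^{-1}$ with fixed birational maps $Y\dashrightarrow\tilde Y$ and $W'\dashrightarrow Y$ produces $\phi\in\Bir(Y)$ that contracts a surface of $Y$ birational to $S_B$, hence to $\Gamma\times\p^1$.

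The main obstacle is the contraction in Step~3: one must check that $S_B'$ genuinely contracts onto a curve inside a projective threefold, and in particular that each reducible fibre of $S_B\to B$ — occurring over the unavoidable points of $B\cap\Delta$ — is contracted to a single point, so that the image is a curve birational to $\Gamma$ rather than one of smaller genus; these reducible fibres may force mild singularities on $W'$, which is harmless. The remaining verifications (general-position choices keeping all varieties smooth, and that $S_B$ is not contracted by the auxiliary birational maps) are routine.
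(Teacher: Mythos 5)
Your Steps 1 and 2 run parallel to the paper: the paper also passes to the conic bundle $\rho\colon\tilde Y\to\p^2$ obtained by blowing up a line, picks a plane curve $B$ birational to $\Gamma$ not contained in the discriminant, and uses Tsen's theorem to find a section of the conic bundle over $B$ (Lemma~\ref{Lemm:ExtensionSection}). The divergence, and the gap, is in Step 3. The paper never leaves $\Bir(\tilde Y)$: it extends the Tsen section over $B$ to a section $s$ of the ambient $\p^2$-bundle whose image meets the conic bundle exactly over $B$, and defines a \emph{fibrewise birational involution} $\iota\in\Bir(\tilde Y/\p^2)$ by projecting each conic fibre from $s(p)$ (Lemma~\ref{Lemm:ConicBundleInvolution}). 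Over a general point this is an honest involution of the conic; over points of $B$, where $s(p)$ lies \emph{on} the conic, the explicit formula degenerates and the whole fibre is contracted to $s(p)$. So $\rho^{-1}(B)$, which is birational to $\Gamma\times\p^1$, is contracted by a genuine selfmap of $\tilde Y$, hence of $Y$.

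Your elementary transformation, by contrast, only produces a birational map $Y\dashrightarrow W'$ onto a \emph{different} threefold that contracts $S_B$; the passage back to $Y$ is exactly where the content lies, and your argument does not supply it. If you close up with the tautological identification $h=p\circ q\circ c^{-1}\colon W'\dashrightarrow Y$, the composite is the identity; and since every birational map $W'\dashrightarrow Y$ is of the form $\alpha\circ p\circ q\circ c^{-1}$ for some $\alpha\in\Bir(Y)$, composing $c\circ q^{-1}\circ p^{-1}$ with an unspecified ``fixed birational map $W'\dashrightarrow Y$'' yields an arbitrary element of $\Bir(Y)$ and proves nothing. What you must exhibit is a return map $h$ whose fundamental locus does not contain the curve $c(S_B')$ (otherwise $h$ may extract it back to a surface and the composite no longer contracts $S_B$) — and producing such an $h$ is essentially the original problem. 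This can be repaired, e.g.\ by performing a second elementary transformation over the same curve $B$ with a different section, so that the composite is a selfmap of $\tilde Y$ over $\p^2$ still contracting $\rho^{-1}(B)$; the paper's fibrewise involution is precisely a clean packaging of such a two-sided construction. (Secondarily, the existence of the divisorial contraction $c\colon W\to W'$ in the projective category requires the class of $\tilde f$ to span a $K_W$-negative extremal ray, which you assert via ``running the MMP'' but do not verify; the paper's route avoids this entirely by never needing the contraction to exist as a morphism.)
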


A second interesting feature of curves of type $(2,6)$ is that they come in pair: there exist pencils of hyperquadric sections on $Y$ whose base locus is the union of two curves of type $(2,6)$. 
To each of these curves is associated an involution, and the composition of the two involutions yields an interesting map from the dynamical point of view.

\begin{prop} \label{Prop:mainC}
Let $Y \subset \p^4$ be a smooth cubic threefold.
There exists a pencil $\Lambda$ of hyperquadric sections on $Y$, whose base locus is the union of two smooth curves $C_1, C_2$ of genus $2$ and degree $6$, and such that there exists a pseudo-automorphism of dynamical degree equal to $49+20\sqrt{6}$ on the threefold~$Z$ obtained from $Y$ by blowing-up successively $C_1$ and $C_2$. 
\end{prop}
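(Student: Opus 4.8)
The plan is to construct the pencil explicitly and then analyse the action on cohomology of the composition of the two Sarkisov involutions. First I would produce the pencil $\Lambda$: by a dimension count (or by a direct construction inside a suitable coordinate system) one finds a pencil of hyperquadric sections of $Y$ whose base locus is a reducible surface. The generic element of $\Lambda$ is a smooth hyperquadric section $S$ of $Y$ (a degree-$6$ surface in $\p^4$), and the intersection of two general members is a curve of degree $12$ and arithmetic genus that one computes; the task is to choose the pencil so that this intersection curve splits as $C_1 \cup C_2$ with each $C_i$ a smooth curve of type $(2,6)$ meeting the other in the expected number of points. I would check that $C_1,C_2$ have no $3$-secant line in $Y$ — generically true by the genericity statement in Theorem~\ref{Thm:WeakFano}\ref{case:thmii} — so that each blow-up $X_i \to Y$ is weak-Fano with small anticanonical morphism, hence (by \cite{CM}) gives a Sarkisov link from $Y$ to itself, i.e. a birational involution $\sigma_i \in \Bir(Y)$.

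Next I would take $Z \to Y$ to be the successive blow-up of $C_1$ then (the strict transform of) $C_2$; since $C_1$ and $C_2$ meet transversally this is a smooth threefold of Picard number~$3$, with $\Pic(Z) = \z H \oplus \z E_1 \oplus \z E_2$ where $H$ is the pullback of the hyperplane class and $E_i$ the exceptional divisors. The point is that each involution $\sigma_i$ lifts to a \emph{pseudo-automorphism} of $Z$: blowing up $C_2$ first and then $C_1$ gives the ``other'' model of the same link for $\sigma_1$, and because $C_1,C_2$ are interchanged appropriately by the geometry of the pencil, $Z$ dominates both resolutions of both links; the flops involved in the two Sarkisov links are exactly the flopping contractions of $Z$, so $\sigma_1$ and $\sigma_2$ act on $Z$ by compositions of flops, i.e. as pseudo-automorphisms. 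I would make this precise by writing down the resolution diagram of each $\sigma_i$ and checking that the common resolution is $Z$ (up to flops), using the fact that the indeterminacy curves of $\sigma_i$ on $Y$ are swept out by the secant/flopped lines and are accounted for by the second blow-up.

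With the pseudo-automorphisms $\sigma_1^*,\sigma_2^*$ acting on $\Pic(Z)\otimes\R$ (equivalently on $N^1(Z)$, a rank-$3$ lattice), I would compute the two $3\times 3$ matrices explicitly. Each $\sigma_i^*$ is an involution fixing a hyperplane and is determined by how it transforms $H$ and $E_i$: using $-K_Z = 3H - E_1 - E_2$ is preserved (pseudo-automorphisms preserve $-K$ in the weak-Fano situation), $\sigma_i^*$ fixes $E_j$ for $j\neq i$ up to the flop bookkeeping, and $\sigma_i^* H$ is read off from the degree of the image of a general hyperplane section under the quadro-cubic-type transformation. Multiplying the two matrices and computing the spectral radius of $\sigma_2^*\circ\sigma_1^*$ should yield the eigenvalue $49 + 20\sqrt{6}$ (note $49 + 20\sqrt6 = (5 + 2\sqrt6)^2 = (\sqrt2+\sqrt3)^4$, consistent with a product of two involutions each built from ``multiplication by $5\pm 2\sqrt6$''), and then the dynamical degree equals this spectral radius because on $Z$ the action on $N^1$ is by a genuine linear map with no cancellation (the map is a pseudo-automorphism, so iterates pull back classes without extra base loci, giving $\deg = $ spectral radius). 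The main obstacle I expect is the second step: verifying that the composition of flops is globally well-defined on the \emph{single} threefold $Z$ — that is, that the two links genuinely share the common resolution $Z$ and that no further blow-up is forced — which requires a careful analysis of how the flopping curves of the $(2,6)$-link sit relative to $C_1$ and $C_2$, and of the transversality of $C_1\cap C_2$; once that is settled the cohomological computation is routine linear algebra.
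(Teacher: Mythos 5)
Your overall architecture matches the paper's (construct the pencil, attach an involution to each $C_i$, lift both to $Z$, multiply two $3\times 3$ matrices and take the spectral radius, which is indeed $49+20\sqrt6=(5+2\sqrt6)^2$), and the pencil construction and final linear algebra are essentially fine: the paper produces $C_2$ via Proposition~\ref{Prop:Pairof26}, computing inside the degree-$4$ Del Pezzo surface of Proposition~\ref{Prop:DelPezzo4} that the residual curve of $C_1\sim 4L-2E_1-E_2-\cdots-E_5$ in $\lvert -2K_S\rvert$ is again of type $(2,6)$ and has no $3$-secant line. But the step you yourself flag as ``the main obstacle'' is a genuine gap, and the route you sketch for it (exhibiting $Z$ as a common resolution of the two Sarkisov links and tracking the flopping curves) is not what closes it; as written you have no argument that the lift of $\sigma_i$ to $Z$ does not contract a divisor.

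The missing idea is to define the involution not as a birational selfmap of $Y$ coming from the Sarkisov link, but directly on $X_i$ as the deck transformation of the anticanonical morphism: since $(-K_{X_i})^3=2$ and $\dim\lvert -K_{X_i}\rvert=3$, the base-point-free system $\lvert -K_{X_i}\rvert$ gives a generically $2{:}1$ morphism $\sigma_i\colon X_i\to\p^3$, and the fibre exchange $\tau_i$ is automatically a pseudo-automorphism of $X_i$ (it is regular away from the finitely many $K$-trivial curves). The point that makes the lift to $Z$ work is then concrete: the strict transform $\tilde C_{3-i}\subset X_i$ equals the intersection of the strict transforms of the two hyperquadric sections $Q_1,Q_2$, i.e.\ of two members of $\lvert -K_{X_i}\rvert$, hence it is exactly $\sigma_i^{-1}$ of a line in $\p^3$ and is therefore $\tau_i$-invariant. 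This is what justifies both that $\tau_i$ lifts to a pseudo-automorphism of the blow-up of $\tilde C_{3-i}$ and that the lifted action fixes the other exceptional divisor $E_{3-i}$ in $\Pic(Z)$ (the block-diagonal shape of your matrices, which you assume ``up to flop bookkeeping''). You would also still need to actually compute the $2\times2$ block $\bigl(\begin{smallmatrix}11&20\\-6&-11\end{smallmatrix}\bigr)$; the paper does this by intersection theory on $X_i$, using $(-K_{X_i})^2=(12-d)\ell-(22-2d)f$ and the fact that $\tau_{i*}$ is an involution fixing $-K_{X_i}$, rather than by reading a degree off the birational map of $Y$.
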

\renewcommand\thelemm{\thesection.\arabic{lemm}}

Let us end this introduction by discussing the similarities and differences with the  papers \cite{BL} and \cite{ACM, CM}. 
In \cite{ACM} the authors show the existence of weak-Fano threefolds obtained as blow-ups of a smooth curve on a smooth threefold $Y$, where $Y$ is either a quadric in $\p^4$, a complete intersection of two quadrics in $\p^5$, or the Fano threefold $V_5$ of degree 5 in~$\p^6$. 
The construction relies on the fact that for each curve $C$ of genus~$g$ and degree~$d$ candidate to be blown-up, there exists a smooth K3 surface $S \subset Y$ containing a curve of type $(g,d)$, such that the Picard group of $S$ is generated by the curve and a hyperplane section. 
This allows to control the geometry of potential bad curves (curves intersecting many times $C$ with respect to their degree).
In fact one can apply a similar strategy to the case of a cubic threefold: This is done for the type $(0,5)$ and $(2,6)$ in \cite[Proposition~2.10]{CM}. 
Precisely J. Cutrone and N. Marshburn show that there exist very special curves of type $(0,5)$ or $(2,6)$ that are contained in a particular K3 surface, which is itself contained in a particular smooth cubic threefold, such that the blow-up is weak-Fano.
By contrast, we show that for any cubic threefold and any curve of type $(0,5)$ or $(2,6)$ with no $3$-secant line in the cubic (which is an open and dense condition), the blow-up is weak-Fano. 
In short, existence was already known, but now we obtain genericity (and a different proof for existence).

In many respect we follow a similar line of argument as in \cite{BL}, but with some notable simplification (it would be possible to implement these simplifications to the case of blow-ups of $\p^3$ as well).
In particular in \cite[Proposition 2.8]{BL} we proved and used the fact that if the blow-up of a curve $C \subset \p^3$ gives rises to a weak-Fano threefold, then it is contained in a smooth quartic.
The analogous statement in the context of a cubic threefold~$Y$ would be that such a curve $C$ is contained in a smooth hyperquadric section.
We avoid such a statement (true, but quite delicate to prove), and we prove instead by more elementary arguments that curves of small degree are always contained in such a smooth hyperquadric section, without assuming a priori that the blow-up is weak-Fano.\\

The paper is organised as follows.

In Section~\ref{Sec:NecessarySection}, we develop tools to show that if the blow-up of a curve in a smooth threefold is weak-Fano, the type of the curve is in $\Lp\cup \Lq$, and belongs to a hyperplane section if and only if it is in $\Lp$ (see Proposition~\ref{Prop:NecessaryConditionsPropre}). 
This is done first by recalling basic facts in $\S\ref{Sec:Preliminaries}$, then describing curves of small degree in cubic threefolds in $\S\ref{Sec:small_degree}$, and showing that the curves that yield weak-Fano threefolds are of small degree in $\S\ref{Sec:WeakFanoSmalldegree}$. The case of curves in hyperplane section is studied in $\S\ref{Sec:hyperplane}$, and the proof of Proposition~\ref{Prop:NecessaryConditionsPropre} is then achieved in $\S\ref{Sec:SummaryNec}$.

In Section~\ref{Sec:ProofofTheoremA}, we do the converse. We take a curve of type in $\Lp\cup \Lq$, assume that it is not in a hyperplane section if $(g,d)\in\Lq$, and moreover that it has no $3$-secant line if it is of type~$(0,5)$. 
We then show that the blow-up is weak-Fano, by proving first that the curve lies inside a nice surface (in most cases, a  Del Pezzo surface of degree $4$), and then that it is contained in a smooth hyperquadric section ($\S\ref{Sec:FanoCase}$ and $\S\ref{Sec:smooth_sections}$). The fact that the open subset corresponding to curves without any $3$-secant line is not empty is established in $\S\ref{Sec:hyperquadric}$, by constructing examples in special singular hyperquadric sections. This leads to the proof of Theorem~\ref{Thm:WeakFano} ($\S\ref{Sec:SummaryProof}$), that we complement with a description of the Sarkisov links associated with the blow-ups of such curves ($\S\ref{Sec:links}$).

Section~\ref{Sec:Examples} is devoted to the proof of Propositions~\ref{Prop:mainB} and~\ref{Prop:mainC}. 
After recalling some basic notions in \S\ref{Sec:BasicOfLastSec}, we prove the existence of birational transformations of smooth cubic threefolds of arbitrary genus in $\S\ref{Sec:Genus}$, and finish our article with $\S\ref{Sec:Pseudoauto}$, which describes the construction of pseudo-automorphisms associated with the curves of type $(2,6)$.\\

The authors thank Yuri Prokhorov for interesting discussions during the preparation of the article.

\section{Curves leading to weak-Fano threefolds belong to the lists}\label{Sec:NecessarySection}

In the next sections (\S\ref{Sec:Preliminaries}--\S\ref{Sec:hyperplane}), we give conditions on a smooth curve $C\subset Y$ in a smooth cubic threefold that are necessary for the blow-up of~$Y$ along~$C$ to be weak-Fano. 
These will be summarised in $\S\ref{Sec:SummaryNec}$: 
We prove in Proposition~\ref{Prop:NecessaryConditionsPropre} that $(g,d)\in \Lp$ if $C$ is contained in a hyperplane section,
and that $(g,d)\in \Lq$ if $C$ not contained in a hyperplane section.
\subsection{Preliminaries}\label{Sec:Preliminaries}
This section is devoted to reminders of some results that we shall need in the sequel.

Let $Z$ be a smooth projective variety (we have in mind $Z = \p^n$, or $Z \subset \p^4$ a cubic hypersurface), and let $C \subset Z$ be a smooth curve.
Let $X \to Z$ be the blow-up of $C$ in $Z$, with exceptional divisor $E$.
We say that another (irreducible) curve $\Gamma \subset Z$ is $n$-secant to $C$ if the strict transform $\tilde \Gamma$ of $\Gamma$ in $X$ satisfies $E \cdot \tilde \Gamma \ge n$. 
The multiplicity of $\Gamma$ as a $n$-secant curve to $C$ is the binomial coefficient $\left( \begin{smallmatrix}
E \cdot \tilde{\Gamma} \\ n
\end{smallmatrix} \right)$.
The following is a basic observation.

\begin{lemm}\label{Lemm:fano1}
Let $C \subset Y \subset \p^4$ be a smooth curve in a smooth cubic threefold, and consider $\pi\colon X \to Y$ the blow-up of $C$.
If $\Gamma$ is a curve of degree $n$ which is $m$-secant to $C$, then 
$$-K_X \cdot \tilde \Gamma \le 2n -m.$$
\end{lemm}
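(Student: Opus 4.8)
The plan is to compute $-K_X \cdot \tilde\Gamma$ directly from the standard formula for the canonical class of a blow-up along a smooth curve. Write $\pi\colon X \to Y$ for the blow-up of $C$ with exceptional divisor $E$. Since $Y$ is a smooth cubic threefold, $-K_Y = \mathcal{O}_Y(2)$ (adjunction: $K_Y = (K_{\p^4} + Y)|_Y = \mathcal{O}_Y(-5+3) = \mathcal{O}_Y(-2)$). The blow-up formula for a smooth curve of codimension $2$ in a smooth threefold gives $K_X = \pi^* K_Y + E$, hence $-K_X = \pi^*(-K_Y) - E = \pi^*\mathcal{O}_Y(2) - E$.

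Now I would intersect with the strict transform $\tilde\Gamma$. By the projection formula, $\pi^*\mathcal{O}_Y(2)\cdot \tilde\Gamma = 2\cdot(\mathcal{O}_Y(1)\cdot \Gamma) = 2n$, since $\Gamma$ has degree $n$ in $\p^4$ (its image under $\pi$ is $\Gamma$ itself, with the same degree). The term $E\cdot\tilde\Gamma$ is, essentially by definition of $m$-secancy, the quantity controlling how $\Gamma$ meets $C$: the hypothesis that $\Gamma$ is $m$-secant to $C$ means precisely $E\cdot\tilde\Gamma \ge m$. Putting these together,
\[
-K_X\cdot\tilde\Gamma = \pi^*\mathcal{O}_Y(2)\cdot\tilde\Gamma - E\cdot\tilde\Gamma = 2n - E\cdot\tilde\Gamma \le 2n - m,
\]
which is the claimed inequality.

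The only subtlety — and the single point deserving a word of care — is the case $\Gamma \subset C$, where the ``strict transform'' is not the naive closure of $\pi^{-1}(\Gamma \setminus C)$; but in that situation $\Gamma = C$ (as $C$ is irreducible of dimension $1$), and one checks the inequality separately, or one simply notes that the lemma is invoked in the sequel only for curves $\Gamma \ne C$, so that $\tilde\Gamma$ is genuinely the strict transform and $E\cdot\tilde\Gamma$ records the scheme-theoretic length of $\Gamma\cap C$. Thus there is no real obstacle here: the statement is a one-line consequence of the adjunction/blow-up formulas together with the definition of $m$-secancy, and the ``hard part'' is merely to state precisely enough what $E\cdot\tilde\Gamma$ means so that the inequality $E\cdot\tilde\Gamma\ge m$ is a tautology rather than something requiring proof.
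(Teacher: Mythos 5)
Your proof is correct and is essentially the paper's own argument: the ramification formula $K_X = \pi^*K_Y + E$ together with $-K_Y \sim 2H$ and the definition of $m$-secancy as $E\cdot\tilde\Gamma \ge m$. The aside about $\Gamma \subset C$ is moot since the paper's definition of $n$-secancy already requires $\Gamma$ to be a curve other than $C$.
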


\begin{proof}
The result follows from the ramification formula $K_X = \pi^* K_Y + E$, together with $-K_Y \sim 2H$ where $H$ is a hyperplane section. 
\end{proof}

Observe in particular that if $C$ admits a 2-secant line (resp.~a 3-secant line), then the blow-up $X$ will not be Fano (resp.~weak-Fano).
We mention now two results from \cite{HRS} about $n$-secant lines. 
The first one is analogue to the classical formula of Cayley that counts the number of $4$-secant lines to a curve in $\p^3$. 
The second one is about rational quintic curves, which will prove to be one of the most delicate cases in our study.

\begin{lemm}[{\cite[Lemma 4.2]{HRS}}]\label{Lemm:HRS4.2}
Let $C \subset Y \subset \p^4$ be a smooth curve of genus~$g$ and degree~$d$ in a smooth cubic threefold.
Let 
$$N = \frac{5d(d-3)}2 +6-6g. $$ 
If $C$ does not admit infinitely many $2$-secant lines in $Y$, and $N \ge 0$, then $N$ is the number of $2$-secant lines to $C$ in $Y$, counted with multiplicity.  
\end{lemm}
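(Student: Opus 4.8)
The statement to prove is Lemma~\ref{Lemm:HRS4.2}: counting $2$-secant lines to a smooth curve $C$ of genus $g$, degree $d$ inside a smooth cubic threefold $Y \subset \p^4$, with the count being $N = \tfrac{5d(d-3)}{2} + 6 - 6g$. This is cited from \cite[Lemma 4.2]{HRS}, so the author presumably just quotes it; but let me sketch how one would actually derive it. The natural framework is the geometry of secant lines via the Chow ring of the Grassmannian $\mathbb{G}(1,4)$ of lines in $\p^4$. The plan is to express the locus of $2$-secant lines of $C$ as an intersection-theoretic class on $\mathbb{G}(1,4)$, then intersect with the class of lines contained in $Y$, and compute the resulting number.

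**Key steps.** First, recall the classical "secant variety" computation in $\p^4$: the variety of lines in $\p^4$ that are $2$-secant (i.e. properly bisecant, or tangent) to $C$ has a well-known class in $A^\bullet(\mathbb{G}(1,4))$, computable from the degree and genus of $C$ by the double-point formula for the map $C \to \p^4$ composed with projection, or equivalently via the formula for the class of the secant locus. Concretely, one uses that the $k$-dimensional family of secant lines has a class that can be written in terms of the Schubert cycles $\sigma_a, \sigma_{a,b}$ with coefficients polynomial in $d$ and $g$; these coefficients come from Chern class computations on the symmetric square $C^{(2)}$ mapping to $\mathbb{G}(1,4)$. Second, recall that the lines contained in the smooth cubic threefold $Y$ form the Fano surface $F(Y) \subset \mathbb{G}(1,4)$, a smooth surface of known cohomology class $[F(Y)] \in A^2(\mathbb{G}(1,4))$ (this is classical Clemens--Griffiths material: $[F(Y)] = \sigma_{2,2} + \sigma_3 \cdot(\text{something})$, with the precise expression giving, e.g., the degree $45$ of $F(Y)$ in the Plücker embedding). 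Third, the $2$-secant lines to $C$ lying in $Y$ are exactly the intersection of these two loci, so $N$ is the intersection number $[\text{bisecant locus}] \cdot [F(Y)]$ in $\mathbb{G}(1,4)$ — a number expressible as a polynomial in $d$ and $g$, which one then checks equals $\tfrac{5d(d-3)}{2} + 6 - 6g$. Fourth, to conclude that this intersection number genuinely counts the bisecant lines with multiplicity (rather than being contaminated by an excess component), one invokes the hypothesis that $C$ has only finitely many $2$-secant lines in $Y$: this guarantees the intersection is proper (zero-dimensional) and hence the intersection number equals the length of the scheme-theoretic intersection, i.e. the count with multiplicity.

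**The main obstacle.** The delicate point is the bookkeeping of the bisecant-line class on $\mathbb{G}(1,4)$: unlike the $\p^3$ case (where bisecant lines of a space curve form a surface and the classical formula is clean), in $\p^4$ the bisecant lines sweep out a $3$-fold family, so one needs the correct codimension-$2$ Schubert expansion, and the tangent/stationary bisecants (contributing boundary terms and the "$+6$" and genus corrections) must be accounted for carefully — this is where the $6 - 6g$ term enters, via the Euler characteristic $2 - 2g$ of $C$ or via a ramification/Plücker-type correction. Getting the coefficients of $\sigma_3$ versus $\sigma_{2,1}$ right in both the secant class and in $[F(Y)]$, and correctly handling the self-intersection-type contributions of tangent lines, is the part that requires genuine care; everything else is formal manipulation in the Chow ring. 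Since the paper cites \cite{HRS} for this, I would simply reproduce their statement and refer to their proof rather than redo the Grassmannian computation, noting only that the hypothesis "$C$ does not admit infinitely many $2$-secant lines" is precisely what makes the enumerative count valid.
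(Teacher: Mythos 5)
The paper gives no proof of this lemma: it is quoted verbatim from \cite[Lemma 4.2]{HRS}, so your final fallback --- reproduce the statement and refer to \cite{HRS} --- is exactly what the authors do, and as a citation your proposal is fine.

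However, the derivation you sketch is not a correct route to the formula, and the flaw is a dimension count rather than ``delicate bookkeeping''. The family of $2$-secant lines to $C$ in $\p^4$ is the image of the symmetric square $C^{(2)}$ in $\mathbb{G}(1,4)$, hence a \emph{surface} (you conflate it with the secant variety swept out in $\p^4$, which is indeed a threefold). The Fano surface $F(Y)$ of lines in $Y$ is also a surface, and $\dim \mathbb{G}(1,4)=6$, so the two loci have expected intersection dimension $2+2-6=-2$: the product of their classes in the Chow ring lives in $A^8(\mathbb{G}(1,4))=0$ and cannot equal $N$. Moreover, the hypothesis that $C$ has only finitely many $2$-secant lines in $Y$ cannot make this intersection ``proper'' --- a nonempty zero-dimensional intersection of two surfaces in a sixfold is always an excess intersection, never a transverse one. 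The enumeration has to be carried out intrinsically on $F(Y)$: the lines of $Y$ meeting $C$ form a curve $\Delta_C\subset F(Y)$ (a genuinely proper intersection, since ``meeting $C$'' is codimension $1$ on $F(Y)$), the incidence curve $\tilde I=\{(p,\ell): p\in C\cap\ell,\ \ell\in F(Y)\}$ is a degree-$6$ cover of $C$ mapping birationally onto $\Delta_C$, and the $2$-secant lines are the multiple points of the map $\tilde I\to F(Y)$; one then computes $N$ as a difference of arithmetic and geometric genera (equivalently by a double-point formula for a map from a curve to a surface), using the class of $\Delta_C$ in $H^2(F(Y))$, the canonical class of $F(Y)$, and Riemann--Hurwitz for $\tilde I\to C$ --- this is where the term $6-6g=3(2-2g)$ enters. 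This is the shape of the argument in \cite{HRS}, and it also explains why the multiplicity of an $m$-secant line is the binomial coefficient $\binom{m}{2}$, namely the number of local branches of $\tilde I$ meeting pairwise over that point of $F(Y)$.
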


We should mention that in the case of a line the previous lemma gives $N = 1$, which does not fit well with our explicit definition of $n$-secant and multiplicity. 
One can take it as a convention that in this case the line should be considered as a $2$-secant line to itself with multiplicity 1.

\begin{lemm}[{\cite[Corollary 9.3]{HRS}}]\label{Lemm:HRS9.3}
Let $C \subset \p^4$ be a smooth rational quintic curve, and assume that $C$ is not contained in a hyperplane.
Then there exists a unique $3$-secant line to $C$ in $\p^4$.
\end{lemm}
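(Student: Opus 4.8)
The plan is to realise $C$ as a curve on a cubic scroll surface, to read off a $3$-secant line from the geometry of that scroll, and then to show that \emph{any} $3$-secant line of $C$ must lie in the base locus of the quadrics through $C$, which pins it down.

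First I would use the classical fact that a smooth non-degenerate rational curve of degree $5$ in $\p^4$ is contained in a rational normal scroll $S$ of degree $3$; since $C$ is smooth, $S$ is necessarily the smooth scroll $\mathbb{F}_1$ embedded by the linear system $|C_0+2f|$, where $C_0$ is the section with $C_0^2=-1$ and $f$ is a fibre of the ruling. (The existence can be seen through the realisation of $C$ as a projection of the rational normal quintic $C_5\subset\p^5$ from a point lying on no secant or tangent line of $C_5$; a cone over a twisted cubic is excluded here because a smooth curve of degree $5$ on such a cone would be forced to pass through the vertex.) Such a scroll $S$ is cut out scheme-theoretically by the $2\times 2$ minors of a $2\times 3$ matrix of linear forms, so $\operatorname{Bs}|I_S(2)|=S$, $h^0(\p^4,I_S(2))=3$, and $S$ is arithmetically Cohen--Macaulay.

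Next I would locate $C$ inside $S$. Writing $C\sim aC_0+bf$ in $\Pic S$, the relations $\deg C=C\cdot(C_0+2f)=a+b=5$ and $2g(C)-2=C\cdot(C+K_S)=-2$, with $K_S=-2C_0-3f$, give $a=1$, $b=4$, so that $C\sim C_0+4f$. The lines contained in $S$ are exactly the directrix $C_0$ (of degree $C_0\cdot(C_0+2f)=1$) and the fibres $f$; since $C_0\cdot C=C_0\cdot(C_0+4f)=3$ while $f\cdot C=1$, the directrix $C_0$ is a $3$-secant line to $C$ and the fibres are simple secants. This produces the required trisecant.

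It remains to prove uniqueness. If $\ell$ is any $3$-secant line to $C$, then every quadric through $C$ meets $\ell$ in a subscheme of length at least $3$, hence contains $\ell$, so $\ell\subseteq\operatorname{Bs}|I_C(2)|$. Since $h^0(\p^4,I_C(2))\ge h^0(\p^4,\mathcal O(2))-h^0(\mathcal O_C(2))=15-11=4>3=h^0(\p^4,I_S(2))$, there is a quadric $Q$ through $C$ not containing $S$; it cuts out on $S$ a divisor linearly equivalent to $2(C_0+2f)$ and containing $C$, hence equal to $C+C_0$ (because $2(C_0+2f)-C\sim C_0$ and $|C_0|=\{C_0\}$). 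Enlarging a linear system only shrinks its base locus, so $\operatorname{Bs}|I_C(2)|\subseteq\operatorname{Bs}|I_S(2)|\cap Q=S\cap Q=C\cup C_0$, and since $\ell$ is an irreducible line not contained in $C$ we conclude $\ell=C_0$. This establishes both the existence and the uniqueness of the $3$-secant line. The step I expect to require the most care is the existence of the cubic scroll through $C$ (in particular checking it covers every smooth non-degenerate rational quintic, not merely the generic one); as an alternative that bypasses it, one may invoke the classical enumerative formula for the number of trisecant lines to a curve of degree $d$ and genus $g$ in $\p^4$, which equals $1$ for $(d,g)=(5,0)$, together with the observation that a smooth non-degenerate rational quintic has only finitely many trisecant lines.
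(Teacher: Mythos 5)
First, note that the paper does not prove this lemma at all: it is quoted verbatim from \cite[Corollary~9.3]{HRS}, so any genuine proof is necessarily ``a different route''. Your overall architecture (cubic scroll $\Rightarrow$ the directrix $C_0$ is a trisecant; uniqueness via $\operatorname{Bs}\lvert I_C(2)\rvert\subseteq S\cap Q=C\cup C_0$) is sound, and everything downstream of the scroll is correct: the class computation $C\sim C_0+4f$, the identification of the lines of $S$, the counts $h^0(I_C(2))\ge 4>3=h^0(I_S(2))$, and the residuation $2(C_0+2f)-C\sim C_0$ with $\lvert C_0\rvert=\{C_0\}$ are all right, and the observation that a trisecant must lie in every quadric through $C$ is exactly the right uniqueness mechanism.

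The genuine gap is the step you yourself flag: the existence of the cubic scroll through an \emph{arbitrary} smooth non-degenerate rational quintic, and your parenthetical justification via projection from $\p^5$ does not work as stated. The obvious scrolls through the rational normal quintic $C_5\subset\p^5$ are the quartic scrolls $S(2,2)$ swept out by the secant lines of the divisors of a pencil $g^1_2$; but every such scroll is contained in $\mathrm{Sec}(C_5)$, whereas the centre of projection $p$ must avoid $\mathrm{Sec}(C_5)$ for $C$ to be smooth. Hence none of these scrolls projects to a cubic scroll through $C$ (they all project to quartic scrolls), and one must instead exhibit a degree-$4$ scroll of type $S(1,3)$ through $C_5$ passing through $p$ --- these have unisecant rulings and a $2$-secant line directrix, and showing that they sweep out all of $\p^5\smallsetminus\mathrm{Sec}(C_5)$ is a real argument (or one must invoke the structure theory of curves of almost minimal degree). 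Two smaller points: a smooth curve through the vertex of the cone over a twisted cubic is not automatically impossible (the rulings are smooth through the vertex), so excluding the cone requires the divisor-class computation on $\mathbb{F}_3$ showing that $(g,d)=(0,5)$ forces intersection number $2$ with the exceptional section; and your fallback via the trisecant formula $\binom{d-2}{3}-g(d-4)=1$ is legitimate but still requires a proof that the trisecants are finite in number --- which, pleasantly, your own base-locus argument supplies, since an irreducible surface swept out by trisecants would be a non-degenerate surface $T\supseteq C$ with $h^0(I_T(2))\ge h^0(I_C(2))\ge 4$, and no surface of degree $3$ or $4$ in $\p^4$ satisfies this. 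Filling either route would make the proof complete.
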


The following lemma is classical (see \cite[Lemma 2.2.14]{IP}), and gives the first basic inequality on the pair $(g,d)$. 
It is analogue to the fact that the blow-up of at least $9$ points in $\p^2$ is never weak-Fano.

\begin{lemm}
\label{Lemm:K^3}
Let $C \subset Y$ be a smooth curve of genus~$g$ in a smooth threefold, and let $\pi\colon X\to Y$ be the blow-up of $C$. Denote by $E$ the exceptional divisor.
Then
\begin{align*}
K_X^2\cdot E &= -K_Y\cdot C + 2 - 2g;\\
(-K_X)^3 &= (-K_Y)^3 +2K_Y\cdot C - 2 + 2g.
\end{align*}
In particular, if $Y \subset \p^4$ is a smooth cubic and $C\subset Y$ is a smooth curve of genus~$g$ and degree~$d$, then
\begin{align*}
K_X^2\cdot E &= 2+2d-2g;\\
(-K_X)^3 &=22-4d+2g;
\end{align*}
and $(-K_X)^3 > 0 \Longleftrightarrow 2(d - 5) \le g $. 
\end{lemm}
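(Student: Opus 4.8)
The plan is to derive both identities purely from the relation $K_X=\pi^{*}K_Y+E$ together with the standard intersection theory of the blow-up of a smooth curve in a smooth threefold (see \cite[Lemma 2.2.14]{IP}). Writing $\pi|_E\colon E\to C$ for the induced $\p^1$-bundle, one has, for any divisor $A$ on $Y$, the identities $(\pi^{*}A)^{2}\cdot E=0$ and $\pi^{*}A\cdot E^{2}=-A\cdot C$, while $E^{3}=-\deg N_{C/Y}$. The degree of the normal bundle is then read off from the adjunction formula $2g-2=\deg K_C=K_Y\cdot C+\deg N_{C/Y}$, which gives $E^{3}=K_Y\cdot C-2g+2$.

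With these in hand it is just a matter of expanding. For the first formula, $K_X^{2}\cdot E=(\pi^{*}K_Y+E)^{2}\cdot E=2\,\pi^{*}K_Y\cdot E^{2}+E^{3}=-2K_Y\cdot C+(K_Y\cdot C-2g+2)=-K_Y\cdot C+2-2g$. For the second, $K_X^{3}=(\pi^{*}K_Y+E)^{3}=(\pi^{*}K_Y)^{3}+3\,\pi^{*}K_Y\cdot E^{2}+E^{3}=K_Y^{3}-3K_Y\cdot C+(K_Y\cdot C-2g+2)=K_Y^{3}-2K_Y\cdot C-2g+2$, hence $(-K_X)^{3}=(-K_Y)^{3}+2K_Y\cdot C-2+2g$.

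For the ``in particular'' part I would specialise to the cubic: adjunction on $Y\subset\p^4$ gives $-K_Y\sim 2H$ with $H$ a hyperplane section, so $-K_Y\cdot C=2H\cdot C=2d$, $K_Y\cdot C=-2d$, and $(-K_Y)^{3}=8H^{3}=24$. Substituting yields $K_X^{2}\cdot E=2+2d-2g$ and $(-K_X)^{3}=22-4d+2g$. Finally, since $22-4d+2g$ is an even integer, the condition $(-K_X)^{3}>0$ is equivalent to $22-4d+2g\ge 2$, i.e.\ to $g\ge 2d-10=2(d-5)$, which is the asserted equivalence.

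I do not expect any genuine obstacle: the statement is a bookkeeping computation. The only points requiring a little care are fixing the signs in $E^{3}=-\deg N_{C/Y}$ and in the adjunction step, and observing the parity of $(-K_X)^{3}$, which is what converts the strict inequality $(-K_X)^{3}>0$ into the non-strict bound $2(d-5)\le g$.
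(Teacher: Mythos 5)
Your proof is correct, and it is exactly the standard blow-up computation that the paper does not spell out: the lemma is stated there without proof, with a reference to \cite[Lemma 2.2.14]{IP}, and your derivation (expanding $K_X=\pi^*K_Y+E$ using $(\pi^*A)^2\cdot E=0$, $\pi^*A\cdot E^2=-A\cdot C$, $E^3=-\deg N_{C/Y}=K_Y\cdot C-2g+2$ from adjunction) is precisely the argument behind that citation. The specialisation to the cubic and the parity observation converting $(-K_X)^3>0$ into $2(d-5)\le g$ are also handled correctly.
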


We shall use the following direct consequence of the Riemann-Roch formula, in order to decide when the curve is contained in a hyperplane or a hyperquadric.

\begin{lemm} \label{Lemm:hypersurfaces}
Let $C \subset \p^4$ be a smooth curve of genus~$g$ and degree~$d$.
\begin{enumerate}
\item \label{item:plane1}
If $d > 2g-2$, then the projective dimension of the linear system of hyperplanes that contain $C$ is at least equal to $3-d +g$.
\item \label{item:plane2}
If $2g-4\ge d \ge 2g-2$ and $g\ge 2$, then the projective dimension of the linear system of hyperplanes that contain $C$ is at least equal to $2-d +g$.
\item \label{item:planelist}
In particular, if $C\subset \p^4$ is a smooth curve of genus~$g$ and degree~$d$, with $(g,d) \in \Lp \cup \{\, (2,5), (3,6)\, \}$ 
then $C$ is contained in a hyperplane section.
\item \label{item:quadric}
If $d > g-1$, then the projective dimension of the linear system of quadrics that contain $C$ is at least equal to $13-2d +g$.
In particular if $(g,d) \in \Lq$, then $C$ is contained in a pencil of hyperquadric sections.
\end{enumerate}
\end{lemm}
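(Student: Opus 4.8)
The plan is to combine the ideal-sheaf sequence of $C$ in $\mathbb{P}^4$ with Riemann--Roch on the curve. For an integer $k\ge 1$, twisting $0\to\mathcal{I}_C\to\mathcal{O}_{\mathbb{P}^4}\to\mathcal{O}_C\to 0$ by $\mathcal{O}(k)$ and taking global sections shows that the linear system of degree-$k$ hypersurfaces through $C$ has projective dimension
\[
h^0(\mathbb{P}^4,\mathcal{I}_C(k))-1 \geq h^0(\mathbb{P}^4,\mathcal{O}(k))-h^0(C,\mathcal{O}_C(k))-1,
\]
where $h^0(\mathbb{P}^4,\mathcal{O}(1))=5$ and $h^0(\mathbb{P}^4,\mathcal{O}(2))=15$. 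By Riemann--Roch and Serre duality on $C$ one has $h^0(C,\mathcal{O}_C(k))=kd+1-g+h^0(C,K_C\otimes\mathcal{O}_C(-k))$, and $K_C\otimes\mathcal{O}_C(-k)$ has degree $2g-2-kd$. So the whole statement reduces to bounding $h^0(C,K_C\otimes\mathcal{O}_C(-k))$ for $k=1,2$.

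For part~(1) I would observe that if $d>2g-2$ then $K_C\otimes\mathcal{O}_C(-1)$ has negative degree, hence $h^0(C,K_C\otimes\mathcal{O}_C(-1))=0$ and $h^0(\mathbb{P}^4,\mathcal{I}_C(1))\geq 5-(d+1-g)=4-d+g$, which is the claimed bound. Part~(4) is the same argument with $k=2$: the hypothesis $d>g-1$ is exactly $2d>2g-2$, so $K_C\otimes\mathcal{O}_C(-2)$ has negative degree and $h^0(\mathbb{P}^4,\mathcal{I}_C(2))\geq 15-(2d+1-g)=14-2d+g$. For the ``in particular'' in part~(4) I would add that two distinct quadrics through $C$ cut out distinct divisors on the cubic $Y$ --- their difference has degree $2$, too small to lie in the ideal of $Y$, which is generated in degree $3$ --- so a positive-dimensional linear system of quadrics through $C$ yields a genuine pencil of hyperquadric sections of $Y$, and one checks $13-2d+g\geq 1$ (in fact $\geq 3$) for every $(g,d)\in\Lq$.

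For part~(2) the line bundle $\mathcal{O}_C(1)$ can be special, and the crux is to show $h^0(C,K_C\otimes\mathcal{O}_C(-1))\leq 1$; here $K_C\otimes\mathcal{O}_C(-1)$ has the small degree $2g-2-d$ coming from the hypothesis, and I may assume $C$ non-degenerate, since otherwise the conclusion of part~(3) is automatic. If that degree is $\leq 1$ then $h^0\leq 1$ is immediate; if it equals $2$, then $h^0=2$ would force (using $g\geq 2$) $C$ to be hyperelliptic with $K_C\otimes\mathcal{O}_C(-1)$ equal to its $g^1_2$, so that $\mathcal{O}_C(1)=(g-2)\,g^1_2$ would be composite with the hyperelliptic pencil --- impossible for an embedded $C\subset\mathbb{P}^4$, whose hyperplane bundle is very ample. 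This gives $h^0(\mathbb{P}^4,\mathcal{I}_C(1))\geq 5-(d+2-g)=3-d+g$. Finally part~(3) follows by substituting each $(g,d)\in\Lp\cup\{(2,5),(3,6)\}$ into part~(1) --- except $(4,6)$, where $d=2g-2$ forces one to use part~(2) with $K_C\otimes\mathcal{O}_C(-1)$ of degree $0$ --- and checking that the resulting lower bound for the projective dimension is $\geq 0$ in each case, so that some hyperplane contains $C$.

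I expect the only step that is not pure bookkeeping to be the control of $h^0(C,K_C\otimes\mathcal{O}_C(-1))$ in the near-canonical range of part~(2): there $\mathcal{O}_C(1)$ is special and one must combine Clifford's inequality with the observation that a curve embedded in $\mathbb{P}^4$ cannot have its hyperplane bundle composite with a pencil. Everything else is arithmetic with Riemann--Roch.
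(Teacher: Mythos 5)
Your proof is correct and follows essentially the same route as the paper: Riemann--Roch plus a dimension count for parts (1) and (4), and for part (2) the exclusion of $\ell(K_C-D)\ge 2$ via the hyperelliptic case, where $D\sim(g-2)\,g^1_2$ contradicts very ampleness of the hyperplane bundle. Your extra remarks --- the explicit reduction of part (3) to part (2) for the type $(4,6)$, and the observation that distinct quadrics through $C$ cut distinct hyperquadric sections of $Y$ --- are correct refinements of details the paper leaves implicit.
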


\begin{proof}
Let $m\in \{1,2\}$ and let $D$ be the divisor of a hyperplane section restricted to $C$.
By the Riemann-Roch formula
$$\ell(mD) - \ell(K_C-mD) = md+1-g.$$
In cases~\ref{item:plane1} and~\ref{item:quadric}, by assumption $md > 2g-2$ hence $\ell(K_C-mD) = 0$ and $\ell(D) = md -g+1$.
But the vectorial dimensions of the systems of hyperplanes and quadrics in $\p^4$ are respectively $5$ and $15$, hence the vectorial dimensions of the systems of hyperplanes or quadrics containing $C$ are respectively $4 - d + g$ and $14 - 2d + g$.

In case~\ref{item:plane2}, if $\ell(K_C-D)\le 1$, then $\ell(D)\le d+2-g$.  
Since the vectorial dimension of the system of hyperplanes is $5$, the vectorial dimension of the system of hyperplanes containing $C$ is then at least $3-d+g$.
It remains to show that $\ell(K_C-D)\ge 2$ is not possible. The degree of the divisor $K_C-D$ is equal to $2g-2-d$, which belongs to $\{0,1,2\}$ by hypothesis. Since $g\ge 2$, the only possibility is then that $\deg(K_C-D)=\ell(K_C-D)=2$ and that $\lvert K_C-D\rvert$ induces a double covering $C\to \p^1$; in particular, $C$ is hyperelliptic. The divisor that yields the double covering is the unique $g^1_2$ and $K_C$ is linearly equivalent to $(g-1)\cdot g^1_2$ \cite[Proposition 5.3]{Har}, so $D\sim (g-2)\cdot g_1^2$. This is impossible because $D$ is very ample and $K_C$ is not very ample \cite[Proposition 5.2]{Har}.
\end{proof}

\subsection{Curves of small degree in $\mathbb{P}^4$} \label{Sec:small_degree}
In this section, we give some conditions on the genus and degree of a curve in $\mathbb{P}^4$, and then apply this to the case of curves of small degree that lie in smooth cubic threefolds.\\

We first recall the classical Castelnuovo's bound on non-degenerate smooth curves of $\p^n$ (\cite{Cas}, see also \cite[p.252]{GH}):

\begin{prop}\label{Prop:CastelnuovoBound}
Let $C\subset \p^n$ be a smooth curve of genus~$g$ and degree~$d$ which is not 
contained in a hyperplane, and write the Euclidean division 
$$d-1=(n-1)m+\eps,$$ where $m\ge 0$ and $\eps\in \{0,\dots,n-1\}$. Then 
$$g\le (n-1)m(m-1)/2+m\eps.$$
\end{prop}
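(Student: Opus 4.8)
The plan is to give the classical projection-and-Hilbert-function argument. First I would project $C$ from a general point onto $\p^{n-1}$, and then iterate: projecting from a general point of $\p^n$ (not on $C$) one obtains a birational map onto a curve $C'\subset\p^{n-1}$ of the same degree $d$, and a general such projection is an isomorphism onto its image away from finitely many nodes. More to the point, I would work with a general hyperplane section: let $\Gamma=C\cap H$ be the set of $d$ points cut out on $C$ by a general hyperplane $H\cong\p^{n-1}$. The key geometric input, the \emph{general position lemma}, is that these $d$ points are in general position in $\p^{n-1}$, meaning any $n-1$ of them are linearly independent; I would cite this (it is due to Castelnuovo; see \cite[p.~109]{ACGH} or \cite[p.~252]{GH}) rather than reprove it.

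Next I would analyze the Hilbert function of $\Gamma$. Write $h_\Gamma(\ell)=\dim\big(\text{image of }H^0(\p^{n-1},\mathcal O(\ell))\to H^0(\Gamma,\mathcal O_\Gamma(\ell))\big)$, the number of conditions imposed by $\Gamma$ on hypersurfaces of degree $\ell$. The general position property forces $h_\Gamma(1)=\min(d,n)=n$ (for $d\ge n$, which we may assume, the degenerate small-degree cases being trivial), and more generally the combinatorial heart of the argument is the inequality
\[
h_\Gamma(\ell)-h_\Gamma(\ell-1)\ \ge\ \min\big(d-h_\Gamma(\ell-1),\ (n-1)\ell+1-\big(h_\Gamma(\ell-1)-\text{lower terms}\big)\big),
\]
which after the standard bookkeeping becomes: the first differences of $h_\Gamma$ are $1,\ (n-1),\ (n-1),\dots$ until the value $d$ is reached, i.e.
\[
h_\Gamma(\ell)\ \ge\ \min\big(d,\ (n-1)\ell+1\big).
\]
The proof of this is the general position lemma applied inductively: given $\ell$ general points among the $d$, one builds a hypersurface of degree $\ell$ through a prescribed subset by taking unions of hyperplanes, using that any $n-1$ of the points are independent. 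This step — establishing that the first differences drop off exactly as $1,(n-1),(n-1),\dots$ — is the main obstacle, and it is where all the real content lies; everything else is formal.

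Finally I would convert this into the genus bound. From the exact sequence relating $C$, a hyperplane section, and the ideal sheaves, one gets that for $\ell\gg0$ the failure of $C$ to be projectively normal is controlled by $\sum_{\ell\ge1}\big(d-h_\Gamma(\ell)\big)$, and comparing with Riemann--Roch on $C$ yields
\[
g\ \le\ \sum_{\ell\ge1}\big(d-h_\Gamma(\ell)\big)_{+}.
\]
Plugging in $h_\Gamma(\ell)\ge\min(d,(n-1)\ell+1)$ and summing: with $d-1=(n-1)m+\eps$, the terms $d-h_\Gamma(\ell)$ for $\ell=1,\dots,m$ are at most $d-1-(n-1)(\ell-1)\cdot\frac{?}{}$ — more precisely $d-\big((n-1)\ell+1\big)=(n-1)(m-\ell)+\eps$ for $\ell\le m$, and $0$ for $\ell>m$ (one checks the $\ell=m$ term is exactly $\eps\ge0$ and the $\ell=m+1$ term would be $\eps-(n-1)<0$, hence contributes $0$). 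Summing $\sum_{\ell=1}^{m}\big((n-1)(m-\ell)+\eps\big)=(n-1)\binom{m}{2}+m\eps$ gives exactly
\[
g\ \le\ \frac{(n-1)m(m-1)}{2}+m\eps,
\]
which is the claim. I would present the summation step briefly since it is routine arithmetic, and flag that the hypothesis "not contained in a hyperplane" is used precisely to guarantee $h_\Gamma(1)=n$ rather than something smaller.
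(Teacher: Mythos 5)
The paper gives no proof of this proposition at all --- it is quoted as a classical result with references to Castelnuovo's paper and to \cite[p.252]{GH} --- and your sketch is precisely the standard argument from that reference: general position of a general hyperplane section $\Gamma=C\cap H$, the Hilbert-function bound $h_\Gamma(\ell)\ge\min\bigl(d,(n-1)\ell+1\bigr)$, the inequality $g\le\sum_{\ell\ge1}\bigl(d-h_\Gamma(\ell)\bigr)$ from Riemann--Roch, and a final summation that correctly yields $(n-1)m(m-1)/2+m\eps$. The one substantive step you do not actually carry out is the Hilbert-function lower bound itself (your intermediate display with ``lower terms'' and a stray ``$?$'' is not a genuine inequality), but you correctly identify it as the crux, state its correct form, and defer it to the cited literature --- which is exactly what the paper does for the whole proposition.
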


Reducing to the cases of $n=3,4$ we obtain the following:

\begin{coro}\item\label{Coro:Castelnuovo34}
\begin{enumerate}
\item If $C\subset \p^3$ be a smooth curve of genus~$g$ and degree~$d$ which is not contained in a plane, then 
$$g\le \left\lfloor\frac{d^2}{4}\right\rfloor-d+1.$$
\item If $C\subset \p^4$ be a smooth curve of genus~$g$ and degree~$d$ which is not contained in a hyperplane, then we have the better bound
$$g\le \left\lfloor\frac{d(d-5)}{6}\right\rfloor+1 \le \left\lfloor\frac{d^2}{4}\right\rfloor-d+1.$$
\end{enumerate}
\end{coro}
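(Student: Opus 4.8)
The plan is to derive both parts from Castelnuovo's bound (Proposition~\ref{Prop:CastelnuovoBound}) by specialising $n$ and doing a short Euclidean-division computation. For part~(1), take $n=3$, so the division reads $d-1 = 2m+\eps$ with $\eps\in\{0,1\}$, and Castelnuovo gives $g\le m(m-1)+m\eps = m^2-m(1-\eps)$. I would then split into the two parity cases: if $d$ is odd, $d-1=2m$ so $m=(d-1)/2$ and $\eps=0$, giving $g\le m^2-m = \frac{(d-1)^2}{4}-\frac{d-1}{2} = \frac{d^2-4d+3}{4}$, which equals $\lfloor d^2/4\rfloor - d + 1$ since $d^2/4$ has fractional part $1/4$; if $d$ is even, $d-1=2m+1$ so $m=(d-2)/2$ and $\eps=1$, giving $g\le m^2 = \frac{(d-2)^2}{4} = \frac{d^2}{4}-d+1 = \lfloor d^2/4\rfloor - d+1$. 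Both cases collapse to the stated uniform bound.

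For part~(2), take $n=4$, so $d-1 = 3m+\eps$ with $\eps\in\{0,1,2\}$, and Castelnuovo gives $g\le \frac{3m(m-1)}{2}+m\eps$. The cleanest route is to write $m = \lfloor (d-1)/3 \rfloor$ and bound the right-hand side uniformly. Substituting $3m = d-1-\eps$ one gets $\frac{3m(m-1)}{2}+m\eps = \frac{m(3m-3+2\eps)}{2} = \frac{m(d-4+\eps)}{2}$. Since $m\le (d-1)/3$ and $d-4+\eps \le d-2$, I would argue that $\frac{m(d-4+\eps)}{2} \le \frac{(d-1)(d-2)}{6}$ after checking the three residue cases of $d$ modulo $3$ separately, and that this quantity is at most $\lfloor d(d-5)/6\rfloor + 1$; a direct case check on $d\bmod 6$ (or $d\bmod 3$) pins down the floor. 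The final inequality $\lfloor d(d-5)/6\rfloor+1 \le \lfloor d^2/4\rfloor - d+1$ is then elementary: it reduces to $\frac{d(d-5)}{6} \le \frac{d^2}{4}-d$, i.e. $\frac{d(d-5)}{6}\le \frac{d(d-4)}{4}$, i.e. $2(d-5)\le 3(d-4)$ after clearing denominators and dividing by $d>0$, i.e. $2d-10\le 3d-12$, i.e. $2\le d$, which holds for all non-degenerate curves; one then only needs to observe that passing to floors preserves the inequality because the left side is bounded by a quantity no larger than the right side's value.

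The only mildly delicate point — what I expect to be the main obstacle — is handling the floor functions cleanly rather than with sloppy inequalities: Castelnuovo's bound is an integer, so I want to show the integer $\frac{m(d-4+\eps)}{2}$ is $\le \lfloor d(d-5)/6\rfloor+1$ rather than just $\le d(d-5)/6+1$, and similarly that the two displayed floors in part~(2) are genuinely comparable. I would dispose of this by a finite check: reduce $d$ modulo $6$, in each of the six residue classes compute $m$ and $\eps$ explicitly as linear functions of $d$, evaluate both sides, and verify the inequality as a polynomial inequality in $d$ valid for all $d\ge 2$ (or $d\ge 3$, since a non-degenerate curve in $\p^3$ or $\p^4$ has degree at least $3$). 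This is routine but should be written out carefully since the whole point of the corollary is to have a tight, correct bound to feed into the later classification of small-degree curves.
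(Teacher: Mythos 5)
Part (1) of your proposal is correct and is essentially the paper's argument: the paper packages your two parity cases into the single identity $\frac{d^2}{4}-d+1=m(m-1+\eps)+\frac{(1-\eps)^2}{4}$, but the content is identical. The final comparison $\lfloor d(d-5)/6\rfloor+1\le\lfloor d^2/4\rfloor-d+1$ is also handled correctly: an integer that is at most $d^2/4-d$ is at most $\lfloor d^2/4\rfloor-d$ because $d$ is an integer.

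The gap is in the middle of part (2). The reduction $\frac{3m(m-1)}{2}+m\eps=\frac{m(d-4+\eps)}{2}$ is correct, but the proposed majorisation by $\frac{(d-1)(d-2)}{6}$ (via $m\le\frac{d-1}{3}$ and $d-4+\eps\le d-2$) leads nowhere: the claim that $\frac{(d-1)(d-2)}{6}\le\lfloor d(d-5)/6\rfloor+1$ is false for every $d\ge 3$, since
$$\frac{(d-1)(d-2)}{6}-\Bigl(\frac{d(d-5)}{6}+1\Bigr)=\frac{2d-4}{6}=\frac{d-2}{3}>0.$$
The reason is that your two crude bounds are attained for different residues: $m=\frac{d-1}{3}$ forces $\eps=0$ while $d-4+\eps=d-2$ forces $\eps=2$, so decoupling $m$ from $\eps$ loses a quantity linear in $d$, which no amount of care with floors can recover. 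Your fallback plan (explicit residue-class computation) does work, but it must \emph{replace} the decoupled bound rather than ``pin it down'': keeping $m$ and $\eps$ together one finds that $\frac{m(d-4+\eps)}{2}$ equals $\frac{(d-1)(d-4)}{6}=\frac{d(d-5)}{6}+\frac{2}{3}$ when $\eps=0$ and $\frac{(d-2)(d-3)}{6}=\frac{d(d-5)}{6}+1$ when $\eps\in\{1,2\}$, which in all three cases is exactly $\lfloor d(d-5)/6\rfloor+1$. This is precisely the identity
$$\frac{d(d-5)}{6}+1=\frac{3m(m-1)}{2}+m\eps+\frac{(1-\eps)(2-\eps)}{6}$$
used in the paper, which gives equality (not just an upper bound) between Castelnuovo's bound for $n=4$ and the displayed floor.
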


\begin{proof}
\begin{enumerate}[wide]
\item Applying Proposition~\ref{Prop:CastelnuovoBound} to the case $n=3$ we obtain $d-1=2m+\eps $ and $g\le m(m-1+\eps)$. It remains to see that $m(m-1+\eps)=\lfloor\frac{d^2}{4}\rfloor-d+1$, which follows from the following equality
$$\frac{d^2}{4}-d+1=\frac{(2m+\eps+1)(2m+\eps-3)+4}{4}=m(m-1+\eps)+\frac{(1-\eps)^2}{4}.$$

\item Similarly, we apply Proposition~\ref{Prop:CastelnuovoBound} to the case $n=4$ and obtain $d-1=3m+\eps $ and $g\le 3m(m-1)/2+m\eps$. We then observe that $3m(m-1)/2+m\eps$ is equal to $\lfloor\frac{d(d-5)}{6}\rfloor+1$:
\[\begin{array}{rcl}
\displaystyle{\frac{d(d-5)}{6}+1}&=&\displaystyle{\frac{(3m+\eps+1)(3m+\eps-4)+6}{6}} \\
&=& \displaystyle{\frac{3m(m-1)}{2}+m\eps+\frac{(1-\eps)(2-\eps)}{6}}.\qedhere\end{array} \]
\end{enumerate}
\end{proof}

Using Castelnuovo's bound, we now obtain all possible values for the genus of a curve of small degree $d\le 6$ in a smooth cubic threefold:

\begin{lemm}\label{Lemm:Smalldegreecurves}
Let $Y\subset \p^4$ be a smooth cubic hypersurface.
\begin{enumerate}[wide]
\item \label{case1:smalldegree} If $C\subset Y$ is a smooth curve of degree $d\le 6$ and genus~$g$, then $g\le \tau(d)$, where $\tau(d)$ is given in the following table.\\

\begin{center}
\begin{tabular}{CCCCCCC}
\toprule
d & 1 & 2 & 3 & 4 & 5 & 6  \\
\tau(d) & 0 & 0 & 1 & 1 & 2 & 4 \\
\bottomrule
\end{tabular}
\end{center}
In particular, $(g,d)\in \Lp \cup  \Lq\cup \{(2,5),(0,6),(1,6),(3,6)\}$.
\item \label{case2:smalldegree} For each pair $(g,d)$ such that $1\le d\le 6$ and $0\le g \le \tau(d)$, and for each smooth hyperplane section $S\subset Y$, there exists a smooth curve $C\subset S\subset Y$ of genus~$g$ and degree~$d$.
\end{enumerate}
\end{lemm}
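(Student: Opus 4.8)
The plan is to treat the two parts separately. For part~\ref{case1:smalldegree}, the upper bound $g\le \tau(d)$ should follow directly from Castelnuovo's bound in the sharper form available for curves in $\p^4$. First I would dispose of the degenerate case: if $C$ is contained in a hyperplane, then $C$ is a smooth plane curve of degree $d$ (since a smooth curve spanning at most a $\p^2$ inside the hyperplane section must actually be planar, or span exactly a $\p^3$), so $g = \binom{d-1}{2}$ when it is planar, or $g \le \lfloor d^2/4\rfloor - d + 1$ by Corollary~\ref{Coro:Castelnuovo34}(1) when it spans a $\p^3$; one checks that for $d\le 6$ these values still respect $\tau(d)$ — actually a plane quartic has genus $3 > 1 = \tau(4)$, so I must rule out planar curves of higher genus using the cubic threefold: a plane curve of degree $d\ge 4$ inside $Y$ would force the plane to meet $Y$ in a curve of degree $d$, impossible since a plane not contained in $Y$ meets it in a cubic; and a plane contained in a smooth cubic threefold does not exist. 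Similarly a curve spanning a $\p^3$ lies in a hyperplane section, which is a cubic surface (possibly singular), and one bounds the genus of curves on cubic surfaces of degree $\le 6$ by direct inspection. If $C$ is non-degenerate in $\p^4$, Corollary~\ref{Coro:Castelnuovo34}(2) gives $g\le \lfloor d(d-5)/6\rfloor + 1$, which evaluates to $0,0,0,0,1,2$ for $d=1,\dots,6$ — all within $\tau(d)$. Assembling these cases yields $g\le\tau(d)$, and reading off the admissible $(g,d)$ gives the claimed membership in $\Lp\cup\Lq\cup\{(2,5),(0,6),(1,6),(3,6)\}$ after noting which pairs with $g\le\tau(d)$ appear.

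For part~\ref{case2:smalldegree}, the strategy is to produce $C$ inside a fixed smooth hyperplane section $S\subset Y$, which is a smooth cubic surface, hence isomorphic to $\p^2$ blown up at $6$ points in general position, with $\Pic(S)$ generated by $H$ (the hyperplane class, i.e.\ $-K_S$) and the exceptional classes. On such a surface the genus and degree of a smooth curve in a given class are computed by adjunction and intersection with $-K_S$. So the task reduces to: for each $(g,d)$ with $1\le d\le 6$ and $0\le g\le\tau(d)$, exhibit an effective class $D$ on a smooth cubic surface with $D\cdot(-K_S)=d$, $D^2 = 2g-2+d$ (equivalently $2g-2 = D^2 - D\cdot(-K_S) = D\cdot(D+K_S)$), and such that a general member of $|D|$ is smooth and irreducible. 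The relevant classes are low-degree ones: lines, conics, twisted cubics, etc.\ on the cubic surface. I would simply tabulate a suitable $D$ for each of the finitely many pairs (lines give $(0,1)$; conics $(0,2)$; plane cubics and twisted cubics on $S$ give $(1,3)$ and $(0,3)$; and so on up to $d=6$), invoking Bertini for smoothness of the general member once $|D|$ is base-point-free, and a standard irreducibility argument (or explicit description) otherwise. Since every smooth hyperplane section of $Y$ is such a cubic surface, the construction works for each $S$.

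The main obstacle I anticipate is part~\ref{case1:smalldegree} in the \emph{degenerate} cases — making sure that a curve of low degree contained in a hyperplane (or lower linear space) inside $Y$ cannot have genus exceeding $\tau(d)$, since naive Castelnuovo-type bounds for plane curves are weaker than $\tau(d)$ for $d=4,5,6$. The resolution is geometric rather than numerical: one uses that $Y$ is a smooth cubic threefold to constrain which linear subspaces it can contain (no planes, and a hyperplane section is an irreducible cubic surface with at worst isolated singularities), and then bounds genera of low-degree curves on cubic surfaces. A secondary, more bookkeeping-type obstacle in part~\ref{case2:smalldegree} is verifying irreducibility and smoothness of the general member of $|D|$ for the handful of classes where $|D|$ is not obviously base-point-free; these are all classical facts about linear systems on cubic surfaces and can be dispatched case by case.
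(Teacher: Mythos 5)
Your proposal is correct and follows essentially the same route as the paper: for part (1) the paper likewise observes that a smooth cubic threefold contains no plane (so planar curves have degree at most $3$) and then applies the Castelnuovo bound of Corollary~\ref{Coro:Castelnuovo34} to non-planar curves, where $\lfloor d^2/4\rfloor-d+1\le\tau(d)$ already suffices for $d\le 6$ (making your fallback of inspecting curves on cubic surfaces unnecessary); for part (2) the paper also realises each pair $(g,d)$ as the strict transform of an explicit plane curve of degree $k$ with assigned multiplicities on the cubic surface viewed as $\p^2$ blown up at six points.
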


\begin{proof}
Suppose first that $C$ is contained in a plane. Since $Y$ does not contain any plane (a classical fact which can be easily checked in coordinates), then~$C$ is of degree at most $3$, hence $(g,d)\in \{(0,1),(0,2),(1,3)\}$. 

Assume now that $C$ is not contained in a plane and that $d\le 6$. 
By Proposition~\ref{Prop:CastelnuovoBound} we get $g\le \lfloor\frac{d^2}{4}\rfloor-d+1\le \tau(d).$
This achieves to prove Assertion \ref{case1:smalldegree}.\\

In order to prove \ref{case2:smalldegree}, we view the smooth cubic surface $S$ as the blow-up of $\p^2$ at six points $p_1,\dots,p_6$. The Picard group of $S$ is generated by $L,E_1,\dots,E_6$, where $L$ is the pull-back of a general line and $E_i$ is the exceptional curve contracted onto $p_i$ for each $i$. We choose a curve $D\subset\p^2$ of degree $k$ having multiplicity $m_i$ at $p_i$, so that the strict transform $C\subset S$, equivalent to $\tilde{C}\sim kL-\sum m_iE_i$, is smooth. The hyperplane section being equivalent to $-K_S=3L-\sum E_i$, the degree and genus of $C$ are equal to $$\begin{array}{ll}
g=\frac{(k-1)(k-2)}{2}-\sum \frac{m_i(m_i-1)}{2},&\ d=3k-\sum m_i.\end{array}$$ 

It is then an easy exercise to produce the desired curves:

We choose $D$ to be a conic through $l$ of the $p_i$, with $0\le l\le 5$ and obtain pairs $(0,d)$ with $1\le d\le 6$. 

With a smooth cubic passing through $l$ of the $p_i$, $0\le l\le 6$ we obtain pairs $(1,d)$ with $3\le d\le 9$.

Taking a quartic having a double point at $p_1$ and passing through $l$ other $p_i$, $0\le l\le 5$, we obtain pairs $(2,d)$ with $5\le d\le 10$.

Smooth quartics yield pairs $(3,d)$ with $6\le d\le 12$.

The pair $(4,6)$ is the complete intersection of $S$ with a general quadric, and corresponds to sextics with $6$ double points at the $p_i$.
\end{proof}

\subsection{Curves that yield a weak-Fano threefold have small degree}\label{Sec:WeakFanoSmalldegree}
In this section, we prove that if a smooth curve $C$ of genus~$g$ and degree~$d$ gives rise to a weak-Fano threefold after blow-up, then $(g,d)$ must belong to the list $\Lp\cup \Lq\cup \{(2,5),(3,6)\}$. 
The two last possibilities will be removed in the next section (Proposition~\ref{Prop:Always3secant}).\\

We shall need the following classical (but difficult) result about the anticanonical linear system on a weak-Fano threefold.

\begin{prop} \label{Prop:-K irreducible}
Let $X$ be a smooth weak-Fano threefold.
Then 
$$\dim \lvert -K_X \rvert = \frac12 (-K_X)^3 + 2 \ge 3$$
and the general member of $\lvert -K_X \rvert$ is an irreducible $K3$-surface $($in particular the base locus of $\lvert -K_X \rvert$ has at most dimension $1)$.
\end{prop}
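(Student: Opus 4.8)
The statement I want to prove is the classical result on the anticanonical system of a smooth weak-Fano threefold $X$: that $\dim|-K_X| = \tfrac12(-K_X)^3+2\ge 3$ and that the general member is an irreducible $K3$-surface. I would organise the argument in three stages: (1) compute $h^0(X,-K_X)$ via vanishing of higher cohomology plus Riemann–Roch; (2) deduce that the general member $S\in|-K_X|$ is irreducible; (3) identify $S$ as a (possibly singular, but on a smooth model a) $K3$-surface, using adjunction and vanishing on $S$. Since $-K_X$ is big and nef but not necessarily ample, the standard Kodaira vanishing does not apply directly, and this is where the work lies.

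\medskip

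\emph{Step 1: the dimension formula.} The plan is to invoke Kawamata–Viehweg vanishing: since $-K_X$ is nef and big, $H^i(X,K_X+(-K_X))=H^i(X,\mathcal O_X)=0$ for $i>0$, and also $H^i(X,-K_X)=H^i(X,K_X+2(-K_X))=0$ for $i>0$ because $2(-K_X)-K_X = -3K_X$ is again nef and big (equivalently, apply KV to the nef and big divisor $-2K_X$). Hence $h^0(X,-K_X)=\chi(X,-K_X)$, and by Riemann–Roch on a smooth threefold together with $\chi(\mathcal O_X)=1$ (which follows from $h^i(\mathcal O_X)=0$ for $i>0$, itself a consequence of $X$ being weak-Fano, or directly from rationally-connectedness) one gets $\chi(X,-K_X)=\tfrac{1}{12}(-K_X)\cdot((-K_X)-K_X)\cdot(2(-K_X)-K_X)+\chi(\mathcal O_X)$, which simplifies to $\tfrac12(-K_X)^3+2$ (the standard computation; I would cite \cite[Lemma 2.2.14]{IP} or reproduce the half-page calculation). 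Since $(-K_X)^3\ge 2$ for a weak-Fano threefold — indeed $(-K_X)^3$ is a positive even integer because $-K_X$ is big so $(-K_X)^3>0$, and it is even by Riemann–Roch parity — the dimension is at least $3$.

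\medskip

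\emph{Steps 2 and 3: irreducibility and the $K3$ conclusion.} For irreducibility of the general member, I would argue that since $\dim|-K_X|\ge 3$, the system is not composed with a pencil; if the general $S$ were reducible, writing $S=S_1+S_2$ with $S_1$ moving would force $-K_X$ to be a sum of two effective nef-ish pieces, contradicting either that $|-K_X|$ has no fixed components (which follows from $-K_X$ nef: a fixed component would be a curve or surface in the base locus, but Proposition~\ref{Prop:-K irreducible} is precisely giving a bound on that base locus, so one must be careful not to argue circularly — instead one uses that $-K_X$ nef and big implies the \emph{moving} part $M$ of $|-K_X|$ satisfies $M^3>0$ and runs a Hodge-index / connectedness argument) — the cleanest route is the connectedness theorem: the general member of a base-point-free-away-from-finitely-many-points big system is connected, and connectedness plus $-K_X\cdot C>0$ for curves not contracted gives irreducibility. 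For the $K3$ statement: take the general $S\in|-K_X|$; by adjunction $K_S=(K_X+S)|_S=\mathcal O_S$, so $S$ has trivial dualising sheaf; and the restriction sequence $0\to\mathcal O_X\to\mathcal O_X(-K_X)\to\mathcal O_S(-K_X|_S)\to 0$ combined with $h^1(\mathcal O_X)=0$ and the already-established $h^0$ formula gives $h^1(\mathcal O_S)=0$, so $S$ is a $K3$-surface (with at worst Du Val singularities, resolved by a crepant morphism, hence a genuine $K3$ on the minimal model). The parenthetical "base locus has at most dimension $1$" is then immediate: a surface in $\mathrm{Bs}|-K_X|$ would be a fixed component, impossible since $S$ is irreducible and moves.

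\medskip

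\emph{Main obstacle.} The delicate point is the irreducibility of the general member when $-K_X$ is only big and nef (not ample): one cannot blindly quote Bertini, and one must avoid circularity with the base-locus claim. I expect the honest argument to go through the Shokurov/Reid-type connectedness results for weak-Fano threefolds, or simply to cite the literature (\cite{IP}, \cite{Sho}) where this "difficult classical result" is established; indeed the paper itself flags it as "classical (but difficult)", so in practice I would state it with a reference rather than reprove it, and devote my own words only to the Riemann–Roch computation in Step 1.
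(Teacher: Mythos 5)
Your approach coincides with the paper's: the dimension formula is obtained from Kawamata--Viehweg vanishing plus Riemann--Roch, and the irreducibility/$K3$ statement is delegated to the literature (the paper cites \cite[Theorem~(0.4)]{shin} for that part rather than attempting the connectedness argument you sketch). One bookkeeping remark: the Riemann--Roch expression you wrote omits the term $\frac{1}{12}(-K_X)\cdot c_2(X)=2\chi(\mathcal O_X)=2$, which is precisely what turns $\frac12(-K_X)^3+1$ into $h^0(-K_X)=\frac12(-K_X)^3+3$, i.e.\ $\dim\lvert -K_X\rvert=\frac12(-K_X)^3+2$.
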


\begin{proof}
The first assertion is a direct consequence of the Riemann-Roch formula for threefolds (see e.g.~\cite[p.~437]{Har}) and Kawamata-Viehweg vanishing (\cite[Theorem~4.3.1]{Laz1}).
For the second assertion, see \cite[Theorem~(0.4)]{shin}.
\end{proof}

\begin{prop} \label{Prop:12 possibilities}
Let $C \subset Y$ be a smooth curve of genus~$g$ and degree~$d$ in a smooth cubic threefold.
Assume that the blow-up $X$ of~$Y$ along $C$ is a weak-Fano threefold.
Then $d \le 6$, and  $(g,d)$ is given in the following table\\
\begin{center}
\upshape\begin{tabular}{cCCCCCCC}
\toprule
$d$ & 1 & 2 & 3 & 4 & 5 & 6\\
$g$ & 0 & 0 & 0,1 & 0,1& 0,1,2 &2,3,4\\
\bottomrule
\end{tabular}
\end{center}
\bigskip 
In other words, with the notation of the introduction, 
$$(g,d) \in \Lp\cup \Lq\cup \{(2,5),(3,6)\}.$$
\end{prop}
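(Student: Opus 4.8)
The plan is to combine a few numerical consequences of $-K_X$ being big and nef with Castelnuovo's bound, treating separately the case where $C$ lies in a hyperplane of $\p^4$.

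I would first record three inequalities on the pair $(g,d)$. Since $X$ is weak-Fano, $(-K_X)^3>0$, so Lemma~\ref{Lemm:K^3} gives $g\ge 2d-10$. Since $-K_X$ is nef, its restriction to the $\p^1$-bundle $E$ is a nef divisor on a projective surface, so $(-K_X)^2\cdot E=\bigl((-K_X)|_E\bigr)^2\ge 0$; by Lemma~\ref{Lemm:K^3} this reads $2+2d-2g\ge 0$, i.e.\ $g\le d+1$. Finally, by Proposition~\ref{Prop:-K irreducible} a general member $T\in|-K_X|$ is a smooth irreducible $K3$ surface, on which I would apply the Hodge index theorem. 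Writing $H_T:=(\pi^*H)|_T$ and $C':=E\cap T$, a direct computation using $-K_X=2\pi^*H-E$ gives $H_T^2=6$ and $H_T\cdot C'=d$; moreover $\pi$ restricts to a finite birational morphism $C'\to C$, hence to an isomorphism since $C$ is smooth, so $C'$ is a smooth curve of genus~$g$ on the $K3$ surface $T$ and adjunction gives $C'^2=2g-2$. Hodge index then yields $6(2g-2)\le d^2$, that is $g\le\tfrac{d^2+12}{12}$.

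Suppose $C$ lies in a hyperplane $H\cong\p^3$ of $\p^4$; I claim this already forces $d\le 6$. The surface $\bar T:=\pi(T)\subset Y$ is irreducible of degree $(\pi^*H)^2\cdot T=6$ and contains $C$. It is not contained in $H$: otherwise $\bar T\subseteq Y\cap H$, and $Y\cap H$ is an irreducible cubic surface (it has no plane component, as $Y$ contains no plane), so the inclusion of two irreducible surfaces would be an equality, contradicting $6\ne 3$. Hence $C\subseteq\bar T\cap H$, which is a curve of degree $\deg\bar T=6$, whence $d=\deg C\le 6$. Together with $g\ge 2d-10$ and Lemma~\ref{Lemm:Smalldegreecurves}, this places $(g,d)$ in the asserted table: the pairs $(0,6)$ and $(1,6)$ allowed by that lemma are excluded by $g\ge 2d-10=2$.

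Suppose now $C$ lies in no hyperplane. Combining $g\ge 2d-10$ with $g\le\tfrac{d^2+12}{12}$ gives $d^2-24d+132\ge 0$, hence $d\le 8$ or $d\ge 16$; since $2d-10\le g\le d+1$ forces $d\le 11$, we conclude $d\le 8$. For $d=7$ these same inequalities force $g\in\{4,5\}$, and for $d=8$ they force $g=6$; but Castelnuovo's bound (Corollary~\ref{Coro:Castelnuovo34}(2)) gives $g\le 3$ when $d=7$ and $g\le 5$ when $d=8$, a contradiction. Therefore $d\le 6$ here as well, and $g\ge 2d-10$ together with Lemma~\ref{Lemm:Smalldegreecurves} yields exactly $(g,d)\in\Lp\cup\Lq\cup\{(2,5),(3,6)\}$.

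The routine part is the intersection-number bookkeeping on $X$ and the elementary case analysis in $d$. The one point needing care is the hyperplane case: it relies on knowing that the general anticanonical member of a weak-Fano threefold is \emph{irreducible}, which lets one realise $C$ inside a genuine irreducible surface of $Y$ of degree $6$ — this is Proposition~\ref{Prop:-K irreducible}, used together with the classical fact that a smooth cubic threefold contains no plane.
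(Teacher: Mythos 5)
Your proposal is correct, and it reaches the conclusion by a genuinely different route for the crucial degree bound. The paper gets $d\le 8$ directly from Proposition~\ref{Prop:-K irreducible}: since $C$ lies in an (at least) $2$-dimensional system of hyperquadric sections with no fixed component, the base curve of that system has degree at most $8$. You replace this with two numerical consequences of nefness that the paper does not use: $(-K_X)^2\cdot E\ge 0$ (giving $g\le d+1$) and the Hodge index inequality $6(2g-2)\le d^2$ on a general anticanonical $K3$ member, which together with $(-K_X)^3>0$ give $d\le 8$; Castelnuovo then kills $d=7,8$ exactly as in the paper (the paper phrases this as $0\le (d-6)(d-11)/6$ combined with $d\le 8$). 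Your treatment of the hyperplane case is the same as the paper's: an irreducible hyperquadric section through $C$ cannot lie in a hyperplane, so $C$ sits in a degree-$6$ curve. What your approach buys is that the only input from Proposition~\ref{Prop:-K irreducible} is irreducibility of the general member (you never need the statement about the base locus or the dimension count $\dim\lvert -K_X\rvert\ge 3$), at the cost of invoking the Hodge index theorem. One small imprecision: $C'=E\cap T$ need not be irreducible (it could a priori contain fibres of $E\to C$ for special behaviour of the linear system along $E$), so the identification $C'\cong C$ and the appeal to adjunction are not quite justified as stated; but this is harmless, since $(C')^2=E^2\cdot T=2g-2$ and $H_T\cdot C'=d$ follow from direct intersection computations on $X$, and the Hodge index inequality $H_T^2\,(C')^2\le (H_T\cdot C')^2$ holds for an arbitrary divisor class on $T$. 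Also note that Proposition~\ref{Prop:-K irreducible} does not literally assert smoothness of $T$; if $T$ has Du Val singularities the Hodge index argument still applies to the pulled-back Cartier classes on a resolution, so nothing is lost.
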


\begin{proof}
The linear system $\lvert-K_X\rvert$ corresponds on $Y$ to hyperquadric sections through $C$.
By Proposition~\ref{Prop:-K irreducible}, the curve $C$ is contained in a 2-dimensional linear system of hyperquadric sections, which has no fixed component, hence $d \le 8$.
Since a general hyperquadric section through $C$ is irreducible, if $C$ is also contained in a hyperplane section, then we immediately get $d \le 6$.
Otherwise, by Corollary~\ref{Coro:Castelnuovo34} and Lemma~\ref{Lemm:K^3} we have 
$$2(d-5) \le g \le \left\lfloor\frac{d(d-5)}{6}\right\rfloor+1,$$
which yields 

$$0\le \frac{d(d-5)}{6}+1-2(d-5)=\frac{(d-6)(d-11)}{6}.$$
Hence, $d\in \{7,8\}$ is not possible.
We get the list of possible $(g,d)$ from Lemma~\ref{Lemm:Smalldegreecurves}, by removing the cases $(0,6)$ and $(1,6)$, which do not satisfy the condition $2(d-5) \le g$. 
\end{proof}

\subsection{Curves contained in a hyperplane section}\label{Sec:hyperplane}
In this section, we show that if the blow-up of a smooth cubic threefold $Y$ along a smooth curve $C\subset Y$ of type $(g,d)$ is weak-Fano, and $C$ is contained in a hyperplane section, then $(g,d)\in \Lp$. This implies that the possibilities for $(g,d)$, without the assumption to be in a hyperplane, are $\Lp\cup \Lq$.\\

Note that a hyperplane section is an irreducible and reduced cubic surface~$S$, which is smooth in general but can also have singularities. For instance it can have double points or be the cone over a smooth cubic. Lemma~\ref{Lemm:smoothcubics} treats the case of smooth hyperplane sections and is used for the general case, treated in Proposition~\ref{Prop:Always3secant}.

\begin{lemm}
\label{Lemm:smoothcubics}
Let $C\subset  S\subset \p^3$ be a smooth curve of genus~$g$ and degree~$d$ in a smooth cubic surface $S$. If there is no $3$-secant line of $C$ to $S$, then 
$$(g,d)\in \{\, (0,1),\, (0,2),\, (0,3),\, (1,3),\, (1,4),\, (4,6)\, \}=\Lp.$$
Moreover, all possibilities occurs in each smooth cubic surface.
\end{lemm}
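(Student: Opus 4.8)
The plan is to work directly on the smooth cubic surface $S$, viewed as $\p^2$ blown up at six points in general position, so that $\Pic(S)$ is generated by $L, E_1, \dots, E_6$ with $-K_S \sim 3L - \sum E_i$, and the hyperplane section is $-K_S$. A smooth irreducible curve $C \subset S$ has class $\tilde C \sim aL - \sum b_i E_i$ for suitable integers, and its degree and genus are
$$d = -K_S \cdot \tilde C = 3a - \sum b_i, \qquad 2g-2 = \tilde C \cdot (\tilde C + K_S).$$
The key geometric input is the $3$-secant condition. The $27$ lines on $S$ are exactly the $(-1)$-curves; a line $\ell$ on $S$ is $3$-secant to $C$ precisely when $\ell \cdot \tilde C \ge 3$ (since $\ell$ is then a curve of degree $1$ meeting $C$ in $\ge 3$ points). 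So the hypothesis ``no $3$-secant line'' becomes: $\tilde C \cdot \mathcal{E} \le 2$ for every $(-1)$-curve class $\mathcal{E}$ on $S$. First I would record the standard list of $(-1)$-curve classes — the $E_i$, the classes $L - E_i - E_j$, and the classes $2L - \sum_{k \ne i} E_k$ — and also note that $-K_S$ itself is a nonnegative sum of such classes, which together with $\tilde C$ effective (hence $\tilde C \cdot \mathcal{E} \ge 0$ for all but possibly finitely many $(-1)$-curves, and $\ge 0$ whenever $C$ is not a component of $\mathcal{E}$) will let me bound $d$.

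The heart of the argument is a bounding step: from $\tilde C \cdot E_i \le 2$, $\tilde C \cdot (L - E_i - E_j) \le 2$, and $\tilde C \cdot (2L - E_{i_1} - \cdots - E_{i_5}) \le 2$ for all choices of indices, deduce a bound on $d$. Writing $\tilde C = aL - \sum b_i E_i$ with (after reordering) $b_1 \ge b_2 \ge \cdots \ge b_6 \ge 0$ — the $b_i$ are nonnegative because $C$ is smooth irreducible and not equal to any $E_i$, unless $C = E_i$ itself which gives $(0,1)$ directly — the conditions read $b_1 \le 2$, $a - b_i - b_j \le 2$, and $2a - (b_{i_1} + \cdots + b_{i_5}) \le 2$. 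From $b_1 \le 2$ and $a - b_1 - b_2 \le 2$ we get $a \le 6$; combining with $d = 3a - \sum b_i$ and the five-point condition one extracts $d \le 6$. Then for each $a \in \{1,\dots,6\}$ I would enumerate the finitely many admissible $(b_i)$ satisfying the $(-1)$-curve inequalities together with the smoothness/effectivity constraints, compute $(g,d)$ by the formulas above, and check that exactly the pairs in $\Lp$ survive — discarding e.g.\ $(2,5)$ and $(3,6)$, which by Lemma~\ref{Lemm:hypersurfaces}\eqref{item:planelist} and the earlier discussion must contain a $3$-secant line, so they cannot appear here; this cross-check also serves as a sanity verification of the enumeration. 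I expect the main obstacle to be the case analysis when $a$ is $4$, $5$ or $6$: there are several partitions of $\sum b_i$ to consider, and one must be careful to use the full strength of all $(-1)$-curve inequalities (in particular the conic classes $2L - \sum_{k \ne i} E_k$, which are what rule out high-genus curves like those of type $(3,6)$ or $(2,5)$) rather than just the lines $L - E_i - E_j$.

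Finally, for the ``moreover'' clause I would exhibit each pair explicitly on an arbitrary smooth cubic surface $S$: the constructions are already essentially given in the proof of Lemma~\ref{Lemm:Smalldegreecurves}\eqref{case2:smalldegree} (lines, conics, twisted cubics, the class of a smooth plane quartic through a point, and the complete intersection with a quadric for $(4,6)$), and it remains only to verify in each case that a generic such curve is smooth and meets every one of the $27$ lines in at most $2$ points — the latter following because the relevant linear system on $S$ is base-point-free away from finitely many points and separates points there, or more simply by a direct dimension count showing the locus of curves with a $3$-secant line is a proper closed subset of the (irreducible) linear system $|\tilde C|$.
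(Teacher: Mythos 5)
Your proposal is correct and follows essentially the same route as the paper: write $C$ as $aL-\sum b_iE_i$ on the cubic surface, translate the absence of $3$-secant lines into $\tilde C\cdot\ell\le 2$ for all $27$ line classes (the conic classes $2L-\sum_{k\ne i}E_k$ being the crucial ones for excluding types such as $(2,5)$ and $(3,6)$), bound $a\le 6$, and enumerate. The only difference is that the paper additionally normalises the choice of blow-down so that $a$ is minimal, which yields $a\ge b_1+b_2+b_3$ and cuts the case analysis down to five classes plus the lines $E_i$; your unnormalised enumeration is longer but still finite and leads to the same list $\Lp$.
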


\begin{proof}
Choosing six disjoint lines $E_1,\dots,E_6\subset S$, we obtain a birational morphism $S\to \p^2$ that contracts the six curves onto six points $p_1,\dots,p_6$. We denote by $L\in \Pic(S)$ the pull-back of a general line. 
If $C$ is equal to one of the six lines $E_1,\dots,E_6$, then $(g,d)=(0,1)$. 
We can thus assume that the curve $C$ is the strict transform of a curve of $\p^2$ of degree $k$ having multiplicity $m_1,\dots,m_6$ at the points $p_1,\dots,p_6$. 
Hence, $C\sim kL-\sum a_i E_i$. We take then a set of six lines such that the integer $k$ is minimal and order the lines such that $m_1\ge \dots \ge m_6$. This implies that $k\le m_1+m_2+m_3$. 
Indeed, otherwise the contraction of the six curves $L-E_1-E_2,L-E_1-E_3,L-E_2-E_3,E_4,E_5,E_6$ would yield a curve of $\p^2$ of degree $2k-m_1-m_2-m_3<k$.

As there is no $3$-secant line to $C$, we have $a_i=C\cdot E_i\le 2$ for each $i$. Moreover, $C\cdot (L-E_5-E_6)\le 2$ and $C\cdot (2L-E_2-E_3-E_4-E_5-E_6)\le 2$ and in particular $k\le 6$. These conditions, together with $k\ge m_1+m_2+m_3$ yield the following five solutions
\begin{center}
\begin{tabular}{CCCC}
\toprule
g & d& k & (m_1,\cdots, m_6) \\
\midrule
0&2& 1 & (1,0,0,0,0,0) \\
0&3& 1 & (0,0,0,0,0,0)\\
1 & 3 &3 & (1,1,1,1,1,1) \\
1&4& 3 & (1,1,1,1,1,0) \\
4&6& 6 & (2,2,2,2,2,2) \\
\bottomrule
\end{tabular}
\end{center}
These, together with the lines $E_i$, correspond to the elements of $\Lp$. As explained in Lemma~\ref{Lemm:Smalldegreecurves}, each of these exists on any smooth cubic surface.
\end{proof}

\begin{prop}\label{Prop:Always3secant}
Let $Y\subset \p^4$ be a smooth cubic hypersurface and let $C\subset Y$ be a smooth curve of genus~$g$ and degree~$d$, such that one of the following holds:
\begin{enumerate}
\item \label{case:secant1} $(g,d)\in \{(2,5),(3,6)\}$;
\item \label{case:secant2} $(g,d)\in \Lq$  and $C$ is contained in a hyperplane section.
\end{enumerate}
Then $C$ is contained in a unique hyperplane section $S\subset Y$, and there exists a $3$-secant line to $C$ inside $S$.
In particular, if $X\to Y$ is the blow-up of $C$, then $-K_X$ is big but not nef.
\end{prop}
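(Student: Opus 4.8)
The plan is to reduce everything to the cubic surface case handled by Lemma~\ref{Lemm:smoothcubics}, but with the extra subtlety that the hyperplane section $S$ containing $C$ need not be smooth. First I would establish uniqueness of the hyperplane section. In case~\parent{\ref{case:secant1}}, Lemma~\ref{Lemm:hypersurfaces}\parent{\ref{item:plane1}}--\parent{\ref{item:plane2}} gives a hyperplane containing $C$ (recall $(2,5),(3,6)\notin\Lq$ so they were put in the ``plane'' list of Lemma~\ref{Lemm:hypersurfaces}\parent{\ref{item:planelist}}); since $Y$ is not a plane nor contained in a hyperplane, the hyperplane section $S$ is an irreducible reduced cubic surface. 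Uniqueness: if $C$ lay in two distinct hyperplane sections, it would lie in their intersection, a curve of degree~$3$, forcing $d\le 3$, contradicting $d\ge 5$. The same degree argument gives uniqueness in case~\parent{\ref{case:secant2}}.

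Next I would find the required $3$-secant line inside $S$. The clean approach is to note that a $3$-secant line to $C$ \emph{inside the surface $S\subset\p^3$} is exactly a $3$-secant line in $\p^3$ that happens to lie on $S$; and since $S$ is a cubic surface, any line meeting $C$ (hence $S$) in $\ge 2$ points with the right incidence will be contained in $S$ once it meets $S$ in $\ge 4$ points counted with the hyperplane class, i.e.\ it suffices to produce a $3$-secant line to $C$ in $\p^3$ and check it lies on $S$—but a $3$-secant line meets the cubic surface $S$ in $\ge 3$ points (along $C\subset S$), and a line meeting a cubic surface in $\ge 4$ points lies on it; a $3$-secant line to a curve of degree $d\ge 5$ that is not itself contained in a hyperplane in a sub-$\p^3$ will generically meet $S$ in a fourth point, but to be safe I would instead argue directly on $S$. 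So the real content is: \emph{a smooth curve $C$ of type $(g,d)$ on an irreducible reduced cubic surface $S\subset\p^3$, with $(g,d)$ as in the statement, admits a $3$-secant line lying on $S$.}

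For a \emph{smooth} $S$ this is the contrapositive of Lemma~\ref{Lemm:smoothcubics}: since $(2,5)$ and $(3,6)$ are not in $\Lp$, and the elements of $\Lq$ are not in $\Lp$ either, Lemma~\ref{Lemm:smoothcubics} forces the existence of a $3$-secant line of $C$ on $S$. The main obstacle, and the step I expect to be the crux, is handling the \emph{singular} hyperplane sections: $S$ may be a cubic cone, or a cubic surface with rational double points (of types $A_1,\dots,A_5,D_4,A_5$ in the ADE classification of singular cubic surfaces). The strategy here is: (a) if $S$ is a cone over a smooth plane cubic with vertex $v$, then the lines on $S$ are exactly the rulings through $v$; projecting $C$ from $v$ realizes $C$ as a cover of the cubic curve, and a rulings-count (using $\deg C=d$ and the fact that $C$ is smooth hence avoids $v$ or meets it transversally, ruled out since $d\le 6$ is small) produces a ruling meeting $C$ in $\ge 3$ points. (b) If $S$ has only rational double points, resolve $\epsilon\colon \tilde S\to S$; then $\tilde S$ is a weak Del Pezzo surface of degree~$3$, the strict transform $\tilde C$ is smooth with the same $g$, and its class in $\Pic(\tilde S)$ can be written $kL-\sum a_iE_i$ with $L$ big-and-nef, the $E_i$ the exceptional $(-2)$-curves and lines; I would run the same minimality-of-$k$ and intersection-bound argument as in Lemma~\ref{Lemm:smoothcubics} (the only difference being that some ``$E_i$'' are now $(-2)$-curves, which one must include among the potential $3$-secant lines since their images on $S$ may be points, but on $\tilde S$ the image line of $\p^3$ lying through the singularity still gives a genuine line on $S$ that is $3$-secant). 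The upshot: the only classes $kL-\sum a_iE_i$ with $g$ in the allowed range and all relevant intersections $\le 2$ are those of $\Lp$, so any curve of type $(2,5)$, $(3,6)$, or of type in $\Lq$ on $S$ must violate one of the inequalities, giving a line on $S$ (possibly passing through a singular point of $S$, possibly a component of the singular fibre structure) that is $3$-secant to $C$.

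Finally, the last sentence is immediate: by Lemma~\ref{Lemm:K^3}, $(g,d)$ in cases~\parent{\ref{case:secant1}} or \parent{\ref{case:secant2}} satisfies $2(d-5)\le g$ (indeed for $(2,5),(3,6)$ and all of $\Lq$ this holds), so $(-K_X)^3>0$ and $-K_X$ is big; but the existence of a $3$-secant line $\ell$ to $C$ gives, by Lemma~\ref{Lemm:fano1} with $n=1$, $m=3$, the bound $-K_X\cdot\tilde\ell\le 2-3=-1<0$, so $-K_X$ is not nef. This completes the argument modulo the case analysis on singular cubic surfaces, which is where all the work lies.
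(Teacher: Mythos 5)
Your overall strategy---reduce to Lemma~\ref{Lemm:smoothcubics} and then fight through the singular hyperplane sections by hand---is genuinely different from the paper's, which sidesteps the case analysis entirely by a specialization argument: since $g+3\le d$ or $g+9\le 2d\le 22$, the Hilbert scheme $\hilb$ of such space curves is irreducible \cite{Guf}, hence the incidence variety $Z\subset\hilb\times W$ of pairs $(D,S)$ with $D$ contained in the cubic surface $S\subset\p^3$ is irreducible (it surjects onto $\hilb$ by a closed map with irreducible linear fibres); the locus $Z_3$ of pairs admitting a $3$-secant line inside $S$ is closed and contains the open locus where $S$ is smooth (which is exactly your contrapositive reading of Lemma~\ref{Lemm:smoothcubics}), so $Z_3=Z$. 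This buys the singular cases for free. Your uniqueness argument and the final ``big but not nef'' step agree with the paper.

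The concrete problem with your plan is the singular-surface analysis itself, and in particular the cone case as you describe it cannot work: on the cone over a smooth plane cubic $\Gamma$ with vertex $v$, a smooth curve $C$ of degree $d$ avoiding $v$ projects to $\Gamma$ with degree $d/3$, so every ruling meets $C$ in exactly $d/3\le 2$ points, and the only lines on the cone are the rulings---no ``rulings-count'' can ever produce a $3$-secant line. The correct resolution is that no smooth curve of the relevant types lies on a cubic cone at all (degree $5$ is excluded for numerical reasons, and a smooth degree-$6$ curve off the vertex is numerically $2H$ on the minimal resolution, hence of genus $4$); this is a separate non-existence statement you would have to prove, and the paper proves precisely the $(2,6)$ instance of it later, in Lemma~\ref{lem:26sing}. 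Likewise, for sections with du Val singularities, the argument of Lemma~\ref{Lemm:smoothcubics} relies on classes such as $L-E_5-E_6$ and $2L-E_2-\dots-E_6$ being represented by actual lines of $S$; on a weak Del Pezzo surface such classes may be supported on $(-2)$-curves or be reducible, so the bounds $C\cdot(\,\cdot\,)\le 2$ are no longer forced by the absence of a $3$-secant line on $S$, and the minimality-of-$k$ step (which uses quadratic transformations based at three proper, non-collinear points) also needs care. These gaps are presumably fixable with a longer case analysis, but they are exactly the work the paper's irreducibility argument is designed to avoid.
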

\begin{proof}
In case~\ref{case:secant1}, the fact that $C$ is contained in a hyperplane section $S\subset Y$  follows from Lemma~\ref{Lemm:hypersurfaces}\ref{item:planelist}. 
In case~\ref{case:secant2}, it is given by hypothesis.
The hyperplane, and thus the surface $S$, is unique since $d> 3$.

It is known (since either $g + 3 \le d$ or $g + 9 \le 2d \le 22$, see \cite{Guf}) that the Hilbert scheme $\mathcal{H}^S_{g,d}$ parametrising the smooth curves of such genus~$g$ and degree~$d$ in $\p^3$ is irreducible. We denote by $W$ the projective space parametrising cubics of $\p^3$, and consider the closed subset $Z\subset \mathcal{H}^S_{g,d} \times W$ consisting of pairs $(D,S)$ where $D\subset S$. 
We then denote by $Z_3\subset Z$ the subset of pairs $(D,S)$ such that $D$ admits a $3$-secant line contained in $S$. 
The set $Z_3$ is closed in $Z$, and it remains to show that $Z_3=Z$. 

Denote by $U\subset Z$ the open set of pairs $(D,S)$ where $S$ is a smooth cubic.  Lemma~\ref{Lemm:smoothcubics} implies that $U\subset Z_3$. It remains then to see that $Z$ is irreducible. Since $W$ is projective, the projection $\mathcal{H}^S_{g,d} \times W\to \mathcal{H}^S_{g,d}$ is closed. The restriction to the closed subset $Z$ is then a closed surjective morphism $Z\to \mathcal{H}^S_{g,d}$. The image being irreducible and the fibres too (it is a linear subspace of $W$), the variety $Z$ is also irreducible.
\end{proof} 

\subsection{Summary of necessary conditions}\label{Sec:SummaryNec}

We can now summarise the results obtain in $\S\ref{Sec:Preliminaries}$--$\S\ref{Sec:hyperplane}$ and obtain a part of the proof of Theorem~\ref{Thm:WeakFano}$\ref{case:thm1}$.
\begin{prop}\label{Prop:NecessaryConditionsPropre}
Let $Y\subset \p^4$ be a smooth cubic threefold and let $C\subset Y$ be a smooth curve of type $(g,d)$ such that the blow-up $X\to Y$ of~$Y$ along $C$ is weak-Fano. Then, the following hold:
\begin{enumerate}
\item
If $C$ is contained in a hyperplane section, then $(g,d)\in \Lp$.
\item
If $C$ is not contained in a hyperplane section, then $(g,d)\in \Lq$.
\end{enumerate}
Moreover, there is no $3$-secant line to $C$ in $Y$.
\end{prop}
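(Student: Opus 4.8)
The plan is to assemble Proposition~\ref{Prop:NecessaryConditionsPropre} directly from the pieces already established in $\S\ref{Sec:Preliminaries}$--$\S\ref{Sec:hyperplane}$, so the ``proof'' is mostly bookkeeping. First I would invoke Proposition~\ref{Prop:12 possibilities}: since $X$ is weak-Fano, the type $(g,d)$ lies in $\Lp\cup\Lq\cup\{(2,5),(3,6)\}$ and in particular $d\le 6$. Next I would use Proposition~\ref{Prop:Always3secant}\ref{case:secant1} to discard the two exceptional types: if $(g,d)\in\{(2,5),(3,6)\}$, then $C$ admits a $3$-secant line, so by Lemma~\ref{Lemm:fano1} (the $n=m=3$ case, giving $-K_X\cdot\tilde\Gamma\le 2\cdot 3-3=3$ on a line, wait --- more precisely a $3$-secant line $\ell$ has $-K_X\cdot\tilde\ell\le 2-3=-1<0$) $-K_X$ is not nef, contradicting that $X$ is weak-Fano. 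Hence $(g,d)\in\Lp\cup\Lq$.

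It then remains to match the dichotomy ``in a hyperplane section or not'' with ``$\Lp$ or $\Lq$''. For the forward direction in case~(1): if $C$ lies in a hyperplane section and $(g,d)\in\Lp\cup\Lq$, I must rule out $(g,d)\in\Lq$. This is exactly Proposition~\ref{Prop:Always3secant}\ref{case:secant2}, which produces a $3$-secant line inside the hyperplane section $S$ whenever $(g,d)\in\Lq$ and $C\subset S$; again this contradicts weak-Fano by Lemma~\ref{Lemm:fano1}. Therefore $C$ in a hyperplane section forces $(g,d)\in\Lp$. Conversely, for case~(2): if $C$ is not contained in a hyperplane section, then since $(g,d)\in\Lp\cup\Lq$ and every type in $\Lp$ does lie in a hyperplane section --- here I would cite Lemma~\ref{Lemm:hypersurfaces}\ref{item:planelist}, which asserts precisely that a smooth curve of type in $\Lp\cup\{(2,5),(3,6)\}$ is contained in a hyperplane section --- we are forced into $(g,d)\in\Lq$.

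Finally, for the ``moreover'' clause asserting no $3$-secant line to $C$ in $Y$: this is immediate from Lemma~\ref{Lemm:fano1}. If $\ell$ were a $3$-secant line ($n=1$, $m=3$) then $-K_X\cdot\tilde\ell\le 2-3=-1<0$, so $-K_X$ would fail to be nef, contradicting the hypothesis that $X$ is weak-Fano. (The degenerate case where $\ell=C$ is itself a line is excluded since then $d=1$ and $C$ has no proper $3$-secant line, and anyway type $(0,1)\in\Lp$ causes no issue.)

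I do not expect a genuine obstacle here, since all the hard work was done in the earlier propositions; the only thing requiring a little care is making sure the logical implications are stated in the right direction --- in particular that Lemma~\ref{Lemm:hypersurfaces}\ref{item:planelist} is used to handle the ``not in a hyperplane $\Rightarrow$ not in $\Lp$'' step, and that Proposition~\ref{Prop:Always3secant} (rather than a direct computation) is what eliminates $(2,5)$, $(3,6)$ and the $\Lq$-in-a-hyperplane possibilities. The write-up will simply be a short paragraph chaining these citations together.
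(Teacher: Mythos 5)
Your proposal is correct and follows essentially the same route as the paper: it chains Proposition~\ref{Prop:12 possibilities}, Proposition~\ref{Prop:Always3secant}, and Lemma~\ref{Lemm:hypersurfaces}\ref{item:planelist}, and disposes of the $3$-secant line via the negativity of $-K_X$ on its strict transform, exactly as in the paper's argument. The only (immaterial) difference is that you eliminate $(2,5)$ and $(3,6)$ up front via Proposition~\ref{Prop:Always3secant}\ref{case:secant1}, whereas the paper folds them into the hyperplane/no-hyperplane dichotomy.
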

\begin{proof}
According to Proposition~\ref{Prop:12 possibilities}, the pair $(g,d)$ belongs to $\Lp\cup \Lq \cup \{(2,5),(3,6)\}$.
If $C$ is contained in a hyperplane section, Proposition~\ref{Prop:Always3secant} shows that $(g,d)\notin \Lq\cup \{(2,5),(3,6)\}$.
If $C$ is not contained in a hyperplane section, $(g,d)\notin \Lp\cup \{(2,5),(3,6)\}$ by Lemma~\ref{Lemm:hypersurfaces}\ref{item:planelist}. Moreover, there is no $3$-secant line to $C$ in $Y$ since the strict transform of such a curve would intersect $-K_X$ negatively.\end{proof}

\section{Curves of type $(g,d)\in\Lp\cup \Lq$ and proof of Theorem~\ref{Thm:WeakFano}}\label{Sec:ProofofTheoremA}

In Section~\ref{Sec:NecessarySection}, we showed that curves that yield a weak-Fano threefold are of type $(g,d)\in\Lp\cup \Lq$, do not have any $3$-secant line and are contained in a hyperplane section if and only if $(g,d)\in \Lp$. In this section, we do the converse and show that each curve that satisfies these conditions yield a weak-Fano threefold. This will lead to the proof of Theorem~\ref{Thm:WeakFano}, done in $\S\ref{Sec:SummaryProof}$--$\ref{Sec:links}$.

\subsection{Fano case}\label{Sec:FanoCase}
First we treat the two elementary cases $(g,d)\in \{(0,1),(1,3)\}$, which yield Fano threefolds.

\begin{lemm}\label{Lemm:fano2}
Let $C \subset Y \subset \p^4$ be a smooth curve in a smooth cubic threefold, and consider $\pi\colon X \to Y$ the blow-up of $C$.
Suppose that
\begin{enumerate}
\item the linear system $\lvert-K_X\rvert$ is non empty and has no fixed component;
\item $(-K_X)^3 > 0$;
\item $-K_X\cdot \Gamma > 0$ for any curve $\Gamma \subset X$. 
\end{enumerate}
Then $X$ is a Fano threefold. 
\end{lemm}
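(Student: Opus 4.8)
The plan is to verify the Nakai--Moishezon ampleness criterion for $-K_X$ on the smooth projective threefold $X$: one needs $(-K_X)^3>0$, $(-K_X)^2\cdot S>0$ for every irreducible surface $S\subset X$, and $-K_X\cdot\Gamma>0$ for every irreducible curve $\Gamma\subset X$. The first and third of these are hypotheses (2) and (3), so the real work is the middle inequality.

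First I would note that hypothesis (3) already makes $-K_X$ nef, since nefness only asks for non-negativity against curves; in particular $(-K_X)^2\cdot S\ge 0$ for every irreducible surface $S$, and it remains to exclude equality.

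So fix an irreducible surface $S\subset X$. Using hypothesis (1), I would pick a member $D\in\lvert -K_X\rvert$ not containing $S$ as a component: a general member works when $\dim\lvert -K_X\rvert\ge 1$, and when $\dim\lvert -K_X\rvert=0$ the unique member cannot contain $S$, for $S$ would then be a fixed component. Then $N:=D|_S$ is an effective divisor on $S$ representing $(-K_X)|_S$, so $N^2=(-K_X)^2\cdot S$. If $N=0$ then $-K_X\cdot\Gamma=N\cdot\Gamma=0$ for any curve $\Gamma\subset S$ (and such curves exist), contradicting (3); hence $N\ne 0$, say $N=\sum_i a_i\Gamma_i$ with $a_i>0$ and the $\Gamma_i$ irreducible curves on $S$. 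Nefness of $-K_X$ gives $N\cdot\Gamma_i\ge 0$, and by the projection formula $N\cdot\Gamma_i=-K_X\cdot\Gamma_i$, which is $>0$ by (3). Therefore $(-K_X)^2\cdot S=N^2=\sum_i a_i\,(-K_X\cdot\Gamma_i)>0$.

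With all three inequalities in hand, Nakai--Moishezon gives that $-K_X$ is ample, i.e.\ $X$ is Fano. The one point requiring care is the use of hypothesis (1): it is exactly what guarantees that the restricted class $(-K_X)|_S$ is represented by an honest nonzero effective $1$-cycle on $S$, rather than having $S$ disappear into a fixed component; the rest is formal. (One could also argue via the base-point-free theorem: $-K_X$ is nef by (3) and big by (2), so $\lvert -mK_X\rvert$ is base-point-free for $m\gg 0$ and the resulting morphism contracts no curve by (3), hence is finite, so $-K_X$ is ample.)
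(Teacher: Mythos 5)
Your proof is correct and follows essentially the same route as the paper's: apply the Nakai--Moishezon criterion and dispose of the surface inequality $(-K_X)^2\cdot S>0$ by using hypothesis (1) to represent $(-K_X)|_S$ by a nonzero effective $1$-cycle, to which hypothesis (3) applies. You simply spell out two details the paper leaves implicit (choosing a member of $\lvert -K_X\rvert$ not containing $S$, and excluding the case where the restricted cycle is empty), which is a welcome but not substantively different elaboration.
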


\begin{proof}
The Nakai-Moishezon criterion (see \cite{Laz1}) asserts that $-K_X$ is ample if and only if $(-K_X)^3 > 0$, $(-K_X)^2\cdot S > 0$ and $-K_X\cdot \Gamma > 0$ for any surface $S \subset X$ and curve $\Gamma \subset X$.
But since we assume $\lvert-K_X\rvert$ without fixed component, $-K_X\cdot S$ is equivalent to an effective $1$-cycle, hence the condition on curves is enough.
\end{proof}

\begin{prop} \label{Prop:Fano}
Let $C \subset Y \subset \p^4$ be a smooth curve of genus~$g$ and degree~$d$ in a smooth cubic threefold, and let $\pi\colon X \to Y$ the blow-up of $C$.
If $(g,d) = (0,1)$ or $(1,3)$, then $X$ is a Fano threefold.
\end{prop}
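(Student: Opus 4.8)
The plan is to verify the three hypotheses of Lemma~\ref{Lemm:fano2} in each of the two cases $(g,d)=(0,1)$ and $(g,d)=(1,3)$. The first hypothesis is that $\lvert -K_X\rvert$ is nonempty with no fixed component; since $-K_X\sim \pi^*(2H)-E$, sections correspond to hyperquadric sections of $Y$ passing through $C$, and by Lemma~\ref{Lemm:hypersurfaces}\ref{item:quadric} (noting $d>g-1$ in both cases) the curve $C$ lies in a linear system of hyperquadric sections of projective dimension at least $13-2d+g$, which is $\ge 3$ here; one then checks this system has no fixed component because $C$ is not contained in a plane (a line in case $(0,1)$, a plane cubic in case $(1,3)$, both contained in no plane inside $Y$ by the classical fact that $Y$ contains no plane) so no hyperplane can be a common component — actually more simply, the base locus of the system of hyperquadrics through $C$ is one-dimensional since $C$ moves, so no surface is fixed. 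The second hypothesis, $(-K_X)^3>0$, is immediate from Lemma~\ref{Lemm:K^3}: it holds iff $2(d-5)\le g$, and $(0,1)$, $(1,3)$ both satisfy this (indeed $(-K_X)^3 = 22-4d+2g$ equals $18$ and $12$ respectively).

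The real content is the third hypothesis: $-K_X\cdot\Gamma>0$ for every irreducible curve $\Gamma\subset X$. I would split according to whether $\Gamma\subset E$ or not. If $\Gamma$ is not contained in $E$, it is the strict transform of a curve $\Gamma_0\subset Y$ of some degree $n\ge 1$ that is $m$-secant to $C$, and Lemma~\ref{Lemm:fano1} gives $-K_X\cdot\Gamma\le 2n-m$; but I need a \emph{lower} bound, so instead I use $-K_X\cdot\Gamma = 2n-m$ exactly when $\Gamma_0$ is not one of the contracted curves — more precisely $-K_X\cdot\tilde\Gamma_0 = \pi^*(2H)\cdot\tilde\Gamma_0 - E\cdot\tilde\Gamma_0 = 2n - (E\cdot\tilde\Gamma_0)$, and $E\cdot\tilde\Gamma_0$ is exactly the number of intersection points (with multiplicity) of $\Gamma_0$ with $C$. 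So I must rule out curves $\Gamma_0\subset Y$ with $E\cdot\tilde\Gamma_0\ge 2n$, i.e. with $2n$ or more intersections with $C$. For $n=1$: a line meeting $C$ at $\ge 2$ points is a $2$-secant line; by Lemma~\ref{Lemm:HRS4.2}, $N = \tfrac{5d(d-3)}{2}+6-6g$ is the number of $2$-secant lines, which gives $N=-4$ for $(0,1)$ (impossible, so no $2$-secant line — interpreting the line-to-itself convention correctly) and $N=-3$ for $(1,3)$ — wait, I should recompute: for $(1,3)$, $N = \tfrac{5\cdot 3\cdot 0}{2}+6-6 = 0$, so there is no $2$-secant line; for $(0,1)$, $N = \tfrac{5\cdot 1\cdot(-2)}{2}+6-0 = 1$, which by the stated convention is the line $C$ itself counted as $2$-secant to itself, not a genuine obstruction. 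For higher $n$ one uses that $C$ has very small degree, so a curve $\Gamma_0$ of degree $n$ meeting it $\ge 2n$ times would force geometric constraints (e.g. for $(1,3)$ the plane containing $C$ would contain $\Gamma_0$, contradicting that $Y$ contains no plane, or $\Gamma_0$ would be forced into low degree where one checks directly). I expect the case analysis here is short precisely because $d\le 3$.

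If $\Gamma\subset E$, then $\Gamma$ is a curve in the exceptional ruled surface $E\to C$; here I use $K_X\cdot E = \pi^*K_Y\cdot E + E^2$ and compute $-K_X\cdot\Gamma$ via the structure of $E\cong\p(\mathcal{N}^\vee_{C/Y})$. Concretely $-K_X|_E = \pi^*(2H)|_E - E|_E$, and on fibres $f$ of $E\to C$ one has $\pi^*(2H)\cdot f = 0$ and $(-E)\cdot f = 1$, so $-K_X\cdot f = 1 > 0$; for a section or multisection $\Gamma$ of $E\to C$ one reduces to the fibre class plus a section class and uses that $-K_X$ is $\pi^*$-nef plus a positive term, or more directly that $E\cdot\tilde C' $ computations combined with the fact $-K_X\cdot\Gamma$ is a nonnegative integer combination forcing positivity. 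This last part — positivity on curves inside $E$ — is the step I would be most careful about, though for $g=0$ ($C=$ line) $E$ is a Hirzebruch surface and the computation is completely explicit, and for $g=1$ one knows $\mathcal{N}_{C/Y}$ well enough (degree $-K_Y\cdot C - (2-2g) = 6-0 = 6$ for $(1,3)$, so $\deg\mathcal{N} = -K_Y\cdot C = 6$, giving a ruled surface over the elliptic curve whose positive cone is understood). Once all three hypotheses are checked, Lemma~\ref{Lemm:fano2} yields that $X$ is Fano, completing the proof.

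Mais j'anticipe que l'obstacle principal est le contrôle des courbes $\Gamma_0\subset Y$ de degré $n\ge 2$ rencontrant $C$ au moins $2n$ fois : il faut combiner la petitesse du degré de $C$ avec l'absence de plan dans $Y$ et éventuellement les bornes de Castelnuovo (Corollaire~\ref{Coro:Castelnuovo34}) pour conclure qu'aucune telle courbe n'existe.
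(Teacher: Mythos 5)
Your overall strategy (verify the three hypotheses of Lemma~\ref{Lemm:fano2} directly, with a case analysis of curves) is viable and your numerical checks are all correct, but it is genuinely different from the paper's argument, which replaces your entire treatment of hypothesis (3) by a Mori-cone observation: besides $\pi$, the threefold $X$ carries a second extremal contraction $\sigma$ --- the conic bundle $X\to\p^2$ given by the planes through the line in case $(0,1)$, and the degree-$3$ Del Pezzo fibration $X\to\p^1$ given by the hyperplanes through the plane of $C$ in case $(1,3)$. Since $\rho(X)=2$, the cone $\NE(X)$ is two-dimensional and spanned by the classes of curves contracted by $\pi$ and by $\sigma$, and $-K_X$ is visibly positive on these (fibres of $E$, residual conics, curves inside a hyperplane section). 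This buys a uniform two-line verification of nefness with no secant-curve analysis and no normal-bundle computation; your route buys nothing extra here, only more work.

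Your route can be completed, but the two steps you flag yourself are not yet proofs, and one of your proposed tools is the wrong one. For curves $\Gamma_0\not\subset E$ of degree $n$, Castelnuovo bounds are not what is needed: the clean argument is that any hyperplane $H$ containing $C$ but not $\Gamma_0$ gives $E\cdot\tilde\Gamma_0\le H\cdot\Gamma_0=n$, and a curve contained in \emph{every} hyperplane through $C$ lies in the linear span of $C$ (a line, resp.\ a plane $P$ with $P\cap Y=C$), hence equals $C$; this yields $-K_X\cdot\tilde\Gamma_0\ge 2n-n=n>0$ for all $n\ge 1$ at once, and also disposes of the $n=1$ case without Lemma~\ref{Lemm:HRS4.2} (whose finiteness hypothesis you did not check, and whose output $N=1$ for a line you must explain away by convention). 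For curves inside $E$, positivity on sections and multisections genuinely requires knowing $\mathcal{N}_{C/Y}$. In case $(1,3)$ this is easy since $C$ is the complete intersection of two hyperplane sections of $Y$, so $\mathcal{N}_{C/Y}\cong\mathcal{O}_C(1)^{\oplus 2}$ and any section $s$ satisfies $-K_X\cdot s\ge 6-3=3$. In case $(0,1)$ you need the classification of normal bundles of lines on a smooth cubic threefold, namely $\mathcal{O}\oplus\mathcal{O}(-1)$ or $\mathcal{O}(1)\oplus\mathcal{O}(-2)$; in the second case the minimal section of $E\cong\mathbb{F}_3$ has $-K_X\cdot s=2-1=1$, so positivity holds but is tight, and ``the computation is completely explicit'' does need to be carried out rather than asserted.
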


\begin{proof}
In both cases, $X$ is a threefold with Picard number~$2$, which admits a fibration $\sigma\colon X \to Z$.
Precisely, when $C$ is a line, consider the family of 2-dimensional plane of $\p^4$ containing $C$. 
Each such plane defines a residual conic on $Y$, and this family is parametrised by $\p^2$.
So here $Z = \p^2$, and $\sigma$ is a conic bundle.
On the other hand, if $C$ is a plane elliptic curve, $C$ is the complete intersection of two hyperplanes sections on $Y$. 
The family of hyperplane sections containing $C$ (or equivalently, containing the $2$-dimensional plane containing $C$) is parametrised by $Z = \p^1$, and here $\sigma$ is a Del Pezzo fibration of degree $3$.

The class of a curve contracted by $\pi$ or $\sigma$ is extremal in the cone $\NE(X)$ of effective curves on $X$, and since this cone is 2-dimensional, such curves generate $\NE(X)$.
Moreover any such curve $\Gamma$ (a fibre of the exceptional divisor, a residual conic or a curve in a hyperlane section) satisfies~$-K_X\cdot\Gamma > 0$.
The linear system $\lvert -K_X \rvert$ is base-point free in both cases, and by Lemma~\ref{Lemm:K^3} we have $(-K_X)^3 > 0$.
So we conclude by Lemma~\ref{Lemm:fano2}.
\end{proof}

\subsection{Existence of smooth hyperquadric sections}\label{Sec:smooth_sections}
In this section, we take a smooth curve $C$ of type $(g,d)$, either with $(g,d) \in \Lp$, or with  $(g,d) \in \Lq$ and the extra assumptions that $C$ is not in a hyperplane and does not have a $3$-secant line (in the case $(0,5)$). 
Then, we shall show that $C$ is always contained in a smooth  hyperquadric section,  that the blow-up of $C$ is weak-Fano, and has an anti-canonical linear system without base-point.\\

First we treat two special cases, which correspond to the complete intersections of~$Y$ with two hyperplanes or with a hyperplane and a quadric.

\begin{prop} \label{Prop:cases 13 and 46}
Let $C \subset Y$ be a smooth curve of genus~$g$ and degree~$d$ in a smooth cubic threefold, with $(g,d)\in \{(1,3),(4,6)\}.$
Denoting by $X\to Y$ the blow-up of~$Y$ along $C$, the following assertions hold:
\begin{enumerate}
\item \label{case1:cases13and46}
The linear system of hyperquadric sections of~$Y$ through $C$ has no base-point outside $C$ and has a general member which is smooth.
\item \label{case2:cases13and46}
The linear system $\lvert -K_X\rvert$ has no base-point and has a general member which is smooth.
\item \label{case3:cases13and46}
The threefold $X$ is weak-Fano.
\end{enumerate}
\end{prop}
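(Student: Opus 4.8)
The plan is to treat the two cases $(g,d)=(1,3)$ and $(4,6)$ by exploiting that $C$ is a complete intersection: in the first case $C = Y \cap H_1 \cap H_2$ for two hyperplanes, and in the second $C = Y \cap H \cap Q$ for a hyperplane $H$ and a quadric $Q$. This is forced by Lemma~\ref{Lemm:hypersurfaces}: a plane elliptic curve lies on a pencil of hyperplanes whose common $2$-plane meets $Y$ in $C$, and a $(4,6)$-curve lies in a unique hyperplane section $S$ (as $d>3$) and, by the dimension count of Lemma~\ref{Lemm:hypersurfaces}\ref{item:quadric}, on at least one quadric; since $\deg C = 6 = \deg(S\cap Q')$ for the induced quadric section, $C$ is exactly $S\cap Q'$.

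First I would establish~\ref{case1:cases13and46}. For $(1,3)$: the hyperquadric sections through $C$ contain in particular all reducible ones of the form $H_1\cup Q$ and $H_2 \cup Q$ with $Q$ an arbitrary hyperplane; the base locus of these is $(H_1\cup H_2)\cap(\text{all }Q) \cap Y = C$, so there are no base points off $C$, and a Bertini-type argument on the subsystem of smooth members (or direct verification that a generic quadric $Q_0$ gives $Y\cap Q_0$ smooth away from its intersection with the $2$-plane, then patching) yields a smooth general member. For $(4,6)$: the hyperquadric sections through $C$ include $H\cup Q''$ for every hyperplane $Q''$, and $H\cup Q'$ type members are subsumed; the relevant point is that the system also contains $Y\cap Q$ for quadrics $Q$ near $Q'$ not containing $H$, and one checks the base locus of the whole system is $C$ and that the generic member is smooth by Bertini applied after removing the fixed part $C$ from the base locus. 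Then~\ref{case2:cases13and46} follows because $|-K_X|$ is precisely the strict transform of this linear system under $\pi$ (the blow-up resolves the base locus along $C$), so base-point-freeness and smoothness of the general member transfer directly, the general member being a K3 surface (the strict transform of a smooth hyperquadric section of $Y$ containing $C$).

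Finally~\ref{case3:cases13and46}: since $|-K_X|$ is base-point-free by~\ref{case2:cases13and46}, $-K_X$ is nef; and $(-K_X)^3 = 22-4d+2g > 0$ by Lemma~\ref{Lemm:K^3} (for $(1,3)$ this is $12$, for $(4,6)$ this is $6$), so $-K_X$ is big. Hence $X$ is weak-Fano. The main obstacle I anticipate is the smoothness-of-the-general-member part of~\ref{case1:cases13and46}: one cannot apply Bertini naively because the linear system has $C$ in its base locus, so the argument must either pass to the blow-up first (proving~\ref{case2:cases13and46} and~\ref{case1:cases13and46} together, using that the moving part on $X$ is base-point-free hence has smooth general member by Bertini, then pushing forward) or exhibit an explicit smooth member in coordinates; the former seems cleanest. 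A secondary subtlety is checking that the general smooth hyperquadric section through $C$ is genuinely irreducible and not, say, forced to contain the ambient hyperplane — but in both cases the presence of members of the form $Y\cap Q$ with $Q$ an irreducible quadric not containing any hyperplane through $C$ settles this.
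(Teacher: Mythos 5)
Your overall strategy coincides with the paper's: identify $C$ as the complete intersection $Y\cap H_1\cap H_2$ (resp.\ $Y\cap H\cap W$ with $W$ a quadric), compute the base locus of the system $\Lambda$ of quadrics through $C$ using its reducible members $H'+H''$, conclude that $\Lambda_Y$ has no base point outside $C$, apply Bertini away from $C$, and finish with nefness together with $(-K_X)^3=22-4d+2g>0$. Up to that point the proposal is sound and matches the paper.

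The genuine gap sits exactly at the point you flag as the ``main obstacle'', and your preferred resolution does not close it. Saying ``pass to the blow-up first, using that the moving part on $X$ is base-point-free'' assumes assertion~\ref{case2:cases13and46} rather than proving it: knowing that the base locus of $\Lambda_Y$ equals $C$ as a set does \emph{not} imply that $\lvert -K_X\rvert=\lvert 2\pi^*H-E\rvert$ is base-point-free on the exceptional divisor $E$. The system could a priori have base points on $E$ --- for instance if every member of $\Lambda_Y$ were singular at some point of $C$, or if all members smooth at a point $p\in C$ shared the same tangent hyperplane there. What has to be shown is (i) that some member of $\Lambda_Y$ is smooth along all of $C$, and (ii) that for each $p\in C$ the tangent hyperplanes at $p$ of members of $\Lambda$ sweep out all hyperplanes of $T_p\p^4$ containing $T_pS$, so that no point of the fibre of $E$ over $p$ is a common base point. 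The paper carries this out by the second route you mention, namely explicit coordinates: for $(1,3)$ it takes $H_1\colon v=0$, $H_2\colon w=0$ and exhibits the quadrics $vz+wy=0$ and $vy+\mu wz=0$, checking smoothness along $C$ and the variation of tangents using transversality of $Y$ to the plane $S$; for $(4,6)$ it uses that $W$ is transversal to $Y$ along $C$ (because $C=H\cap W\cap Y$ is smooth), so that $W|_Y$ is a member smooth along $C$, while the members $H+H'$ with $H'$ avoiding $p$ supply the remaining tangent directions. Without this step, \ref{case2:cases13and46}, and hence the nefness of $-K_X$ in \ref{case3:cases13and46}, is not established.
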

 
\begin{proof}
By Lemma~\ref{Lemm:hypersurfaces}, the curve $C$ is contained in a surface $S=H\cap W$, where $H\subset \p^4$ is a hyperplane and $W\subset \p^4$ is a hyperplane or a quadric, in case $(g,d)=(1,3)$ or $(g,d)=(4,6)$ respectively.

Since $S\cap Y$ has degree~$d$, we have $C=S\cap Y$. This implies in particular that any quadric of $\p^4$ passing through $C$ also contains $S$, but also, since $C$ is smooth, that the intersection $H\cap W \cap Y$ is transversal at each point of $C$. 
In particular, $W$ is smooth along $C$.

Let $\Lambda$ be the linear system of quadrics of $\p^4$ passing through $C$. We now show that the base-locus of $\Lambda$ is equal to $S$. If $(g,d)=(1,3)$ this is because $\Lambda$ contains all elements of the form $H'+H''$, where $H',H''$ are two hyperplanes, one of them containing the plane $S$. If $(g,d)=(4,6)$, this is because it contains $W$ and all elements of the form $H+H'$, where $H'$ is any hyperplane.

In particular, the linear system $\Lambda_Y=\Lambda|_Y$ of hyperquadric sections of~$Y$ containing $C$ has no base-point outside $C=S\cap Y$. 
By Bertini's Theorem, this implies that a general member of $\Lambda_Y$ is smooth outside of $C$, and then that a general member of $\lvert -K_X\rvert$ is smooth outside of the exceptional divisor. It remains to show that a general member of $\Lambda_Y$ is in fact smooth at every point of $C$, and that $\lvert -K_X\rvert$ has no base-point. This will yield \ref{case2:cases13and46} and that $-K_X$ is nef, hence big since $(-K_X)^3=22-4d+2g>0$, which implies \ref{case3:cases13and46}.

Suppose that $(g,d)=(1,3)$, and take coordinates $[v:w:x:y:z]$ on $\p^4$ such that $H$ and $W$ are given by $v=0$ and $w=0$ respectively. We can assume that $[0:0:1:0:0]\notin C$, and see that the quadric $Q\subset \p^4$ given by $vz+wy=0$ has tangent $bw+cv=0$ at a point $[0:0:a:b:c]$. In particular $Q\cap Y$ is smooth at any point of $C$ since $Y$ is transversal to the plane $S$ given by $v=w=0$. This provides a member of $\Lambda_Y$ which is smooth along $C$. Moreover, changing the equation of $Q$ (by taking $vy+\mu wz=0$, $\mu\in \C$ for instance), the tangent of the elements of $\Lambda$ at a point $p\in C$ cover all hyperplanes of $T_p\p^4$ containing $T_p S$. Again, as $Y$ is transversal to $C$, this implies that $\lvert -K_X\rvert$ has no base-point on the fibre above $p$.

The remaining case is $(g,d)=(4,6)$. In this case, $W$ is a quadric transversal to $Y$ along $C$, so $W|_Y$ yields a member of $\Lambda_Y$ which is smooth along $C$. Fixing a point $p\in C$, we can take elements of the form $H+H'$, where $H'$ is a hyperplane away from $p$, and obtain an element of $\Lambda_Y$ that is smooth at~$p$, with tangent corresponding to $H$. Since $C=H\cap W\cap Y$ is smooth at~$p$, this implies that the system $\lvert-K_X\rvert$ has no base-point above $p$.\end{proof}

For the other cases, our strategy consists of showing first that the curve is contained in the smooth intersection of two quadrics of $\p^4$, that gives a Del Pezzo surface of degree $4$ (Proposition~\ref{Prop:DelPezzo4}). This will be used to show the existence of smooth hyperquadric sections in Proposition~\ref{Prop:smoothgeneral}.

In order to show the existence of the Del Pezzo surface, we first need two lemmas about the possible embeddings of curves from the list $\Lq$ in $\p^4$.

\begin{lemm} \label{Lemm:DelPezzo4easy}
Let $C, \tilde C \subset \p^4$ be smooth curves of genus~$g$ and degree~$d$ and let $H$ be a general hyperplane section of $C$. 
We assume:
\begin{enumerate}
\item Either $(g,d)=(1,4)$, or $(g,d)\in\{\, (1,5),\, (2,6)\,\}$ and $C$ is not contained in a hyperplane.
\item There exists an isomorphism $\phi\colon C \to \tilde C$ that sends $H$ to a hyperplane section of $\tilde C$.
\end{enumerate}
Then there exists an isomorphism of $\p^4$ that extends $\phi$.
\end{lemm}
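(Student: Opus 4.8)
Lemma~\ref{Lemm:DelPezzo4easy} asserts that an isomorphism between two curves in $\p^4$ compatible with hyperplane sections extends to a projective automorphism.

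\textbf{Proof proposal.} The plan is to reduce the statement to the fact that each of the curves $C,\tilde C$ is cut out by the \emph{complete} linear system of its hyperplane class, and then to convert the isomorphism $\phi$ into a linear isomorphism by pulling back linear forms.

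First I would compute $h^0(C,\mathcal{O}_C(1))$. Writing $H$ for the hyperplane divisor on $C$, in all three cases $\deg(K_C-H)=2g-2-d<0$, so $\ell(K_C-H)=0$ and Riemann--Roch gives $\ell(H)=d+1-g$, which equals $5$ if $(g,d)\in\{(1,5),(2,6)\}$ and $4$ if $(g,d)=(1,4)$. In the cases $(1,5)$ and $(2,6)$ the curve $C$ is, by hypothesis, not contained in a hyperplane, so the restriction map $H^0(\p^4,\mathcal{O}(1))\to H^0(C,\mathcal{O}_C(1))$ is injective; both spaces being $5$-dimensional, it is an isomorphism, i.e.\ $C$ is linearly normal and its embedding into $\p^4$ is the one attached to $\lvert\mathcal{O}_C(1)\rvert$. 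In the case $(1,4)$, Castelnuovo's bound (Proposition~\ref{Prop:CastelnuovoBound}) shows that a non-degenerate curve of degree~$4$ in $\p^4$ has genus~$0$, so our $C$ spans only a hyperplane $\p^3\subset\p^4$, inside which it is embedded by the complete $4$-dimensional system $\lvert\mathcal{O}_C(1)\rvert$. The same applies to $\tilde C$.

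Next, from the hypothesis that $\phi$ maps the hyperplane section $H$ of $C$ to a hyperplane section $D_0\in\lvert\mathcal{O}_{\tilde C}(1)\rvert$ of $\tilde C$, I get $\phi^*D_0=H$ and hence $\phi^*\mathcal{O}_{\tilde C}(1)=\mathcal{O}_C(H)=\mathcal{O}_C(1)$. Therefore $\phi$ induces by pullback of sections a linear isomorphism $\phi^*\colon H^0(\tilde C,\mathcal{O}_{\tilde C}(1))\xrightarrow{\ \sim\ }H^0(C,\mathcal{O}_C(1))$. Composing the restriction isomorphism for $\tilde C$, this $\phi^*$, and the inverse of the restriction isomorphism for $C$ gives a linear automorphism of the space of linear forms on $\p^4$ (in the case $(1,4)$, first of the space of linear forms on the spanning $\p^3$, then extended arbitrarily to $\p^4$), hence a projective automorphism $\Phi\in\PGL_5(\C)=\Aut(\p^4)$. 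Since both embeddings are given by evaluating sections of the line bundle $\mathcal{O}_C(1)$, resp.\ $\mathcal{O}_{\tilde C}(1)$, and these evaluation maps are intertwined by $\phi^*$, the automorphism $\Phi$ restricts to $\phi$ on $C$, as required.

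I do not expect a serious obstacle here. The real content is the linear normality of $C$, which drops straight out of Riemann--Roch and the non-degeneracy hypotheses (and of Castelnuovo's bound in the degree-$4$ case); the remainder is the standard dictionary between embeddings by complete linear systems and projective-linear algebra. The only mildly delicate point is the final check that $\Phi$ induces exactly $\phi$ rather than some other identification, which is handled by tracking the evaluation maps, and the non-uniqueness of $\Phi$ in the case $(1,4)$ is harmless since only existence is claimed.
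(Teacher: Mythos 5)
Your proposal is correct and follows essentially the same route as the paper: Riemann--Roch gives $\ell(H)=d+1-g$, non-degeneracy (or, for $(1,4)$, degeneracy into a unique $\p^3$) gives linear normality, and the induced identification of the complete linear systems $\lvert H\rvert$ and $\lvert \phi(H)\rvert$ yields the projective automorphism. Your version merely spells out in the language of line bundles and evaluation maps what the paper states as the correspondence $H'\mapsto\phi(H')$ between hyperplane systems.
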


\begin{proof}
By Riemann-Roch's formula we have $\lvert H\rvert \simeq \p^{d-g}$ on $C$.
In particular in cases $(g,d) = (1,5)$ or $(2,6)$ we have a one-to-one correspondence between elements of $\lvert H\rvert$ and hyperplanes in $\p^4$, since  we assume that $C$ is not contained in a hyperplane.
Thus the isomorphism $H' \in \lvert H\rvert \mapsto \phi(H') \in \lvert \phi(H)\rvert $ corresponds to the expected automorphism of $\p^4$.

In the case $(g,d) = (1,4)$ this is the same argument, working with the three-dimensional system of planes in the unique $\p^3$ containing $C$.
\end{proof}

The case of a curve of type $(0,5)$ needs more assumptions, since in this case the restriction of the linear system of hyperplanes is not complete. 
We will also need Lemma~\ref{Lemm:HRS9.3}, which says that each such curve admits a unique $3$-secant in $\p^4$.

\begin{lemm}\label{Lemm:DelPezzo4hard}
Let $C, \tilde C \subset \p^4$ be smooth rational curves of degree $5$, and let $\phi \colon C \to \tilde C$ be an isomorphism.
Let $M, \tilde M$ be the $3$-secant lines to $C$ and $\tilde C$ respectively, and let $L, \tilde L$ be $2$-secant lines disjoint from $M$ and $\tilde M$. 
Denote by $\pi, \tilde \pi\colon \p^4 \dasharrow \p^2$ the projections from $L$ and $\tilde L$.
Assume
\begin{enumerate}
\item $\pi|_{C} = \tilde \pi \circ \phi$;
\item $\phi (M \cap C) = \tilde M \cap \tilde C$ and $\phi (L \cap C) = \tilde L \cap \tilde C$.
\end{enumerate}
Then there exists an automorphism of $\p^4$ that extends $\phi$.
\end{lemm}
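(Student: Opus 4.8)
The plan is to recover, from the data in the hypotheses, the linear subspace of $H^{0}(\p^1,\mathcal{O}(5))$ that cuts out the embedding $C\hookrightarrow\p^4$. Using $\phi$ to identify $C$ and $\tilde C$ with $\p^1$, the line bundles $\mathcal{O}_{C}(1)$ and $\phi^{*}\mathcal{O}_{\tilde C}(1)$ are both isomorphic to $\mathcal{O}_{\p^1}(5)$; after fixing such an isomorphism, the two embeddings into $\p^4$ correspond to two hyperplanes $V$ and $\widetilde V$ of the $6$-dimensional space $H^{0}(\p^1,\mathcal{O}(5))$. It suffices to prove $V=\widetilde V$, since the two embeddings then differ by a projectivity of $\p^4$ restricting to $\phi$.

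To describe $V$, set $L_{C}=L\cap C$ and $M_{C}=M\cap C$, subschemes of $C$ of lengths $2$ and $3$, disjoint because $L\cap M=\emptyset$ in $\p^4$. If a hyperplane $\Pi\subset\p^4$ satisfies $\Pi\cap C\supseteq M_{C}$, then $\Pi\supseteq M$: otherwise $\Pi$ would meet the line $M$ in a length-$1$ subscheme containing the length-$3$ subscheme $M_{C}$. Using that $C$ is non-degenerate, restriction to $C$ is injective on the $3$-dimensional space of linear forms vanishing on $M$, so the hyperplane sections through $M$ form exactly the subspace $s_{M}\cdot H^{0}(\p^1,\mathcal{O}(2))$ of $V$, where $s_{M}$ is a fixed section of $\mathcal{O}(3)$ with divisor $M_{C}$. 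Likewise the hyperplane sections through $L$ form $s_{L}\cdot W_{L}$, where $s_{L}$ has divisor $L_{C}$ and $W_{L}\subset H^{0}(\p^1,\mathcal{O}(3))$ is the (three-dimensional, by non-degeneracy) moving linear system of the morphism $\pi|_{C}\colon C\to\p^2$, which is a genuine morphism since $C$ is a smooth curve. These two subspaces span $V$: an element of their intersection is a product $s_{M}t=s_{L}u$ with $t\in H^{0}(\mathcal{O}(2))$, $u\in W_{L}$, and since $L_{C}$ and $M_{C}$ are disjoint $s_{L}$ divides $t$, forcing $t=c\,s_{L}$ and $u=c\,s_{M}$, so the intersection lies in $\C\cdot s_{L}s_{M}$; moreover $s_{M}\in W_{L}$, because the span $\Pi_{0}=\langle L,M\rangle$ of the two disjoint lines is a $\p^3$, hence a hyperplane, and $\Pi_{0}\cap C$ has degree $5$ and contains $L_{C}+M_{C}$, so $\Pi_{0}\cap C=L_{C}+M_{C}$ and $M_{C}$ is the residual of $L_{C}$ there. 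Thus the intersection is exactly one-dimensional, and since both subspaces lie in the $5$-dimensional $V$, a dimension count gives $V=s_{M}\cdot H^{0}(\mathcal{O}(2))+s_{L}\cdot W_{L}$. The analogous formula holds for $\widetilde V$ in terms of $\tilde M$, $\tilde L$, $\tilde\pi$.

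Finally, I would transport these descriptions through $\phi$. Hypothesis~(2) says $\phi$ carries $M_{C}$ to $\tilde M\cap\tilde C$ and $L_{C}$ to $\tilde L\cap\tilde C$, so $\phi^{*}$ sends the corresponding sections on $\tilde C$ to $s_{M}$ and $s_{L}$ up to scalars; hypothesis~(1), $\pi|_{C}=\tilde\pi\circ\phi$, says $\phi^{*}$ sends the moving system of $\tilde\pi|_{\tilde C}$ to $W_{L}$. Hence $\phi^{*}\widetilde V=s_{M}\cdot H^{0}(\mathcal{O}(2))+s_{L}\cdot W_{L}=V$, which provides the automorphism of $\p^4$ extending $\phi$.

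The delicate step is the bookkeeping in this last part: one must track the various degree-$2$ and degree-$3$ line bundles on $C$ and $\tilde C$ and check that $\phi$ matches $W_{L}$ with its tilde-counterpart compatibly with the chosen isomorphism $\mathcal{O}_{C}(1)\cong\phi^{*}\mathcal{O}_{\tilde C}(1)$. One should also verify that the residual-subscheme arguments survive degenerate configurations (for instance $L$ tangent to $C$, or $M_{C}$ non-reduced); but such phenomena only affect the scheme structures of $L_{C}$ and $M_{C}$, which the isomorphism $\phi$ transports faithfully, so the argument goes through unchanged.
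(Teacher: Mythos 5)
Your proof is correct and is essentially the paper's argument recast in intrinsic form: the paper normalises $L=\tilde L$, $M=\tilde M$ and writes the explicit parametrisations $[f_3a_2:f_3b_2:f_3f_2:g_3f_2:h_3f_2]$, which is precisely your decomposition $V=s_M\cdot H^0(\mathcal{O}(2))+s_L\cdot W_L$ with one-dimensional overlap $\C\cdot s_Ls_M$ (the hyperplane spanned by $L$ and $M$). Your coordinate-free dimension count replaces the paper's closing step of matching two parametrised conics sharing the coordinate $f_2$, and the line-bundle bookkeeping you flag at the end is handled there by using the same $f_2$ and $f_3$ for both curves after the initial normalisation.
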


\begin{proof}
Up to composition by an automorphism of $\p^4$, we can assume that 
\begin{align*}
M = \tilde M &= \{[0:0:0:x_3:x_4]\}; \\
L = \tilde L &= \{[x_0:x_1:0:0:0]\}.
\end{align*}
and $M \cap C = \tilde M \cap \tilde C$, $L \cap C = \tilde L \cap \tilde C$.
Moreover, up to choosing coordinates of $\p^2$, both maps $\pi,\tilde \pi$ can be chosen to be 
$$[x_0:x_1:x_2:x_3:x_4]\dashmapsto [x_2:x_3:x_4].$$

Let $\nu, \mu\colon \p^1 \to \p^4$ be parametrisations of $C$, $\tilde C$ such that $\mu = \phi \circ \nu$.
Let $f_2$ (resp.~$f_3$)  be a homogeneous polynomial in $\C[u,v]$ of degree 2 (resp.~3), the roots of which correspond to the points of $\p^1$ sent to $C \cap L$ (resp.~$M \cap C$).

Now we have
\begin{align*}
\nu\colon [u:v]\in \p^1 &\mapsto [f_3a_2:f_3b_2:f_3f_2:g_3f_2:h_3f_2],\\
\mu\colon [u:v]\in \p^1 &\mapsto [f_3a'_2:f_3b'_2:f_3f_2:g'_3f_2:h'_3f_2],
\end{align*}
for some homogeneous polynomials $a_2, a_2', b_2,b'_2, g_3, g'_3, h_3, h'_3$, where the subscript corresponds to the degree.
Since we assume $\pi = \tilde \pi \circ \phi$, we have $g_3 = g'_3$ and $h_3 = h'_3$.

The projections of $C$ and $\tilde{C}$ from the $3$-secant correspond to the projection on the first $3$ coordinates, and yields a smooth conic. Choosing an automorphism of $\p^2$ on the first three coordinates, that fix the third one, we obtain that the two parametrisations of smooth conics
$$[u:v] \mapsto [a_2:b_2:f_2] \text{ and } [u:v] \mapsto [a'_2:b'_2:f_2] $$
are the same.
\end{proof}

\begin{prop}\label{Prop:DelPezzo4}
Let $C\subset \p^4$ be a smooth curve of genus~$g$ and degree~$d$, such that one of the following holds:
\begin{enumerate}
\item
$(g,d)\in\{\, (0,1),\, (0,2),\, (0,3),\,  (1,4)\, \}=\Lp\smallsetminus\{\,(1,3),(4,6)\,\}.$
\item
$(g,d)\in\{\, (0,4),\, (0,5),\,  (1,5),\,(2,6)\, \}=\Lq$ and $C$ is not contained in a hyperplane.\end{enumerate}
Then the curve $C$ is contained in a Del Pezzo surface $S\subset \p^4$ of degree~$4$, smooth intersection of two quadrics in $\p^4$. Moreover, we can choose a birational morphism $S\to \p^2$, blow-up of five points $p_1,\dots,p_5$, such that the curve $C$ is the strict transform of a curve of degree $k$  with multiplicity $m_i$ at $p_i$, according to Table~$\ref{tab:DelPezzo4}$.
\end{prop}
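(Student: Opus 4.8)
The plan is to treat all types uniformly by exhibiting, for each $(g,d)$, an explicit \emph{model}: a smooth curve $\tilde C$ lying on a quartic Del Pezzo surface $S_0=\mathrm{Bl}_{p_1,\dots,p_5}\p^2$ embedded in $\p^4$ by $\lvert -K_{S_0}\rvert$, realised as the strict transform of a plane curve of degree~$k$ with multiplicity $m_i$ at $p_i$ as in Table~\ref{tab:DelPezzo4}, and then showing that the given $C$ is the image of such a $\tilde C$ under an automorphism of $\p^4$. Since an automorphism of $\p^4$ sends a smooth intersection of two quadrics to another such, $C$ then lies on the corresponding quartic Del Pezzo surface $S$, and the blow-down $S_0\to\p^2$ transports to the required $S\to\p^2$ together with the description of $C$ by its degree and multiplicities; so the ``moreover'' clause is automatic, and the work is to build the model and produce the projectivity. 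In each case smoothness of $S_0$ just means that no three of the five points are collinear, and smoothness, degree, genus and (where relevant) non-degeneracy of $\tilde C$ are routine intersection-number checks in $\Pic(S_0)=\z^6$, using $-K_{S_0}\cdot\tilde C=d$, $\tilde C^2=2g-2+d$ and $h^0(S_0,-K_{S_0}-\tilde C)$.

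For $(g,d)\in\{(0,1),(0,2),(0,3),(0,4)\}$ no uniqueness statement is needed, because such a curve is projectively rigid: $C\cong\p^1$, its hyperplane class is forced to be $\mathcal O_{\p^1}(d)$, so $C$ spans a linear subspace of dimension $\min(d,4)$, and any two smooth rational curves of degree $d$ spanning such a subspace differ by an automorphism of $\p^4$ --- for $d\le 3$ because $\Aut(\p^4)$ is transitive on linear subspaces and surjects onto the automorphisms of the span, and for $d=4$ (non-degenerate by hypothesis) because the embedding is then the complete one given by $\lvert\mathcal O_{\p^1}(4)\rvert$. It therefore suffices to exhibit one model of each type, obtained by blowing up five general points of $\p^2$ and taking the line through two of them, the line through one of them, a general line, and a conic through two of them, respectively.

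For $(g,d)\in\{(1,4),(1,5),(2,6)\}$ the plan is to apply Lemma~\ref{Lemm:DelPezzo4easy}, which requires an isomorphism $C\to\tilde C$ carrying the hyperplane class of $C$ to that of $\tilde C$; since $C$ need not be isomorphic to a fixed model, I would build the model \emph{from} $C$. First realise the abstract curve as a plane curve $D$ with normalisation $C$ --- a smooth plane cubic if $g=1$, an irreducible plane quartic with a single node if $g=2$ (possible after choosing a general degree-$4$ linear system on $C$) --- and let $N$ denote the line-section class of $D$ pulled back to $C$. Then choose the blown-up points $p_i$ on $D$ (with $p_1$ the node when $g=2$) so that $-K_{S_0}\vert_{\tilde C}\sim H$, where $H$ is the hyperplane class of $C$; unwinding, this amounts to requiring that the sum of the remaining points lie in a \emph{fixed} divisor class on $C$, of degree $9-4=5$, $9-5=4$ and $12-2-6=4$ in the three cases. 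That class has $h^0\ge 3$ by Riemann--Roch, and its general member is a reduced divisor; a short dimension count on products of $C$, using that three of the chosen points are collinear exactly when their sum lies in a fixed degree-$3$ class, shows that the configurations with three collinear points form a proper closed subset. For such a choice $S_0$ is a genuine quartic Del Pezzo surface, $\tilde C$ is smooth of the correct genus and degree (and non-degenerate when $d=5$ or $6$), and $-K_{S_0}\vert_{\tilde C}\sim H$; so the identity of underlying curves $C\to\tilde C$ satisfies the hypothesis of Lemma~\ref{Lemm:DelPezzo4easy} and extends to an automorphism of $\p^4$, which finishes these cases.

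The remaining case $(g,d)=(0,5)$ is the one I expect to be the main obstacle, and it relies on Lemma~\ref{Lemm:DelPezzo4hard}. By Lemma~\ref{Lemm:HRS9.3} the non-degenerate rational quintic $C$ has a unique $3$-secant line $M$ in $\p^4$, and I would build a model $\tilde C$ on a quartic Del Pezzo surface $S_0$ whose unique $3$-secant is one of the sixteen lines of $S_0$, together with an isomorphism $\phi\colon C\to\tilde C$ matching $M\cap C$ with the corresponding $3$-secant triple on $\tilde C$ (possible because $\Aut(\p^1)$ is transitive on triples of points). It then remains to arrange the hypotheses of Lemma~\ref{Lemm:DelPezzo4hard}: the point-matching condition for a $2$-secant is obtained by taking $L,\tilde L$ to be arbitrary chords with $\phi$-matched endpoints, and the equality of the two projections from $L$ and $\tilde L$ is to be achieved after a suitable choice of coordinates on the target $\p^2$. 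This last equality is the delicate point, and it is exactly what the technical Lemma~\ref{Lemm:DelPezzo4hard} is designed to handle, repackaging projective equivalence of the two embeddings into data visible on the projections from the canonical $3$-secant and from a chosen $2$-secant. Once it is established, $\phi$ extends to an automorphism of $\p^4$ taking $\tilde C$ to $C$, and the proof is complete.
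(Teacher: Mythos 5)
Your treatment of the genus-$0$ cases of degree $\le 4$ and of the cases $(1,4)$, $(1,5)$, $(2,6)$ is viable, and for the latter three it is genuinely different from the paper: you build an abstract plane model of $C$ and then move the five blown-up points within a fixed divisor class on $C$ so that $-K_{S_0}\vert_{\tilde C}\sim H$, whereas the paper obtains the plane model canonically by projecting $C\subset\p^4$ from a well-chosen $1$- or $2$-secant line, which makes the hyperplane-class matching needed for Lemma~\ref{Lemm:DelPezzo4easy} essentially automatic. Your route works but the ``short dimension count'' hides a real check in the $(2,6)$ case, where the family of admissible $4$-tuples $p_2+\dots+p_5$ is only a $\p^2$ and one must verify that the collinear configurations do not sweep out all of it (they form a curve, but this needs the translate-of-theta-divisor argument, not just a naive count).

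The genuine gap is the case $(0,5)$. You propose to build a model $\tilde C$ independently, fix $\phi$ by matching the $3$-secant triples, take chords $L,\tilde L$ with $\phi$-matched endpoints, and then say that the hypothesis $\pi\vert_C=\tilde\pi\circ\phi$ of Lemma~\ref{Lemm:DelPezzo4hard} ``is to be achieved after a suitable choice of coordinates on the target $\p^2$'' and ``is exactly what the technical Lemma is designed to handle.'' But the lemma \emph{assumes} this equality; it does not produce it, and a change of coordinates on the target only conjugates the projection by $\PGL_3$. The two projections are given by two $g^2_3$'s on $\p^1$ (namely $H_C-L\cap C$ and $\phi^*(H_{\tilde C}-\tilde L\cap\tilde C)$ inside the $4$-dimensional $H^0(\mathcal O_{\p^1}(3))$), and once $\phi$ is pinned down by the three points of $M\cap C$ there is no residual $\Aut(\p^1)$ left to align them; asking the two $5$-dimensional subspaces $H_C,\phi^*H_{\tilde C}\subset H^0(\mathcal O_{\p^1}(5))$ to match is essentially the projective equivalence you are trying to prove, so the argument as written is circular. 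Moreover, even if you reverse the order and define $\phi$ by identifying the two nodal plane cubics (which \emph{does} force $\pi\vert_C=\tilde\pi\circ\phi$, by uniqueness of normalisation), you then lose control of condition (2): the images of $L\cap C$ and of $\tilde L\cap\tilde C$ are two \emph{a priori unrelated} pairs of points on the common cubic (they are not the node), and likewise for the images of the $3$-secants. The fix is the paper's: construct the Del Pezzo surface \emph{from} the projected data of $C$ itself --- blow up the node $p_1$ of $C'=\pi(C)$, the two further points $p_2,p_3$ cut on $C'$ by a general conic $D'$ through $p_1$ and the images $q_1,q_2$ of $L\cap C$, and the two points $p_4,p_5=D'\cap M'$ on the image of the $3$-secant --- so that all hypotheses of Lemma~\ref{Lemm:DelPezzo4hard} hold by construction (see Table~\ref{tab:05}). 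Without some such construction your proof of the $(0,5)$ case is incomplete.
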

\begin{table}[h]
\begin{center}
\begin{tabular}{CCCCC}
\toprule 
g & d & k & (m_1, \dots, m_5)  \\ 
\midrule
0 & 1 & 2 & (1,1,1,1,1) \\
0 & 2 & 2 & (1,1,1,1,0) \\
0 & 3 & 2 & (1,1,1,0,0) \\
0 & 4 & 2 & (1,1,0,0,0) \\
0 & 5 & 3 & (2,1,1,0,0) \\
1 & 4 & 3 & (1,1,1,1,1) \\
1 & 5 & 3 & (1,1,1,1,0) \\
2 & 6 & 4 & (2,1,1,1,1) \\
\bottomrule \\
\end{tabular}
\end{center}
\caption{}
\label{tab:DelPezzo4}
\end{table}
\begin{proof}
Recall the following classical fact. If $\pi \colon S\to \p^2$ is the blow-up of five points, such that no three are collinear, then $S$ is a Del Pezzo surface of degree $4$ and the anticanonical morphism yields a closed embedding $S\to \p^4$, which is the smooth intersection of two smooth quadric hypersurfaces. Moreover, every smooth intersection of two quadrics surface is obtained by this way.

Denoting by $E_1,\dots,E_5\subset S$ the exceptional divisors contracted by $\pi$ and by $L$ the pull-back of a general line, the  divisors $kL-\sum m_i E_i$ given in Table~\ref{tab:DelPezzo4} yield the desired pairs $(g,d)$ in $\p^4$. It remains to see that $C$ is given by one of these divisors, in some Del Pezzo surface of degree~$4$. 

If $(g,d)\in \{(0,1),(0,2),(0,3),(0,4)\}$, this is because the curve is unique up to automorphisms of $\p^4$ (by hypothesis, $C$ is not contained in a hyperplane in the case $(0,4)$).

In the other cases, we use the fact that $S\to \p^2$ corresponds to the projection of $\p^4$ from the line corresponding to the strict transform of the conic through $p_1,\dots,p_5$. 
Our strategy is then to choose a general line $L\subset \p^4$ having the correct intersection with $C$, project from it and choose some points in $\p^2$ according to the numerology in Table~\ref{tab:DelPezzo4}.

If $(g,d)\in (1,5)$, we project from a $2$-secant line $L$ of $C$ and obtain a rational map $\pi\colon \p^4\dasharrow \p^2$. We claim that the restriction of $\pi$ to $C$ is birational. Indeed, the curve is not in a hyperplane so the image $C'$ is a curve of degree $\ge 2$; the degree of $C'$ times the degree of the map being $3$, we find that $C'$ is a cubic and the restriction is birational. The genus of $C$ being $1$, the curve $C'$ is smooth and $\pi$ restricts to an isomorphism $C\to C'$. 
Let $q_1,q_2\in C'$ be the image of the two points of $L\cap C$,  let $D'\subset \p^2$ be a general conic through $q_1,q_2$ and denote by $p_1,\dots,p_4$ the points such that $C' \cap D' =\{q_1,q_2,p_1,\dots,p_4\}$.
We then choose a general point $p_5$ of $D'$, denote by $\kappa\colon S\to \p^2$ the blow-up of $p_1,\dots,p_5$ and see the Del Pezzo surface $S$ as a surface of degree $4$ in $\p^4$, via the anticanonical morphism. 
Writing as before $E_i=\kappa^{-1}(p_i)$, for $i=1,\dots,5$ and denoting by $\tilde L\in \Pic{S}$ the divisor corresponding to a line of $\p^2$, the strict transform $\tilde{C}\subset S$ of the curve $C'$ is linearly equivalent to $3\tilde L-\sum_{i=1}^4 E_i$ as desired. 

Now observe that $\tilde L+\tilde{D} = \tilde L+(2\tilde L-\sum_{i=1}^5 E_i)= 3\tilde L-\sum_{i=1}^5 E_i$ is an hyperplane section of $S$.
Restricting to $\tilde C$ we obtain that $\tilde{L}|_{\tilde C}+\tilde{D}|_{\tilde C}=\tilde{L}|_{\tilde C}+\kappa^{-1}(q_1)+\kappa^{-1}(q_2)$ is a hyperplane section.
This corresponds via $\kappa^{-1}\pi$ to the restriction of a hyperplane through $L$ to $C\subset \p^4$.
So we can apply Lemma~\ref{Lemm:DelPezzo4easy} and obtain that $C$ and $\tilde C$ are projectively equivalent; in particular the curve $C$ is also contained in a smooth Del Pezzo surface of degree 4.

The same kind of argument works with $(g,d)\in \{(1,4),(2,6)\}$, we briefly explain the slight differences. For $(2,6)$, we also choose a $2$-secant line $L$ and get a curve $C'\subset \p^2$ of degree $4$, with a double point $p_1$, if the restriction to $C$ is birational. If it is not birational, then it is a double covering over a conic of $\p^2$, which implies that the restriction of the linear system to $C$ is equivalent to $|2K_C|$, since $K_C$ is the unique divisor of degree $2$ that gives a double covering. Hence, $L|_C+2K_C$ is linearly equivalent to the system of hyperplanes. Choosing  a general $2$-secant line avoids this situation, since the divisors of degree $2$ are not all equivalent. We then choose a general conic $D'$ through $p_1,q_1,q_2$, and the intersection with $C'$ yields then four other points $p_2,\dots,p_5$. On the blow-up we again get $\tilde{D}\cap {\tilde{\Gamma}}=\{\kappa^{-1}(q_1),\kappa^{-1}(q_2)\}$, so the same argument works. 
For $(1,4)$, we choose a $1$-secant line $L$ which is not contained in the hyperplane containing $C$ and get a smooth cubic curve $C'\subset \p^2$. We then choose a general conic $D'$ through the point $q_1$ corresponding to $C\cap L$, and get five other points.

Now we consider the remaining case $(g,d) = (0,5)$.
Recall from Lemma~\ref{Lemm:HRS9.3} that such a curve $C$ admits a unique 3-secant line $M$ in $\p^4$.
We choose a general $2$-secant line $L\subset \p^4$ of $C$, and consider as before the projection $\pi\colon \p^4\dasharrow \p^2$ by $L$, which sends $C$ onto a cubic curve $C'$, with a double point at $p_1\in \p^2$. 
Moreover, $\pi$ sends $M$ onto a line $M'\subset \p^2$. 
Denote by $q_1,q_2\in C'$ the image of $L\cap C$. 
Let $D'\subset \p^2$ be a general conic through $p_1,q_1,q_2$, whose intersection with $C'$ gives two other points $p_2,p_3\in \p^2$ such that $C'|_{D'}=2p_1+q_1+q_2+p_2+p_3$. 
We then denote by $p_4,p_5$ the two points of $D'\cap M'$. 
As before, we denote by $\kappa\colon S\to \p^2$ the blow-up of $p_1,\dots,p_5$, and see the Del Pezzo surface in $\p^4$. 
The strict transforms of $C',M',D'$ are then equivalent to $k\tilde L-\sum m_i E_i$, according to the multiplicities of Table~\ref{tab:05}.
\begin{table}[h]
\begin{center}
\begin{tabular}{CCCCCC}
\toprule 
&g & d & k & (m_1, \dots, m_5)  \\ 
\midrule
C'\cup M'&2 & 6 & 4 & (2,1,1,1,1) \\
C'&0 & 5 & 3 & (2,1,1,0,0) \\
M'&0 & 1 & 1 & (0,0,0,1,1) \\
D'&0 & 1 & 2 & (1,1,1,1,1) \\
\bottomrule\\
\end{tabular}
\end{center}
\caption{}
\label{tab:05}
\end{table}

Then one can apply Lemma~\ref{Lemm:DelPezzo4hard}.
\end{proof}

\begin{prop} \label{Prop:smoothgeneral}
Let $Y\subset \p^4$ be a smooth cubic, and let $C \subset Y$ be a smooth curve of genus~$g$ and degree~$d$, such that one of the following holds:
\begin{enumerate}[$(i)$]
\item
$(g,d)\in \Lp;$
\item
$(g,d)\in \Lq\smallsetminus \{(0,5)\}$ and $C$ is not contained in a hyperplane;
\item
$(g,d)=(0,5)$, $C$ is not contained in a hyperplane and the unique $3$-secant 
line of $C$  in $\p^4$ is not contained in 
$Y$.
\end{enumerate}Denoting by $X\to Y$ the blow-up of~$Y$ along $C$, the following assertions hold:
\begin{enumerate}
\item
The linear system of hyperquadric sections of~$Y$ through $C$ has no base-point outside $C$ and has a general member which is smooth.
\item
The linear system $\lvert -K_X\rvert$ has no base-point and has a general member which is smooth.
\item
The threefold $X$ is weak-Fano.
\end{enumerate}
\end{prop}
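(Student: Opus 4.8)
The plan is to reduce everything to a single base-point-freeness statement on the Del Pezzo surface provided by Proposition~\ref{Prop:DelPezzo4}. First, by Proposition~\ref{Prop:cases 13 and 46} we may assume $(g,d)\notin\{(1,3),(4,6)\}$, so Proposition~\ref{Prop:DelPezzo4} applies and $C$ lies on a smooth Del Pezzo surface $S\subset\p^4$ of degree~$4$, the smooth intersection of two quadrics spanning a pencil $\Lambda_S$, together with a birational morphism $\kappa\colon S\to\p^2$ blowing up five points $p_1,\dots,p_5$ with $C$ linearly equivalent to $kL-\sum m_iE_i$ as in Table~\ref{tab:DelPezzo4}. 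Since $S$ is projectively normal in its anticanonical embedding, the restriction map $H^0(\p^4,\mathcal O(2))\to H^0(S,\mathcal O_S(-2K_S))$ is surjective with kernel $\Lambda_S$; hence the space $V$ of quadrics of $\p^4$ through $C$ contains $\Lambda_S$ and maps onto $H^0(S,\mathcal O_S(-2K_S-C))$. Now $-2K_S-C\sim(6-k)L-\sum(2-m_i)E_i$, and one checks, case by case from Table~\ref{tab:DelPezzo4}, that this class is base-point-free on $S$ for every listed $(g,d)$ \emph{except} $(0,5)$ --- for instance by exhibiting it as a sum of base-point-free classes such as $-K_S$, pencils of conics through four of the $p_i$, or pull-backs of lines under other blow-downs $S\to\p^2$. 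In the remaining case $(0,5)$ one gets $-2K_S-C\sim 3L-E_2-E_3-2E_4-2E_5$, a class of self-intersection $-1$ whose fixed part is exactly the $(-1)$-curve $M\subset S$ --- the strict transform of the unique $3$-secant line of $C$ (Lemma~\ref{Lemm:HRS9.3}) --- the moving part $|2L-E_2-E_3-E_4-E_5|$ being a base-point-free pencil of conics.

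Next I would describe the base locus of the linear system $\Lambda_Y$ of hyperquadric sections of $Y$ through $C$. Outside $S$, every point is avoided by some member of the pencil $\Lambda_S\subset V$. If $q\in S\cap(Y\setminus C)$, then (when $(g,d)=(0,5)$ we also have $q\notin M$, since $M\cap Y$ is a finite subscheme of $C$) base-point-freeness of $-2K_S-C$, or of its moving part, yields an effective divisor $R\sim-2K_S-C$ on $S$ with $q\notin R$; by the surjectivity of $V\to H^0(S,\mathcal O_S(-2K_S-C))$ the curve $C+R\in|-2K_S|$ is the restriction to $S$ of a quadric in $V$, and that quadric avoids~$q$. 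Thus $\Lambda_Y$ has no base point outside $C$, so by Bertini a general member of $\Lambda_Y$ is smooth away from $C$, i.e.\ a general member of $|-K_X|$ is smooth off the exceptional divisor $E$.

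The core of the proof is then to show that $|-K_X|$ has no base point \emph{on} $E$; granting this, $-K_X=\pi^*(2H)-E$ is base-point-free, hence nef, and since $(-K_X)^3=22-4d+2g>0$ by Lemma~\ref{Lemm:K^3} for every $(g,d)\in\Lp\cup\Lq$, it is big and nef, so $X$ is weak-Fano, and a further application of Bertini produces the smooth general members asserted in~(1) and~(2). For the vanishing of the base locus on $E$ one fixes a point $p\in C$ (with $p\notin M$ in the case $(0,5)$) and uses the surjectivity of $V\to H^0(S,\mathcal O_S(-2K_S-C))$ together with the base-point-freeness found above to see that around $p$ the quadrics in $V$ generate the ideal sheaf $\mathcal I_{C/\p^4}(2)$; restricting to $Y$, and using that $C$ is a local complete intersection in $Y$ so that $\mathcal I_{C/Y}$ is locally generated by two equations, one gets that $\mathcal I_{C/Y}(2H)$ is globally generated by $\Lambda_Y$ around $p$, which is equivalent to $|-K_X|$ having no base point on the fibre of $E$ over $p$ (and a general member being smooth there).

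The main obstacle is the case $(g,d)=(0,5)$ over the finitely many points of $C\cap M$. There every quadric through $C$ also vanishes on the $3$-secant line $M$ (the fixed part of $|-2K_S-C|$ on $S$), so the quadrics in $V$ do not generate $\mathcal I_{C/\p^4}(2)$ near such a point and the previous argument breaks down. This is precisely where the hypothesis that $M$ is not contained in $Y$ is needed: then $M$ meets $Y$ transversally along the three points of $M\cap C$, and after restriction to $Y$ the unwanted common zero along $M$ disappears. Checking that the hyperquadric sections through $C$ nevertheless generate $\mathcal I_{C/Y}(2H)$ at these three points, and that $|-K_X|$ is base-point-free with a smooth general member over them, requires a direct local computation in suitable coordinates, in the spirit of the proof of Proposition~\ref{Prop:cases 13 and 46}; this is the one genuinely delicate point of the argument, and it is what forces the extra $3$-secant hypothesis in the statement of the proposition.
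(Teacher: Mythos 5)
Your overall route is the same as the paper's: reduce to the degree-$4$ Del Pezzo surface $S=Q_1\cap Q_2$ of Proposition~\ref{Prop:DelPezzo4}, show that the residual system $-2K_S-C$ is base-point-free except for the fixed $3$-secant line $M$ in the case $(0,5)$, and deduce from this the base locus of the quadrics through $C$. (Your use of projective normality of $S$ to get surjectivity of $V\to H^0(S,\mathcal O_S(-2K_S-C))$ is a legitimate substitute for the paper's device of restricting the strict transforms of the quadrics to $\tilde S\subset\hat{\p}^4$.) However, there is a genuine gap exactly where you flag ``the one genuinely delicate point'': the case $(0,5)$ at the points of $C\cap M$. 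You defer this to an unspecified ``direct local computation'' and never carry it out, so the proposition is not actually proved in the one case where the extra hypothesis matters. The paper disposes of it without any local computation: working on the blow-up $\pi\colon\hat{\p}^4\to\p^4$ of $C$, the base locus of the system of strict transforms of quadrics through $C$ is the strict transform $\tilde M$ of $M$, and since $X\sim\pi^*(Y)-E_{\hat{\p}^4}$ one computes $\tilde M\cdot X=3\cdot 1-3=0$; as $M\not\subset Y$ forces $\tilde M\not\subset X$, the curves $\tilde M$ and $X$ are \emph{disjoint}, so $\lvert -K_X\rvert=\hat\Lambda|_X$ is base-point-free outright. You should either reproduce this intersection-theoretic argument or actually perform the local analysis you allude to.

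A second gap concerns smoothness of the general member \emph{along} $C$ (assertion~(1), which does not follow from assertion~(2): a smooth member of $\lvert-K_X\rvert$ can contain a whole fibre of $E$ and push down to a hyperquadric section singular at a point of $C$). You dismiss this with the parenthetical ``and a general member being smooth there''. Generation of $\mathcal I_{C/Y}(2H)$ at a \emph{fixed} $p\in C$ does give a member smooth at $p$, but one must still rule out a singular point that moves along $C$ as the member varies; this requires either a dimension count on the incidence variety $\{(Q,p):Q\cap Y \text{ singular at }p\}$ (whose fibres over $C$ have codimension $2$ in $\Lambda_Y$ once base-point-freeness on each fibre of $E$ is known), or the paper's argument, which shows via the pencil $\langle Q_1,Q_2\rangle$ that any such singularity is mobile and then applies Bertini to the restriction of $\lvert-K_X\rvert$ to $E$, writing the general restricted member as $s+\sum f_i$ and deriving a contradiction with base-point-freeness. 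As written, your proof establishes (2) and (3) modulo the $(0,5)$ issue, but not the smoothness claim in (1).
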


\begin{proof}
We can assume $(g,d) \not\in \{(1,3),(4,6)\}$, otherwise Proposition~\ref{Prop:cases 13 and 46} applies directly.

Let $\Lambda$ be the linear system of quadrics of $\p^4$ passing through $C$. We denote by $\pi\colon \hat{\p}^4\to \p^4$ the blow-up of $C$, and denote by $\hat{\Lambda}$ the linear system of hypersurface of $\hat{\p}^4$ that correspond to strict transforms of elements of $\Lambda$. Let us show that $\hat{\Lambda}$ is without base-point, except in the case $(g,d)=(0,5)$, where its base-locus is the strict transform $\tilde{L}$ of the unique $3$-secant line $L\subset \p^4$ of $C$.

To show this, we use the fact that $C$ is contained in a Del Pezzo surface $S$ of degree $4$, intersection of two smooth quadrics $Q_1$ and $Q_2$ (Proposition~\ref{Prop:DelPezzo4}). We denote by $\Lambda_S=\Lambda|_S$ the linear system $\Lambda_S$ of hyperquadric sections of $S$ containing $C$, and observe that the residual system $R = \Lambda_S - C$ (respectively $R = \Lambda_S - C-D$ in the case $(0,5)$) is base-point free. This can be checked by looking at Table~\ref{tab:4} (using the notation of Proposition~\ref{Prop:DelPezzo4}).
\begin{table}[h]
\begin{center}
\begin{tabular}{CCCCCC}
\toprule 
g & d & k & (m_1, \dots, m_5) & R \\ 
\midrule
0 & 1 & 2 & (1,1,1,1,1) & 4 (1,1,1,1,1)\\
0 & 2 & 2 & (1,1,1,1,0) & 4 (1,1,1,1,2)\\
0 & 3 & 2 & (1,1,1,0,0) & 4 (1,1,1,2,2)\\
0 & 4 & 2 & (1,1,0,0,0) & 4 (1,1,2,2,2)\\
0 & 5 & 3 & (2,1,1,0,0) & 2 (0,1,1,1,1)\\
1 & 4 & 3 & (1,1,1,1,1) & 3 (1,1,1,1,1)\\
1 & 5 & 3 & (1,1,1,1,0) & 3 (1,1,1,1,2)\\
2 & 6 & 4 & (2,1,1,1,1) & 2 (0,1,1,1,1)\\
\bottomrule
\end{tabular}
\end{center}
\caption{} \label{tab:4}
\end{table}
Indeed, conics of $\p^2$ through four points correspond to a pencil of conics of $\p^4$, which has no base-point. All the above are sum of one such system with other base-point-free systems. 

This gives the desired result on the system $\hat{\Lambda}$. Indeed, the strict transform of $Q_1$ and $Q_2$ on $\hat{\p}^4$ are two elements of $\hat{\Lambda}$, whose intersection is a surface $\tilde{S}$ isomorphic to $S$, and the restriction of $\hat{\Lambda}$ to $\tilde{S}$ corresponds to $\Lambda_S-C$.

The blow-up $X\to Y$ of~$Y$ along $C$ is naturally embedded into $\hat{\p}^4$, and we have $\lvert-K_X\rvert=\hat{\Lambda}|_X$. 
In particular, the linear system $\lvert -K_X\rvert$ is base-point-free: this is clear except in the case $(g,d)=(0,5)$, where we have to observe that $\tilde{L}$ and $X$ are disjoint. 
Indeed, denoting by $E_{\hat{\p}^4}\subset \hat{\p}^4$ the exceptional divisor of $\pi$, we have $\tilde{L}\cdot E_{\hat{\p}^4}\ge 3$ and $X\simeq \pi^*(Y)-E_{\p^4}$, so $\tilde{L}\cdot X=0$. 
This implies that the base-locus of $\Lambda_Y=\Lambda|_Y$ is equal to $C$, but also that $-K_X$ is nef. By Lemma~\ref{Lemm:K^3} we have $(-K_X)^3 =22-4d+2g>0$. Thus $X$ is weak-Fano.

By Bertini's Theorem, a general member of $\Lambda_Y$ is smooth outside of $C$. It remains to show that a general member of $\Lambda_Y$ is smooth at every point of $C$, this will imply that a general member of $\lvert -K_X\rvert$ is smooth.

We assume that $p \in C$ is a singular point of a hyperquadric section $Q \cap Y$. 
This is equivalent to the equality $T_p Q = T_p Y$ as 3-dimensional vector spaces of $T_p \p^4$. 
Since $Q_1 \cap Q_2$ is a smooth surface, we have $T_p Q_1 \neq T_p Q_2$ for any $p \in C$, and by the same argument the same is true for any two members of the pencil generated by $Q_1$ and $Q_2$. 
In particular we can have $T_p Q = T_p Y$ only for a unique member $Q$ of the pencil generated by $Q_1$ and $Q_2$: this shows that the hypothetical singularity of the system $\Lambda_Y$ is mobile along $C$.

In particular, the restriction of a general member of $\lvert-K_X\rvert$ to the exceptional divisor $E$ has the form $s + \sum f_i$, where $s$ is a section of $E\to C$ and each $f_i$ is a fibre above a singular point.
In particular this general member is reducible hence singular.
Since the $f_i$ are mobile, by Bertini's Theorem this is possible only if $s$ is a fixed 
part of the linear system, which contradicts the fact that $\lvert -K_X\rvert$ is base-point free.
\end{proof}

Let us summarise the results obtained so far:

\begin{coro}\label{Coro:WeakFano}
Let $Y\subset \p^4$ be a smooth cubic hypersurface, let $C\subset Y$ be a smooth curve of type $(g,d) \in  \Lp$ and let $X\to Y$ be the blow-up of~$C$.
\begin{enumerate}
\item \label{case:weakFano1} If $(g,d)\in \{\,(0,1),\,(1,3)\,\}$, then $X$ is Fano;
\item \label{case:weakFano2} If $(g,d)\in \{\,(0,2),\,(0,3),\,(1,4),\,(4,6)\,\}=\Lp\smallsetminus \{\,(0,1),\,(1,3)\,\}$, then $X$ is weak-Fano.
\item \label{case:weakFano3} If $(g,d)\in \{(0,4),\, (1,5),\, (2,6)\}=\Lq\smallsetminus \{(0,5)\}$ and $C$ is not contained in a hyperplane then $X$ is weak-Fano.
\item
If $(g,d)=(0,5)$, $C$ is not contained in a hyperplane and the unique $3$-secant 
line of $C$  in $\p^4$ is not contained in 
$Y$, then $X$ is weak-Fano.
\end{enumerate}
Moreover, in all these cases, there exists a smooth hyperquadric section of~$Y$ that contains $C$, and $\lvert -K_X\rvert$ has no base-point.
\end{coro}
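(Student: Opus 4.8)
The plan is to assemble Corollary~\ref{Coro:WeakFano} purely as a bookkeeping summary of the results already proved in $\S\ref{Sec:FanoCase}$ and $\S\ref{Sec:smooth_sections}$; there is no new mathematics to do, only a careful case-by-case citation. First I would dispatch case~\ref{case:weakFano1}, the two Fano cases $(g,d)\in\{(0,1),(1,3)\}$, directly by Proposition~\ref{Prop:Fano}, which already shows $X$ is Fano (hence weak-Fano). For the remaining cases I split according to the two special complete-intersection types versus the Del Pezzo-of-degree-$4$ types.

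\begin{proof}
For $(g,d) \in \{(0,1),(1,3)\}$, Proposition~\ref{Prop:Fano} shows that $X$ is Fano. For $(g,d)\in \{(1,3),(4,6)\}$, Proposition~\ref{Prop:cases 13 and 46} shows that $X$ is weak-Fano, that the linear system of hyperquadric sections of~$Y$ through $C$ has a smooth general member, and that $\lvert -K_X\rvert$ has no base-point; since $C=H\cap W\cap Y$ is cut out by a hyperplane and a hyperplane or quadric, the smooth general member of that system is a smooth hyperquadric section containing $C$. For all the other cases listed, namely $(g,d)\in \{(0,2),(0,3),(1,4)\}$ (with no extra hypothesis), $(g,d)\in \{(0,4),(1,5),(2,6)\}$ with $C$ not in a hyperplane, and $(g,d)=(0,5)$ with $C$ not in a hyperplane and with its unique $3$-secant line not contained in~$Y$, Proposition~\ref{Prop:smoothgeneral} applies and gives directly that $X$ is weak-Fano, that $\lvert -K_X\rvert$ is base-point-free, and that the linear system of hyperquadric sections of~$Y$ through $C$ has no base-point outside $C$ with a smooth general member; such a general member is a smooth hyperquadric section of~$Y$ containing $C$. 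Collecting these statements yields assertions~\ref{case:weakFano1}--\ref{case:weakFano3} and the case $(g,d)=(0,5)$, together with the final sentence, in all cases.
\end{proof}

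The only point requiring a little care is the logical partition of $\Lp\cup\Lq$ into the subcases to which Propositions~\ref{Prop:Fano}, \ref{Prop:cases 13 and 46} and \ref{Prop:smoothgeneral} apply, and the observation that the "general smooth member of the system of hyperquadric sections through $C$" automatically is a smooth hyperquadric section containing $C$, which is immediate from the definition of that system. I do not expect any genuine obstacle here: the corollary is explicitly flagged in the text as "Let us summarise the results obtained so far," so the proof is a one-paragraph dispatch into the three preceding propositions. The one thing to double-check is that $(1,3)$ legitimately appears both in the Fano bullet (via Proposition~\ref{Prop:Fano}) and, redundantly, in the scope of Proposition~\ref{Prop:cases 13 and 46}, which is harmless since both conclusions are consistent.
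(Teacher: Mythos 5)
Your proposal is correct and follows exactly the same route as the paper: the paper's own proof is the same one-paragraph dispatch into Propositions~\ref{Prop:Fano}, \ref{Prop:cases 13 and 46} and~\ref{Prop:smoothgeneral}. The only loose end in your case split is $(g,d)=(0,1)$: you cover it only by Proposition~\ref{Prop:Fano}, which gives that $X$ is Fano and that $\lvert -K_X\rvert$ is base-point-free but says nothing about a smooth hyperquadric section containing $C$; since the hypothesis $(i)$ of Proposition~\ref{Prop:smoothgeneral} is all of $\Lp$, including $(0,1)$, you should also invoke it there to get the final ``moreover'' clause in that case, exactly as the paper implicitly does.
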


\begin{proof}
Case~\ref{case:weakFano1} was proved in Proposition~\ref{Prop:Fano}. Case~\ref{case:weakFano2} is given by Proposition~\ref{Prop:cases 13 and 46}. 
The two other cases are given by Proposition~\ref{Prop:smoothgeneral}. The existence of the smooth hyperquadric section is also provided by Propositions~\ref{Prop:cases 13 and 46} and~\ref{Prop:smoothgeneral}.
\end{proof}

\begin{rem} \label{Rem:cas05}Let $Y\subset \p^4$ be a smooth cubic and let $C\subset Y$ be a smooth curve of type $(g,d)\in \Lq.$
\begin{enumerate}[wide]
\item
We have seen in Proposition~\ref{Prop:Always3secant} that the existence of a hyperplane section containing $C$ yields the existence of a line $L\subset Y$ that is  $3$-secant  to~$C$.
\item
Conversely, the existence of the $3$-secant yields the existence of the hyperplane section, if $(g,d)\not=(0,5)$ (Corollary~\ref{Coro:WeakFano}).
\item
Since by Lemma~\ref{Lemm:HRS9.3} every non-degenerate curve of type $(0,5)$ in $\p^4$ admits a $3$-secant line, this does not generalise to the case $(g,d)=(0,5)$.
\end{enumerate}\end{rem}

\subsection{Existence of curves in hyperquadric sections}\label{Sec:hyperquadric}

In order to prove Theorem~\ref{Thm:WeakFano}, it remains to show the existence of the four cases $(g,d)\in \Lq$, when $C$ lies in a hyperquadric section but not in a hyperplane section.
The aim of this section is to produce examples of such smooth curves that give rise to a nef anticanonical divisor after blow-up.
The point here is to show that the open set in Theorem~\ref{Thm:WeakFano}\ref{case:thm2}\ref{case:thmii} is not empty.
We shall produce these curves  by considering singular rational hyperquadric surfaces.\\ 

The following result is classical, we recall the proof for sake of completeness.

\begin{lemm}\label{Lemm:6lines}
Let $Y\subset \p^4$ be a smooth cubic threefold. There exists a dense open subset $U\subset Y$ such that for each point $p\in Y$, the following hold:
\begin{enumerate}[wide]
\item The tangent hyperplane section at $p$ is smooth outside of $p$ and has a simple double point at $p$;
\item There are exactly six lines of~$Y$ that pass through $p$.
\end{enumerate}
\end{lemm}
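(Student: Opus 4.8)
The plan is to work in a suitable affine chart and analyse the tangent hyperplane section directly in coordinates, then use incidence-variety / dimension arguments to show that the bad locus is a proper closed subset.

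\textbf{Setting up coordinates.} First I would fix a point $p\in Y$ and choose projective coordinates $[x_0:\dots:x_4]$ so that $p=[0:0:0:0:1]$, and the tangent hyperplane $T_pY$ is $\{x_0=0\}$. Writing the cubic equation of $Y$ as $x_4^2 x_0 + x_4 Q(x_0,\dots,x_3) + C(x_0,\dots,x_3)=0$ where $Q$ is a quadratic form and $C$ a cubic form in $x_0,\dots,x_3$, smoothness of $Y$ at $p$ is automatic (the coefficient of $x_4^2 x_0$ is nonzero), and the tangent hyperplane section $T_pY\cap Y$ is the surface $S_p\subset\{x_0=0\}\cong\p^3$ given by $x_4\, Q(0,x_1,x_2,x_3) + C(0,x_1,x_2,x_3)=0$. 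This is a cubic surface in $\p^3$ with coordinates $[x_1:x_2:x_3:x_4]$, and $p=[0:0:0:1]$ is a point of it. Its singularities: the partial derivatives show that $p$ is a singular point precisely with quadratic part given by the conic $\bar Q=Q(0,x_1,x_2,x_3)$, so $p$ is an ordinary double point of $S_p$ exactly when $\bar Q$ is a nondegenerate conic, i.e.\ of rank $3$. The lines of $Y$ through $p$ correspond to lines in $\{x_0=0\}$ through $p$ lying on $S_p$; a line $\ell_v=\{[tv_1:tv_2:tv_3:s]\}$ through $p$ in direction $v=[v_1:v_2:v_3]\in\p^2$ lies on $S_p$ iff $s\,\bar Q(v)+t\,\bar C(v)=0$ identically in $(s,t)$, i.e.\ iff $\bar Q(v)=\bar C(v)=0$, where $\bar C=C(0,x_1,x_2,x_3)$. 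So the lines through $p$ are counted by the scheme $\{\bar Q=0\}\cap\{\bar C=0\}\subset\p^2$, generically a conic meeting a cubic, hence six points.

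\textbf{The two genericity conditions.} For (1) I would show that $\{p\in Y : \bar Q_p \text{ has rank} \le 2\}$ is a proper closed subset; for (2) that $\{p\in Y: \bar Q_p\cap\bar C_p \text{ is not six reduced points}\}$ is a proper closed subset. Closedness in both cases is standard (vanishing of the discriminant of the conic; vanishing of a resultant/discriminant measuring when the intersection is non-transverse or has fewer than six points). Properness is the substance: I must exhibit at least one point $p\in Y$ where $\bar Q_p$ has rank $3$ and $\{\bar Q_p=0\}\cap\{\bar C_p=0\}$ consists of six distinct points. This I would do by a direct example — pick an explicit smooth cubic threefold (e.g.\ a Fermat-type or generic cubic), pick a convenient point, compute $\bar Q$ and $\bar C$, and check the rank and transversality by hand or cite that the generic such pair is transverse. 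Since the statement is "there exists a dense open $U$", one witness point for each of the two conditions suffices; intersecting the two open sets (and with the nonempty open locus where the cubic surface $S_p$ is smooth away from $p$, which again follows from Bertini-type genericity of the cubic form $\bar C$ relative to the cone over $\bar Q$) gives the desired $U$.

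\textbf{The smooth-away-from-$p$ part of (1).} The remaining clause of (1), that $S_p$ is smooth outside $p$, needs a separate (but similar) argument: a singular point $q\neq p$ of $S_p=T_pY\cap Y$ forces $T_qY = T_pY$, i.e.\ $q$ lies on the "contact locus" of the Gauss map along the hyperplane $T_pY$. One shows that for general $p$ this contact locus is just $\{p\}$: the set of pairs $(p,q)$ with $p\ne q$, $T_pY=T_qY$ is of dimension at most $2$ inside $Y\times Y$ (a generic hyperplane is tangent to $Y$ at a single point, since the dual variety $Y^\vee$ is a hypersurface in $(\p^4)^\vee$ and the Gauss map is birational onto it for a smooth hypersurface of degree $\ge 2$), so its image in $Y$ is a proper closed subset.

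\textbf{Expected main obstacle.} The routine but slightly delicate point is packaging the three genericity conditions uniformly and checking that each defines a genuinely proper (nonempty-complement) closed subset rather than all of $Y$; concretely, the most error-prone step is the explicit verification on a witness cubic that the conic $\bar Q_p$ is smooth and meets the cubic $\bar C_p$ transversally in six points. The conceptual input — that tangent hyperplane sections of a smooth cubic threefold generically have a single ordinary node and that a smooth cubic threefold contains exactly six lines through a general point — is classical (it underlies the Geiser-type involution recalled in the introduction), so I would lean on that and keep the coordinate computation as light as possible.
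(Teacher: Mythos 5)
Your coordinate setup and your reduction of both conditions to properties of the pair $(\bar Q_p,\bar C_p)$ are correct, and your treatment of the ``smooth away from $p$'' clause via the Gauss map and the dual hypersurface is essentially the paper's argument. The gap is in how you establish the \emph{properness} of the two remaining bad loci. The lemma is asserted for an \emph{arbitrary} smooth cubic threefold $Y$, so exhibiting a witness point on a Fermat-type or generic cubic proves nothing about the given $Y$: a priori, some special smooth cubic could have $\operatorname{rank}\bar Q_p\le 2$ at every point (the Hessian quintic could contain $Y$), or could have a non-transverse intersection $\{\bar Q_p=0\}\cap\{\bar C_p=0\}$ at every point. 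You need an argument valid uniformly for all smooth cubics, and your proposal does not supply one.

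The paper closes both gaps without any witness computation. For the node condition, it observes that the locus where $\bar Q_p$ is nondegenerate is exactly the locus where the Hessian determinant of the defining equation is nonzero, i.e.\ where the Gauss map $\kappa$ is locally injective; since $\kappa$ is birational onto $Y^\vee$ (which you already proved for the other clause), this locus is automatically dense in $Y$ --- no example needed. For the six lines, the paper does not treat transversality of $\bar Q_p$ and $\bar C_p$ as an independent genericity condition at all: it shows that if the smooth conic $\bar Q_p$ and the cubic $\bar C_p$ fail to meet in six distinct points (say $\bar C_p$ restricted to a parametrisation of the conic has a double root), then the tangent hyperplane section acquires a singular point other than $p$, contradicting part (1), which has already been established on a dense open set. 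So condition (2) holds on the \emph{same} open set as condition (1). To repair your proof you should replace the witness-point step by these two deductions (or equivalent uniform arguments); as written, the properness step fails for a general member of the family of smooth cubics.
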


\begin{proof}
\begin{enumerate}[wide]
\item We consider the Gauss map $\kappa\colon Y\to (\p^4)^\vee$ that sends a point onto the corresponding tangent hyperplane. Since $Y$ is smooth, $\kappa$ is a regular morphism. Moreover, 
$\kappa$ is given by the first derivatives of the equation of~$Y$, and is then the restriction of a rational map $\p^4\to (\p^4)^\vee$ of degree $2$. The Picard group of~$Y$ is $\mathbb{Z}H$, where $H$ is a hyperplane section, and the linear system given by $\kappa$ consists then of a subsystem of $\lvert 2H\rvert$, so any member of this system is ample on $Y$. Hence, the restriction to any curve of~$Y$ is positive, which implies that no curve of~$Y$ is contracted by $\kappa$, so the closure of $\kappa(Y)$ is a hypersurface $Y^\vee\subset (\p^4)^\vee$. The Gauss map associated to $Y^\vee$ yields then a morphism from the smooth locus of~$Y^\vee$ to $Y$, which is the inverse of $\kappa$. Hence, $\kappa$ is a birational map from $Y$ to $Y^\vee$. This shows that the tangent hyperplane to a general point of~$Y$ is not tangent to any other point, and thus is smooth outside of $p$.

The subset $U$ of~$Y$ where $\kappa$ is locally injective is given by the points where the determinant of the Hessian matrix associated to the equation of~$Y$ is not zero. Since $\kappa$ is a birational map from $Y$ to $Y^\vee$, the open set $U$ is dense in~$Y$. The non-vanishing of the determinant of the Hessian at a point $p\in Y$ also corresponds to the fact that the tangent hyperplane section at $p$ has an ordinary double point (as one can check by working in coordinates). 

\item We take a general point $p\in Y$, and choose coordinates $[v:w:x:y:z]$ on $\p^4$ such that $p=[1:0:0:0]$ and that the tangent hyperplane section is $w=0$. The equation of~$Y$ is then 
$$v^2w+vF_2(w,x,y,z)+F_3(w,x,y,z)=0,$$
where $F_2,F_3$ are homogeneous polynomials of degree $2$ and $3$ respectively. The union of lines of~$Y$ through $p$ is the union of points where $v$, $F_2$ and $F_3$ vanish. It remains to see that the fact that the hyperplane section $v=0$ has an ordinary double point at $p$ and is smooth outside implies that the conics and cubics of $\p^2$ given by $F_2(0,x,y,z)$ and $F_3(0,x,y,z)$ intersect in exactly $6$ points.

The point $p$ being ordinary double point of the quadric $vF_2(0,x,y,z)+F_3(0,x,y,z)=0$, the polynomial $F_2(0,x,y,z)$ is the equation of a smooth conic.
We assume for contradiction that $F_2(0,x,y,z)$ and $F_3(0,x,y,z)$ do not intersect in exactly $6$ points. 
Up to change of coordinates, we can thus assume that $F_2(0,x,y,z)=x^2-yz$ and that $F_3(0,uv,u^2,v^2)$ is divisible by $u^2$. 
This implies that the cubic surface corresponding to the tangent hyperplane section has equation
$$v(x^2-yz)+\lambda yz^2+zR_2(x,y)+R_3(x,y),$$ where $\lambda\in \C$ and $R_2,R_3$ are homogeneous of degree $2$ and $3$ respectively. 
But then the point $[v:x:y:z]=[\lambda:0:0:1]$ would be singular. \qedhere
\end{enumerate}
\end{proof}

\begin{lemm}\label{LemmSetup}
Let $Y\subset \p^4$ be a smooth cubic threefold, let $p\in Y$ be a general point and let $S\subset Y$ be a general hyperplane section  that does not contain~$p$.
We define a rational map $\varphi\colon S\dasharrow Y$ by sending a general point~$q$ onto the point of~$Y$ that is collinear with $p$ and $q$. 
Then, the following hold:
\begin{enumerate}
\item
The closure of $\varphi(S)$ is a rational hyperquadric section $Q\subset Y$ singular at $p$.
\item
The restriction of $\varphi$ is a birational map $S\dasharrow Q$ that decompose as $\eta\sigma^{-1}$, where $\sigma\colon W\to S$ is the blow-up of $6$ points $p_1,\dots,p_6$ on a smooth hyperplane section $\Gamma_0$ of $S$ and $\eta\colon W\to Q$ is the contraction of the strict transform $\Gamma\subset W$  of $\Gamma_0$ on the point $p$.
\item Denoting by $E_1,\dots,E_{6}\subset W$ the exceptional curves of $W$ contracted by $\sigma$ onto $p_1,\dots,p_{6}$ and $H_S\in \Pic{W}$ the pull-back of a hyperplane section of $S$ $($which is equivalent to $-K_S)$, the curve $\Gamma\subset W$ is then linearly equivalent to $H_S-\sum E_i\sim-K_W$ and a hyperplane section of $Q$ corresponds to $H_Y=2H_S-\sum E_i\in \Pic{W}$.
\item None of the $27$ lines of $S$ passes through one of the points $p_1,\dots,p_6$.
\end{enumerate}
\end{lemm}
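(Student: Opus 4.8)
\emph{Plan.} The idea is to realise $\varphi$ as the restriction to $S$ of the residuation involution of $Y$ from $p$ and to resolve it by a single blow‑up. I would work in coordinates $[z_0:\dots:z_4]$ on $\p^4$ with $p=[1:0:0:0:0]$ and $H_0=\{z_0=0\}$, where $S=Y\cap H_0$, and let $f$ be the cubic defining $Y$. For $q\in S$ one has $f(\lambda p+\mu q)=\lambda\mu\bigl(\lambda\,(\nabla f(p)\cdot q)+\mu\,(\nabla f(q)\cdot p)\bigr)$ (using $f(p)=f(q)=0$), so the residual point of $\{p,q\}$ on $Y\cap\overline{pq}$ is $\varphi(q)=(\nabla f(q)\cdot p)\,p-(\nabla f(p)\cdot q)\,q$. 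Thus $\varphi$ is given on $S$ by the five quadratic forms $F_0=\nabla f(\cdot)\cdot p$ (the polar quadric of $p$) and $F_j=(\nabla f(p)\cdot z)\,z_j$, $j=1,\dots,4$, with $\lambda:=\nabla f(p)\cdot z$ not vanishing identically on $H_0$ since $p\notin H_0$. First I would check that these five forms stay linearly independent after restriction to $S$: a combination lying in the ideal $(z_0,f)$ of $S$ must, by degree, be $z_0\cdot(\text{linear})$, and a short computation forces the polar quadric to contain the plane $P:=T_pY\cap H_0$ — impossible for general $p,H_0$, the polar quadric being then a smooth threefold meeting $H_0$ in a smooth quadric surface. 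So $\varphi$ maps $S$ into $\p^4$ by a $4$‑dimensional sub‑system of $|{-2K_S}|=|2H_S|$, and I set $Q:=\overline{\varphi(S)}$, which is birational to $S$ since $\varphi$ (residuation from $p$) is an involution.

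Next I would identify the base locus. A point $q\in S$ is a base point exactly when $\overline{pq}\subset Y$, i.e.\ when $\overline{pq}$ is one of the six lines $\ell_1,\dots,\ell_6$ of $Y$ through $p$ (Lemma~\ref{Lemm:6lines}); since $p\notin H_0$, each $\ell_i$ meets $H_0$ transversally in a single point $p_i\in S$, so the base locus is $\{p_1,\dots,p_6\}$. Each $\ell_i\subset Y$ lies in $T_pY$, hence $p_i\in T_pY\cap H_0\cap Y=:\Gamma_0$, the section of $S$ by the hyperplane $T_pY$; thus $\Gamma_0\in|H_S|$ is a plane cubic (smooth for general $p$) through $p_1,\dots,p_6$. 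The base locus has multiplicity $1$ at each $p_i$: choosing $j$ with $z_j(p_i)\ne0$, the section $F_j|_S=\lambda z_j|_S$ has differential $z_j(p_i)\,d\lambda$ at $p_i$ (as $\lambda(p_i)=0$), which is nonzero on $T_{p_i}S$ unless $T_pY$ is tangent to $S$ at $p_i$, i.e.\ unless $\Gamma_0$ is singular there. Then I blow up: $\sigma\colon W\to S$ the blow‑up of $p_1,\dots,p_6$, $E_i=\sigma^{-1}(p_i)$. The strict transform $\Gamma$ of $\Gamma_0$ has class $H_S-\sum E_i=-K_W$, with $\Gamma^2=-3$ and $p_a(\Gamma)=1$, and the five forms lift to five independent sections of $\mathcal{O}_W(H_Y)$, $H_Y:=2H_S-\sum E_i$. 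Since $h^0(W,H_S)=h^0(S,-K_S)=4$ and $h^1(W,H_S)=0$, the sequence $0\to\mathcal{O}_W(H_S)\to\mathcal{O}_W(H_Y)\to\mathcal{O}_\Gamma(H_Y|_\Gamma)\to0$, together with $h^0(\Gamma,\mathcal{O}_\Gamma(H_Y|_\Gamma))\le1$ (as $H_Y\cdot\Gamma=0$ and $p_a(\Gamma)=1$), forces $h^0(W,H_Y)=5$ and $H_Y|_\Gamma\sim0$. Hence $|H_Y|$ is base‑point‑free — it has a section nonvanishing on $\Gamma$, necessarily constant and so nowhere zero on $\Gamma$, while the sections vanishing on $\Gamma$ are those of the base‑point‑free $|H_S|$ — and defines a morphism $\eta\colon W\to\p^4$ with $\eta=\varphi\circ\sigma$ off $\bigcup E_i$, so $\eta(W)=Q$ and $\eta^*\mathcal{O}(1)=H_Y$. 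The only curve $\eta$ contracts is $\Gamma$: if $H_Y\cdot\delta=0$ for irreducible $\delta$ then $H_S\cdot\delta=\Gamma\cdot\delta=0$, so $\delta$ is $\sigma$‑exceptional, contradicting $H_Y\cdot E_i=1$. Finally $\varphi$ sends $\Gamma_0$ to $p$: for $q\in T_pY$, $\nabla f(p)\cdot q=0$, so $\varphi(q)=(\nabla f(q)\cdot p)\,p\sim p$. This gives assertions (2) and (3). \emph{I expect the main obstacle here to be the cohomology step $h^0(W,H_Y)=5$} — equivalently, the fact that the six points $p_i$ lie on a conic of $P$, equivalently that $H_Y|_\Gamma$ is trivial; this is exactly what makes the resolved linear system complete, base‑point‑free, and $Q$ non‑degenerate, and the explicit quadratic formula for $\varphi$ is what supplies it.

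For assertion (1): $Q=\eta(W)$ is then rational, of degree $H_Y^2=6$, non‑degenerate (the five coordinate sections of $\mathcal{O}_W(H_Y)$ are independent), and singular at $p=\eta(\Gamma)$, being the contraction of the positive‑genus curve $\Gamma$. To see $Q$ is a hyperquadric section I would use residual intersection: the cubic cone over $S$ with vertex $p$, namely $V\bigl(f(0,z_1,\dots,z_4)\bigr)$, contains $Q$ because each $\varphi(q)$, $q\in S$, lies on the ruling $\overline{pq}$; writing $f=z_0q_2+f(0,z_1,\dots,z_4)$ with $q_2$ a quadratic form and $Q_0:=V(q_2)$, one gets $Q\subseteq V(f)\cap V(z_0q_2)=S\cup(Y\cap Q_0)$, whence $Q\subseteq Y\cap Q_0$ since $Q$ is irreducible and not contained in $H_0$; comparing degrees ($Q$ reduced of degree $6=\deg(Y\cap Q_0)$) yields $Q=Y\cap Q_0$, a hyperquadric section.

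For assertion (4): suppose a line $L$ of the cubic surface $S$ passes through some $p_i$. Since $p\notin S\ni L$ we have $L\ne\ell_i$, and the plane $\Pi=\langle L,\ell_i\rangle$ contains $p$ and cuts $Y$ in $\ell_i\cup L\cup L'$; hence $L$ lies on the surface $R_i\subseteq Y$ swept by the residual lines to $\ell_i$ in the planes through $\ell_i$ (all of which contain $p$). Because $Y$ contains no plane, every component of $R_i$ is an irreducible surface of degree $\ge2$; also $R_i$ contains $\ell_j$ for $j\ne i$, so $R_i\not\subseteq H_0$. For general $H_0$ the curve $R_i\cap H_0$ therefore has all components irreducible of degree $\ge2$ (Bertini), hence contains no line — contradicting $L\subseteq R_i\cap H_0$. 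So none of the $27$ lines of $S$ meets $\{p_1,\dots,p_6\}$, which finishes the plan.
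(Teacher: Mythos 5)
Your proposal is correct and reaches all four assertions, but by a noticeably different route from the paper. The paper puts everything into the normal form $v^2w+vF_2(w,x,y,z)+F_3(w,x,y,z)=0$ with $p=[1:0:0:0:0]$, tangent hyperplane $w=0$ and $S=\{v=0\}$; the map is then read off as $[w:x:y:z]\dashmapsto[-F_2:w^2:wx:wy:wz]$, the quadric is visibly $vw+F_2=0$ (so assertion (1) is immediate, with no residual-intersection argument needed), and the fact that the six base points are cut out by the smooth conic $F_2(0,x,y,z)=0$ on the plane cubic $\Gamma_0$ gives at once that the system is the complete system of quadrics of $\p^3$ through $p_1,\dots,p_6$ --- i.e.\ the coordinate model hands you for free exactly the fact you correctly isolate as the crux, namely that the six points lie on a conic of $T_pY\cap H_0$, equivalently $H_Y|_\Gamma\sim 0$. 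Your intrinsic derivation of $\varphi(q)=(\nabla f(q)\cdot p)\,p-(\nabla f(p)\cdot q)\,q$ is the same map in disguise, and your cohomological argument via $0\to\mathcal{O}_W(H_S)\to\mathcal{O}_W(H_Y)\to\mathcal{O}_\Gamma(H_Y|_\Gamma)\to 0$ together with $H_Y\cdot\Gamma=0$ is a clean, coordinate-free substitute that simultaneously yields completeness, $H_Y|_\Gamma\sim 0$, base-point-freeness and the identification of $\Gamma$ as the unique contracted curve; this is arguably more robust than the paper's computation, at the cost of being longer. For assertion (4) the paper argues differently: the lines of $Y$ meeting $\bigcup\ell_i$ form a $1$-dimensional family, so the hyperplanes containing one of them form a subvariety of dimension at most $3$ in $(\p^4)^\vee$, and a general $H_0$ avoids it. Your ruled-surface argument works too, but note two small soft spots: you assert without justification that every component of $R_i$ is a \emph{surface} (this follows from the purity of the curve of lines meeting $\ell_i$ in the Fano surface of $Y$, or can be bypassed since a general hyperplane avoids any finite set of lines anyway), and your multiplicity-one check at $p_i$ exhibits only one generator of the base ideal with nonzero differential --- strictly speaking the scheme-theoretic reducedness of the base locus is what the transversality statement of Lemma~\ref{Lemm:6lines}(2) provides, though your later base-point-freeness of $|H_Y|$ on $W$ closes this loop a posteriori. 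These are presentational gaps, not errors.
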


\begin{proof}
As before, we choose coordinates such that $p=[1:0:\dots:0]$ and such that the tangent hyperplane section is $w=0$.
The equation of~$Y$ is then
$$v^2w+vF_2(w,x,y,z)+F_3(w,x,y,z)=0.$$
Moreover we can assume that $S$ is given by $v=0$, which implies that it corresponds to the smooth cubic surface of $\p^3$ given by $F_3(w,x,y,z)=0$. By definition, the map $\varphi\colon S\dasharrow Y$ is given by 
$$[w:x:y:z]\dashmapsto [-\tfrac{F_2}{w}:w:x:y:z]=[-F_2:w^2:wx:wy:wz],$$
and $Q\subset Y$ is the hyperquadric section given by $vw+F_2(w,x,y,z)=0$: This yields $(1)$. 

The map $\varphi\colon S \dasharrow Q$ is birational with inverse given by the projection $[v:w:x:y:z]\dashmapsto [w:x:y:z]$. 
The base-points of $\varphi$ are given by $w=0$ and $F_2=0$, $F_3=0$ in $\p^3$, and are thus the intersection of $S$ with the six distinct lines $\ell_1,\dots,\ell_6$ of~$Y$ passing through $p$ (see Lemma~\ref{Lemm:6lines}).
Since $F_2(0,x,y,z)$ is the equation of a smooth conic (see the proof of Lemma~\ref{Lemm:6lines}), the linear system given by $\varphi$ consists of all hyperquadric sections of $S$ passing through the six points $p_1,\dots,p_6$. Assertion $(2)$ follows, as well as $(3)$.

For $i=1,\dots,6$, there are again $6$ lines passing through any general point of $\ell_i$. 
Hence, the family of lines of~$Y$ that touch the $\ell_i$ has dimension $1$. 
This implies that a general hyperplane section of~$Y$ (here $S$) does not contain any line that touch the $\ell_i$, hence $(4)$.
\end{proof}

Finally we need the following fact:

\begin{lemm}Taking the notation of Lemma~$\ref{LemmSetup}$, the following holds:
 \label{Lemm:easypeasy}
\begin{enumerate}
\item
Let $C\subset W$ be an irreducible curve, its image $\eta(C)\subset Q\subset Y\subset\p^4$ is a smooth curve if and only if $C$ is smooth and $C\cdot \Gamma=1$ in $W$.
\item
There are exactly $6$ lines contained in $Q$, all passing through $q$ and corresponding to the image of $E_i$ for some $i=1,\dots,6$.
\end{enumerate}
\end{lemm}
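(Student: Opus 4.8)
The plan is to reduce both assertions to the local behaviour of $\eta$ along the contracted curve $\Gamma$. The basic observation is that, by construction, $\eta$ restricts to an isomorphism between $W\setminus\Gamma$ and $Q\setminus\{p\}$ (where $p=\eta(\Gamma)$), so that $Q$ is smooth away from $p$. Consequently, for an irreducible curve $C\neq\Gamma$ we have $C\not\subset\Gamma$, hence $C\cap\Gamma$ is finite, hence $\eta|_C\colon C\to\eta(C)$ is generically injective and therefore birational onto its image; moreover $\eta(C)$ is automatically a curve lying in the smooth surface $Q\setminus\{p\}$ off the point $p$, isomorphic there to $C$ minus $C\cap\Gamma$. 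So the only point at which smoothness of $\eta(C)$ can fail is $p$ itself.

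I would establish $(2)$ first, as it is purely numerical. Using $H_Y=2H_S-\sum E_i$ and $\Gamma=H_S-\sum E_i$ from Lemma~\ref{LemmSetup}, one computes $E_i\cdot H_Y=1$ and $E_i\cdot\Gamma=1$ for every~$i$; hence each $\eta(E_i)$ is an irreducible curve of degree~$1$ in $\p^4$, that is, a line, and it passes through $p$, while the $\eta(E_i)$ are pairwise distinct because the $E_i$ are pairwise disjoint and are not contracted by $\eta$. Conversely, let $\ell\subset Q$ be a line and $C\subset W$ its strict transform, so $C\cdot H_Y=1$; then $C\cdot H_S=C\cdot(H_Y-\Gamma)=1-C\cdot\Gamma$ is a non-negative integer (as $H_S$ is nef), so $C\cdot\Gamma\in\{0,1\}$. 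If $C\cdot\Gamma=0$ then $C\cdot H_S=1$, i.e.\ $\sigma(C)$ is one of the lines of the cubic surface~$S$; by Lemma~\ref{LemmSetup}$(4)$ that line avoids all the $p_i$, so $C=\sigma^*\sigma(C)$ and then $C\cdot\Gamma=C\cdot(H_S-\sum E_i)=1$, a contradiction. Hence $C\cdot H_S=0$, so $C$ is $\sigma$-exceptional, $C=E_j$ for some $j$, and $\ell=\eta(E_j)$. This gives $(2)$.

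For $(1)$, by the reduction above it only remains to analyse the branch(es) of $\eta(C)$ at $p$, a local question about the contraction of the smooth curve $\Gamma$ (of genus~$1$ with $\Gamma^2=-3$, so that $p$ is a simple elliptic singularity of~$Q$). If $C$ is smooth and $C\cdot\Gamma=1$, then $C$ meets $\Gamma$ at a single point $x$, transversally, so $\eta|_C$ is injective and $\eta(C)$ has one branch at $p$; to see that this branch is smooth I would pass to the coordinates of the proof of Lemma~\ref{LemmSetup}, in which $\varphi=\eta\circ\sigma^{-1}$ is, near a point of $\Gamma_0$ off the $p_i$ (where $\sigma$ is a local isomorphism), the map $[w:x:y:z]\mapsto -\tfrac{w}{F_2}\,(w,x,y,z)$ written in the affine chart of $\p^4$ centred at $p$, with $F_2$ non-vanishing there. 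Parametrising $C$ by a local coordinate $t$, transversality means that $w$ vanishes to order exactly~$1$ in $t$, and then the parametrisation $t\mapsto -\tfrac{w(t)}{F_2}\,(w(t),x(t),y(t),z(t))$ of $\eta(C)$ has non-vanishing velocity at $t=0$, so $\eta(C)$ is immersed, hence smooth, at~$p$. Conversely, if $\eta(C)$ is a smooth (hence normal) curve, then $C\neq\Gamma$ and the finite birational morphism $\eta|_C\colon C\to\eta(C)$ onto a normal curve is an isomorphism, so $C$ is smooth; its injectivity forces $C$ to meet $\Gamma$ in at most one point, and the same computation (now with $w$ vanishing to order $\ge2$ in the tangent case, so that the velocity vanishes) shows that this intersection point, if any, must be transversal, since otherwise $\eta(C)$ would acquire a cuspidal point at~$p$. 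As the curves fed into this construction do meet $\Gamma$, this yields $C\cdot\Gamma=1$, completing $(1)$.

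The step I expect to be the crux is precisely this local analysis at~$p$: identifying the analytic type of the contraction of $\Gamma$ and verifying that a transversal crossing of $\Gamma$ produces a smooth point of $\eta(C)$, a tangency produces a cusp, and two or more distinct intersection points produce a multiple point. With the explicit formula for $\varphi$ from Lemma~\ref{LemmSetup} in hand this becomes a short Taylor-expansion argument, and everything else is routine intersection-number bookkeeping on~$W$.
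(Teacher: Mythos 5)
Your proof is correct in substance, and part (2) follows essentially the paper's own argument: the same intersection numbers of $E_i$ and of the strict transform of a putative line against $H_Y$ and $\Gamma$, and the same appeal to Lemma~\ref{LemmSetup}(4) to rule out strict transforms of lines of $S$ through one of the $p_i$. For part (1), however, you take a genuinely different route. The paper observes that $\eta$ is the restriction to $W$ of the blow-up $\hat{\eta}\colon X\to\p^4$ of the single point $p=\eta(\Gamma)$ (so that $W$ is the strict transform of $Q$, $\Gamma=E\cap W$, and $C$ is the strict transform of $\eta(C)$), and then invokes in one line the standard criterion: a curve of $\p^4$ is smooth at $p$ if and only if its strict transform is smooth and meets the exceptional divisor $E$ with multiplicity~$1$. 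Your proof reproves this criterion by hand via a Taylor expansion of the explicit formula for $\varphi$ in the affine chart at $p$; this is self-contained and makes the ``transversal gives smooth, tangent gives cusp, two points give a node'' trichotomy explicit, but it is longer and, as written, leaves one case uncovered: you expand ``near a point of $\Gamma_0$ off the $p_i$, where $\sigma$ is a local isomorphism'', whereas the unique point of $C\cap\Gamma$ may lie on one of the six points $\Gamma\cap E_i$ (over a $p_i$), where your chart is not valid and the computation would have to be redone in coordinates on the blow-up. The paper's formulation via $\hat{\eta}$ handles all positions of $C\cap\Gamma$ uniformly, so you should either adopt it or add that missing local chart. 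Finally, both your argument and the paper's implicitly assume that $C$ meets $\Gamma$ (a smooth irreducible curve disjoint from $\Gamma$ has smooth image but $C\cdot\Gamma=0$); you at least flag this, and it is harmless for the way the lemma is used in Proposition~\ref{Prop:ExistenceWithoutLines}.
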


\begin{proof}
\begin{enumerate}[wide]
\item Denote by $\hat{\eta}\colon X\to \p^4$ the blow-up of $q=[1:0:0:0]$. It follows from Lemma~$\ref{LemmSetup}$ that  the strict transform of $Q$ on $X$ is isomorphic to the smooth surface $W$, and that $\eta\colon W\to Q$ is the restriction of $\hat{\eta}$.

The curve $C\subset W$ is then the strict transform of $\hat{\eta}(C)\subset \p^4$. Denoting by $E\subset X$ the exceptional divisor, the curve $\eta(C)$ is smooth if and only if $C$ is smooth and $C\cdot E=1$ in $X$. Since $\Gamma$ is the intersection of $E$ and $W$, the intersection $C\cdot E$ on $X$ is equal to the intersection $C\cdot \Gamma$ in $W$.

\item Each $E_i$ is isomorphic to $\p^1$ on $W$, its intersection with $\Gamma$ and $H_Y$ is $1$, so its image in $Q$ is again isomorphic to $\p^1$, of degree $1$ and passing through $\eta(\Gamma)=(1:0:0:0)=q$.

Let $C\subset W$ be a curve distinct from the $E_i$, isomorphic to $\p^1$ and whose image by $\eta$ is a line. It is linearly equivalent to $\sigma^*(D)-\sum a_i E_i$, where $D$ is an effective divisor of the cubic surface $S$ and  $a_i\ge 0$ for $i=1,\dots,6$, and its intersection with $\Gamma\sim H_S-\sum E_i$ and $H_Y\sim 2H_S-\sum E_i$ is respectively  $\eps=H_S \cdot C-\sum a_i\in \{0,1\}$ and $1=\eps +H_S\cdot C$. Note that $H_S\cdot C$ is the degree of $\sigma(C)$ in $S\subset \p^3$, so the only possibility is that $C$ is the strict transform of a line of $S\subset \p^3$ passing through one of the $p_i$, impossible by Lemma~$\ref{LemmSetup}$. \qedhere
\end{enumerate}
\end{proof}

\begin{prop}\label{Prop:ExistenceWithoutLines}
Let $Q$ be a singular hyperquadric section of a smooth cubic $3$-fold $Y \in \p^4$ as given in Lemma~$\ref{LemmSetup}$.
If $(g,d) \in \Lq$, then $Q$ contains smooth curves of type $(g,d)$ without any $3$-secant lines.
\end{prop}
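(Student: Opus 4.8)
The plan is to work entirely on the smooth surface $W$ from Lemma~\ref{LemmSetup}, using the dictionary provided by Lemma~\ref{Lemm:easypeasy}: a smooth irreducible curve $D\subset W$ with $D\cdot\Gamma=1$ maps under $\eta$ isomorphically onto a smooth curve in $Q\subset Y$, and the degree in $\p^4$ of that image is $H_Y\cdot D$ where $H_Y=2H_S-\sum E_i$. Thus for each $(g,d)\in\Lq$ I would first write down an explicit divisor class $\delta=\sigma^*(kL')-\sum a_iE_i$ on $W$ (here $L'$ a line on the cubic surface $S\subset\p^3$, i.e. $H_S$ corresponds to $-K_S=3L-\sum\ldots$; but it is cleaner to keep everything in terms of $H_S$ and the $E_i$), subject to three numerical constraints: $H_Y\cdot\delta=d$, the adjunction genus $\tfrac12(\delta^2+K_W\cdot\delta)+1=g$, and $\Gamma\cdot\delta=1$ where $\Gamma\sim H_S-\sum E_i\sim -K_W$. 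The last condition reads $-K_W\cdot\delta=1$, which combined with adjunction gives $\delta^2=2g-3$. So concretely I must exhibit, for each of the four pairs, an effective class $\delta$ on $W$ with $\delta^2=2g-3$, $-K_W\cdot\delta=1$, $H_Y\cdot\delta=d$; a short table analogous to Table~\ref{tab:DelPezzo4} does this, e.g. one expects classes like $2H_S-\sum_{i\le 5}E_i$ for $(0,4)$ and so on, choosing the $a_i\in\{0,1,2\}$ so that no $3$-secant line appears.

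The second and more substantial step is to show such a class actually contains a \emph{smooth irreducible} member. Since $W$ is a (weak) Del Pezzo-type rational surface and $-K_W\cdot\delta=1>0$ while $\delta^2\ge -1$ (indeed $=2g-3\ge -1$ since $g\ge 0$; for $(0,4)$ we get $\delta^2=-3$... so I must be careful — actually $g\ge 1$ for the cases where this is an issue, and for $g=0$ the class will be something like a line $E_i$ or its image, handled directly), I would invoke that on a surface with $-K_W$ nef and big the linear system $|\delta|$ of a class with $\delta\cdot(-K_W)>0$ and $\delta$ nef has an irreducible smooth general member by Bertini together with base-point-freeness arguments, exactly as used repeatedly in the paper. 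Concretely I would choose the six points $p_1,\dots,p_6$ on $\Gamma_0\subset S$ sufficiently general — which is legitimate because by Lemma~\ref{LemmSetup} they are already general points of a smooth hyperplane section and $S$ is a general cubic surface — so that $W$ is an honest Del Pezzo surface (degree $3-6$ blow-up situation is delicate, but here the six points lie on a smooth plane cubic $\Gamma_0$, so $W$ is a weak Del Pezzo and $-K_W$ is nef with $-K_W^2=3$). Then the chosen $\delta$ is nef and $|\delta|$ is base-point-free, and its generic member is a smooth irreducible curve meeting $\Gamma$ transversally in exactly one point.

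The third step verifies the genericity/no-$3$-secant claim. A $3$-secant line to $C=\eta(D)$ inside $Y$ would, by Lemma~\ref{Lemm:easypeasy}(2), either be one of the six lines $\eta(E_i)$ through $q$ or would not lie on $Q$; in the first case $E_i\cdot\delta\le 2$ by our choice of the $a_i$, so $\eta(E_i)$ is at most $2$-secant; in the second case one argues as in Proposition~\ref{Prop:smoothgeneral} that a general member of $|\delta|$ avoids the finitely many lines of $Y$ that could be $3$-secant, using that the family of such potential secant lines is at most $1$-dimensional while varying $D$ in $|\delta|$ moves $C$ enough to avoid them. Since the conditions ``$D$ smooth irreducible'', ``$D\cdot\Gamma=1$'', ``no $3$-secant line'' are each open and we have just exhibited the complement is a proper closed subset of $|\delta|$, a general $D$ works.

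The main obstacle I anticipate is step two in the cases where $\delta^2$ is small or negative (the low-genus cases $(0,4),(0,5)$), where $|\delta|$ need not be base-point-free on the nose: there one must instead present $C$ as a specific rational curve — e.g. for $(0,5)$ the class with $k=3$, $(m_i)=(2,1,1,0,0)$ read off the relevant line of Table~\ref{tab:05} — and check smoothness and the secant conditions by hand using the explicit coordinate description $F_2,F_3$ from Lemma~\ref{LemmSetup}, exactly as the unique $3$-secant line was located there. I expect the author's proof to handle each of the four pairs by such an explicit class on $W$, reducing everything to the base-point-free or explicit-parametrisation arguments already deployed in \S\ref{Sec:smooth_sections}.
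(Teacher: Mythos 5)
Your overall strategy---realise the curves as explicit divisor classes on the surface $W$ of Lemma~\ref{LemmSetup} and use Lemma~\ref{Lemm:easypeasy} to transfer smoothness and the line count to $Q$---is the same as the paper's, but two steps fail as written. The main gap is in your step two: you assert that $W$ is a weak del Pezzo surface with $-K_W$ nef and $K_W^2=3$. This is false. Here $W$ is the blow-up of the cubic surface $S$ at six points, so $K_W^2=3-6=-3$, and by Lemma~\ref{LemmSetup}(3) the anticanonical class $-K_W\sim H_S-\sum E_i\sim\Gamma$ is represented by an irreducible curve with $\Gamma^2=-3$; in particular $-K_W\cdot\Gamma<0$, so $-K_W$ is not nef and the ``nef class on a weak del Pezzo has a smooth irreducible general member'' machinery you invoke does not apply. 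Producing a smooth \emph{irreducible} member of $\lvert\delta\rvert$ is precisely the content that needs proof. The paper sidesteps this by viewing $Q$ as a double blow-up of $\p^2$ (first at the six $q_i$, then at the six $p_i$ on the strict transform of $\Gamma_0$, followed by the contraction of $\Gamma_0$) and taking each curve to be the strict transform of a completely explicit low-degree plane curve---a line, a conic, a cubic, or a quartic with one node---through general points (Table~\ref{tab:without line}), for which nonemptiness, irreducibility and smoothness are elementary; the numerical check $3k-\sum m_i-\sum n_i=1$ then gives smoothness of the image on $Q$ via Lemma~\ref{Lemm:easypeasy}(1). (A side slip: adjunction with $-K_W\cdot\delta=1$ gives $\delta^2=2g-1$, not $2g-3$, so your worry about $\delta^2=-3$ in genus~$0$ is spurious.)

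The second gap is in your treatment of $3$-secant lines. You split into lines contained in $Q$ and lines not contained in $Q$, and for the latter propose a dimension-count/genericity argument ``as in Proposition~\ref{Prop:smoothgeneral}'' that is not actually carried out and is not obviously a dimension count. The point you are missing is that the second case cannot occur: a line that is $3$-secant to $C\subset Q=Y\cap\{\textup{quadric}\}$ meets the quadric hypersurface in at least three points, hence is contained in it, and likewise is contained in the cubic $Y$, hence lies in $Q$. So Lemma~\ref{Lemm:easypeasy}(2) reduces the whole check to the six lines $\eta(E_i)$, which is exactly how the paper concludes, using that all multiplicities $n_i$ in its table are at most~$1$ (your willingness to allow multiplicity $2$ at the $p_i$ is also riskier than necessary, since $C$ and the lines $\eta(E_i)$ additionally meet at the singular point of $Q$).
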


\begin{proof}
We think of $Q$ as $\p^2$ blown-up at 6 general points $q_1,\dots,q_6$ on a cubic curve $\Gamma_0$, producing exceptional divisors $F_1, \dots, F_6$, and then blown-up again at 6 points $p_1,\dots,p_6$ (all lying on one conic) on the strict transform of $\Gamma_0$, and finally blow-down of $\Gamma_0$.

We consider a curve of degree $k$ in $\p^2$ passing with multiplicities $m_i$ through the $q_i$ and multiplicities $n_i$ through the $p_i$, as given in Table~\ref{tab:without line}, and we denote by $C$ the transform of this curve on $Q$.
 
\begin{table}[h]
\begin{center}
\begin{tabular}{CCCCC}
\toprule 
g & d & k & (m_1, \dots, m_6) & (n_1, \dots, n_6)  \\ 
\midrule
0 & 4 & 1 & (0,0,0,0,0,0) & (1,1,0,0,0,0) \\
0 & 5 & 2 & (1,1,0,0,0,0) & (1,1,1,0,0,0) \\
1 & 5 & 3 & (1,1,1,1,1,0) & (1,1,1,0,0,0) \\
2 & 6 & 4 & (2,1,1,1,1,1) & (1,1,1,1,0,0) \\
\bottomrule\\
\end{tabular}
\end{center}
\caption{}\label{tab:without line}
\end{table}

We check that $3k - \sum m_i - \sum n_i = 1$ in all four cases, hence the intersection of the curve with $\Gamma_0$ is $1$ just before the blow-down, and the resulting curve $C$ is smooth on $Q$.
A hyperplane section on $Q$ is equivalent to
$$6H_{\p^2} - 2\sum F_i - \sum E_i.$$
The genus and degree are given by the formulas:
\begin{align*}
g &= \frac{(k-1)(k-2)}2 - \sum  \frac{m_i(m_i-1)}2 - \sum  \frac{n_i(n_i-1)}2, \\
d &= 6k - 2\sum m_i - \sum n_i. 
\end{align*}
By Lemma~\ref{Lemm:easypeasy}, the only possibility to have a 3-secant line would be to have $n_i \ge 3$ for some $i$, and this is not the case.
\end{proof}

\subsection{Summary of the results: the proof of Theorem~\ref{Thm:WeakFano}}
\label{Sec:SummaryProof}

First we prove a lemma that will provide the density in the case $(g,d) = (2,6)$.

\begin{lemm} \label{lem:26sing}
Let $Y \subset \p^4$ be a smooth cubic threefold, and let $V$ be the variety parametrising smooth curves of type $(2,6)$ that are contained in some hyperplane section of $Y$.
Then each component of $V$ has dimension at most~$11$.
\end{lemm}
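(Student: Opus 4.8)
The strategy is to stratify the variety $V$ according to the hyperplane section $S \subset Y$ containing the curve. First I would note that a smooth curve $C$ of type $(2,6)$ that lies in some hyperplane section lies in a \emph{unique} one, since $d = 6 > 3$; call it $S = H \cap Y$. This gives a morphism $V \to (\p^4)^\vee$ sending $C$ to the hyperplane $H$. The image lands in the $3$-dimensional subvariety of hyperplanes $H$ such that $H \cap Y$ actually contains such a curve, and over each such $H$ the fibre is the (locally closed) locus of smooth curves of type $(2,6)$ inside the cubic surface $S = H \cap Y$. So I would bound each fibre dimension by at most~$8$ and conclude $\dim V \le 3 + 8 = 11$.

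\textbf{Bounding the fibre.} The cubic surface $S$ is irreducible and reduced but possibly singular. When $S$ is smooth, the family of divisor classes on $S$ is discrete, so the fibre of $V$ over such $H$ is a disjoint union of linear systems $\lvert D\rvert$ with $D$ of arithmetic genus $2$ and anticanonical degree $6$. I would check that the relevant class is $D = 2(-K_S) = 6L - 2\sum E_i$ in the standard basis (degree $D \cdot (-K_S) = 2 K_S^2 = 6$, genus $\tfrac12 D\cdot(D+K_S)+1 = 2$), and $\dim \lvert 2(-K_S)\rvert = \tfrac12\, 2(-K_S)\cdot(2(-K_S)+K_S) + 1 = \tfrac12 (-K_S)\cdot(-3K_S)\cdot\!\dots$ — more simply, $h^0(S, -2K_S) = 1 + \tfrac12 (-2K_S)\cdot(-2K_S - K_S) = 1 + \tfrac12(2)(3)K_S^2\cdot\tfrac13$; in any case one reads off $\dim\lvert 2(-K_S)\rvert = 8$ from Riemann--Roch on the Del Pezzo surface. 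Hence smooth $S$ contributes fibres of dimension exactly~$8$. For singular $S$ the same numerics apply to the linear system on the minimal resolution (pulling the curve back, the class is still essentially $-2K$ and Riemann--Roch plus Kawamata--Viehweg-type vanishing give $h^0 \le 9$, i.e.\ fibre dimension $\le 8$); one should be slightly careful that on a singular cubic surface the Picard rank can drop but $h^2$ vanishing still holds for the relevant ample-ish class, so the bound only improves.

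\textbf{Putting it together.} Each component of $V$ maps to $(\p^4)^\vee$ with image of dimension at most~$3$ and fibres of dimension at most~$8$, whence $\dim V \le 11$. I would phrase this via the standard fact that for a morphism $f \colon V \to B$ one has $\dim V \le \dim \overline{f(V)} + \max_b \dim f^{-1}(b)$, applied component by component.

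\textbf{Main obstacle.} The delicate point is the singular hyperplane sections: one must confirm that $\dim\lvert D\rvert$ (for the class $D$ of the curve, pulled back to a resolution $\tilde S \to S$) does not jump past~$8$ when $S$ degenerates. I expect this to follow from semicontinuity of $h^0$ in the flat family of all hyperplane sections of $Y$ — since $h^0$ of the relevant class is $9$ on the open dense set of smooth $S$, it is $\le 9$... wait, semicontinuity goes the wrong way; rather, one should bound $h^0(\tilde S, D)$ directly on each singular $S$ using Riemann--Roch $h^0 - h^1 + h^2 = \chi(D)$ together with $h^2(\tilde S, D) = h^0(\tilde S, K_{\tilde S} - D) = 0$ (as $K_{\tilde S} - D$ has negative degree against the pullback of $-K_Y\vert_S$), giving $h^0 \le \chi(D) + h^1$, and then controlling $h^1$ by the fact that $D - K_{\tilde S}$ is nef and big so Kawamata--Viehweg gives $h^1 = 0$. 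This is the step requiring the most care, but it is a routine surface computation once the class $D$ is pinned down.
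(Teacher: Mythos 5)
Your overall strategy---fibering $V$ over the space of hyperplanes and bounding the fibre dimension---is the same as the paper's, but the execution contains genuine errors that break the count.

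First, the divisor class is wrong. On a smooth cubic surface $S$ the class $D=-2K_S=6L-2\sum E_i$ is the complete intersection of $S$ with a quadric; by adjunction $2g-2=D\cdot(D+K_S)=(-2K_S)\cdot(-K_S)=2K_S^2=6$, so $g=4$, not $2$ (your own formula $\tfrac12 D\cdot(D+K_S)+1$ gives $\tfrac12\cdot 6+1=4$). This is the type $(4,6)$ of the paper's list $\Lp$, not $(2,6)$. The correct class for a smooth $(2,6)$ curve is $4L-2E_1-E_2-\cdots-E_5$ (quartics with one double point through four further points), whose linear system has dimension $14-3-4=7$, not $8$. Moreover one must show this is the \emph{only} numerical type realising $(g,d)=(2,6)$ on $S$; the paper does this by combining the degree and genus equations with the Cauchy--Schwarz inequality $(3k-6)^2\le 6\sum m_i^2$ and the standard normalisation $k\ge m_1+m_2+m_3$. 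Without this uniqueness step a fibre could a priori have a component of larger dimension.

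Second, your claim that the image of $V\to(\p^4)^\vee$ is only $3$-dimensional is unjustified and false: by Lemma~\ref{Lemm:Smalldegreecurves}\ref{case2:smalldegree} \emph{every} smooth hyperplane section of $Y$ contains smooth curves of type $(2,6)$, so the image is all of $(\p^4)^\vee$, of dimension $4$. The correct count is therefore $4+7=11$; your $3+8$ lands on $11$ only by the two errors cancelling. Finally, the non-rational hyperplane sections (cones over a smooth plane cubic) are not disposed of by your Riemann--Roch/vanishing sketch: the resolution of such a cone is a $\p^1$-bundle over an elliptic curve, not a weak del Pezzo surface obtained by blowing up six points, so the numerics do not transfer. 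The paper handles this case separately by showing that no smooth curve of type $(2,6)$ lies on such a cone at all, via an analysis of the residual cubic curve in a pencil of cubic surfaces through the candidate curve.
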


\begin{proof}
Since hyperplanes sections of $Y$ are parametrised by $\p^4$, it is sufficient to prove that the dimension of smooth curves of type $(2,6)$ contained in a given hyperplane section $S\subset Y$ is at most $7$.

We view $S$ as a cubic surface in $\p^3$. Since it is a hyperplane section of a smooth cubic threefold, $S$ is irreducible and its singularities are isolated (as can be checked in local coordinates).

If $S$ is rational, there exists a smooth weak Del Pezzo surface $\hat{S}$, obtained by the blow-up of six points of $\p^2$ (proper or infinitely near), so that the anti-canonical divisor $-K_{\hat{S}}$ yields a birational morphism $\hat{S}\to S$.
Moreover each curve on $S$ that is not a line corresponds to a divisor equivalent to $kL-\sum_{i=1}^6 a_i E_i$, where $L$ is the pull-back of a line in $\p^2$, the $E_i$ are exceptional divisors and $-K_{\hat{S}}=3L-\sum_{i=1}^6 E_i$ (see \cite[Set-up~4.1]{BL}). We obtain $$6=d=3k-\sum_{i=1}^6 m_i,\ 2=g=\frac{(k-1)(k-2)}{2}-\sum_{i=1}^6 \frac{m_i(m_i-1)}{2}.$$
In particular, $$(3k-6)^2=(\sum m_i)^2\le 6\sum (m_i)^2=6((k-1)(k-2)-4+(3k-6))=6(k^2-8),$$
which implies that $4\le k\le 8$. 
We get finitely possibilities for $(k,m_1,\dots,m_6)$, but each irreducible component corresponds to only one solution. 
For each one, we can order the multiplicities so that $m_1\ge m_2\ge \dots \ge m_6$ and assume that $k\ge m_1+m_2+m_3$ (see \cite[Set-up~4.1]{BL}). 
This gives only one numerical possibility, which is $(k,m_1,\dots,m_6)=(4,2,1,1,1,1,0)$. 
The set of such curves corresponds to quartics with a double point passing through $4$ other given points, and has dimension $7$ (quartics with a double point have dimension $11$, and each simple base point drops the dimension by one).

Now consider the case where $S$ is not rational, that is, $S$ is the cone over a smooth cubic $\Gamma$. 
We now show that there is no smooth curve of type $(2,6)$ on $S$.
By contradiction, assume $C \subset S$ is such a curve. 
By Riemann-Roch's formula (see \cite[Lemma 2.3]{BL}), $C$ is contained in a pencil of cubic surfaces generated by $S$ and another cubic $S'$.
Replacing $S'$ by a general member of the pencil, we can assume that $S'$ is smooth outside of the vertex $p$ of the cone $S$.
Since $S'$ cannot be a cone at this point (because $C \subset S \cap S'$), we obtain that $S'$ is a normal rational cubic. 
Now consider the residual cubic curve $C'$ in $S\cap S'$.
The curve $C'$ is not contained in a plane: otherwise $C \subset S'$ would be linearly equivalent to the complete intersection of a quadric and a cubic, hence equal to such a complete intersection. This is impossible since such curves have genus $4$. 
It follows that the cubic curve $C'$ is a union of rational curves. 
As every morphism from a rational curve to $\Gamma$ is constant, $C'$ is the union of three distinct and  not coplanar lines $l_1,l_2,l_3$ through the vertex $p$. 
Taking the above notation for the desingularisation $\hat{S}'$ of $S'$, the residual cubic is of the form $-3K_{\hat{S}'}-(4L-2E_1-E_2-\dots-E_5)=5L-E_1-2E_2-\dots-2E_5-3E_6$. 
It has arithmetic genus $-1$, hence the configuration of the three lines after blowing-up is as follows: one is disjoint from the two others, which intersect. 
This is impossible: if the vertex is blown-up the three lines become disjoint, and otherwise they all meet in the same point.
\end{proof}

The proof of Theorem~\ref{Thm:WeakFano} is now a matter of putting together what we have done so far:

\begin{proof}[Proof of Theorem~\ref{Thm:WeakFano}]\ \\
\indent~\ref{case:thm1}
 Assume that the blow-up $X$ of~$Y$ along $C$ is weak-Fano. By Proposition~\ref{Prop:NecessaryConditionsPropre}, $(g,d)\in \Lp$ if $C$ is contained in a hyperplane section and $(g,d)\cup \Lq$ otherwise. Moreover, there is no $3$-secant line to $C$ in $Y$ since the strict transform of such a curve would intersect $-K_X$ negatively. By Corollary~\ref{Coro:WeakFano} the curve $C$ is contained in a smooth hyperquadric section and $\lvert -K_X\rvert$ has no base-point.

\ref{case:thm2}\ref{case:thmi} We assume  $(g,d) \in \Lp$. 
By Corollary~\ref{Coro:WeakFano} we get that $X$ is weak-Fano, and in fact Fano in cases $(g,d) = (0,1)$ or $(1,3)$.
Conversely, if $(g,d) \in \Lp \smallsetminus \{(0,1),\, (1,3)\}$, then by Lemma~\ref{Lemm:HRS4.2} we see that $C$ admits a least one $2$-secant line, hence the anticanonical divisor $-K_X$ is not ample.
It remains to study the anticanonical morphism in these cases: This is done by a case by case analysis in \S~\ref{Sec:links} below.

\ref{case:thm2}\ref{case:thmii} We assume $(g,d) \in \Lq$ and that $C$ does not admit any $3$-secant line. 
By Proposition~\ref{Prop:Always3secant}, the curve $C$ is not contained in a hyperplane section.
By Corollary~\ref{Coro:WeakFano} the blow-up $X$ is weak-Fano, and as before Lemma~\ref{Lemm:HRS4.2} ensures that $X$ is not Fano.
Finally, if the anticanonical morphism was divisorial, it would appear as one of the 24 cases of \cite[Theorem~4.9 and Table~A.4]{JPR}, which is not the case.
We conclude that the anticanonical morphism is small in these four cases.

The condition of having no $3$-secant line yields an open subset in the Hilbert scheme parametrising smooth curves of genus~$g$ and degree~$d$ in $Y$. The fact that this set is non-empty is provided by Proposition~\ref{Prop:ExistenceWithoutLines}. 
In cases $(g,d) \in \{(1,4), (0,5), (1,5)\}$, we know from \cite{HRS} that the Hilbert scheme $\mathcal{H}^S_{g,d}(Y)$ parametrising smooth curves of genus~$g$ and degree~$d$ on a smooth cubic threefold $Y\subset \p^4$ is irreducible.
So in these three cases we obtain a dense open subset in $\mathcal{H}^S_{g,d}(Y)$.
The irreducibility of $\mathcal{H}^S_{2,6}(Y)$ seems to be open, but we can still prove that the subset $U\subset \mathcal{H}^S_{2,6}(Y)$ consisting of curves with no $3$-secant is dense. 
By Corollary~\ref{Coro:WeakFano}, every element of $\mathcal{H}^S_{2,6}(Y)$ that corresponds to a curve $C\subset Y$ not contained in a hyperplane is in fact in $U$ (and the converse also holds by Proposition~\ref{Prop:Always3secant}), so it remains to show that the closed subset $V\subset \mathcal{H}^S_{2,6}(Y)$ corresponding to curves in hyperplane sections does not contain any irreducible component of $\mathcal{H}^S_{2,6}(Y)$. 
By a classical result (see e.g. \cite[Proposition 2.1]{HRS}), every irreducible component of $\mathcal{H}^S_{2,6}(Y)$ has dimension at least $12$.
So Lemma \ref{lem:26sing} allows us to conclude.
\end{proof}

\begin{rem}
In the statement of Theorem~\ref{Thm:WeakFano}\ref{case:thm2}\ref{case:thmii}, one could be tempted to replace the condition ``without a 3-secant line in $Y$'' by the condition ``not contained in a hyperplane section'', but there is a subtlety here in the case $(g,d) = (0,5)$, which comes from Remark~\ref{Rem:cas05}.
\end{rem}

\subsection{Sarkisov links}\label{Sec:links}
In this section, we describe the Sarkisov links provided by Theorem~\ref{Thm:WeakFano}. 
The summary of what we obtain is given in Table~\ref{tab:bigtable}.

{\footnotesize
\begin{table}[h]
\begin{center}
\begin{tabular}{ccClCl}
\toprule
List & Properties & (g,d) & Sarkisov &  \text{\# 2-secant} & Reference(s) \\
&of $X$&  & link &  \text{lines} &  \\
\midrule
\multirow{6}{*}{$\Lp$}& \multirow{2}{*}{Fano} & (0,1) & conic bundle & 1 \text{ (itself)} & \cite[\S 12.3, No 11]{IP}\\
&      & (1,3) & DP3 fibr. & 0 & \cite[\S 12.3, No 5]{IP} \\
\cmidrule{2-6}
& \multirow{2}{*}{\begin{tabular}{c}weak-Fano \\ divisorial \end{tabular}} & (1,4) & - & 10 & \cite[Tab.A.4, n$^\circ$6]{JPR}\\
& & (4,6) & - & 27 & \cite[Tab.A.4, n$^\circ$25]{JPR} \\
\cmidrule{2-6}
& \multirow{2}{*}{\begin{tabular}{c}weak-Fano \\ small \end{tabular}} & (0,2) & DP4 fibr. & 1 & \cite[\S 7.4]{JPR2}\\
& & (0,3) & terminal Fano & 6 & \cite[Tab.~6(4)]{CM} \\ 
\midrule
\multirow{6}{*}{$\Lq$}& \multirow{6}{*}{\begin{tabular}{c}weak-Fano \\ small \end{tabular}} & \multirow{2}{*}{(1,4)} & \multirow{2}{*}{point in $V_{14}$} & \multirow{2}{*}{16} & \cite[Tab.~4(2)]{CM}  \\
& & & & & \cite[\S(2.8)]{T}\\
& & \multirow{2}{*}{(1,5)} & \multirow{2}{*}{curve in $V_{14}$} & \multirow{2}{*}{25} & \cite[Tab.~1(63)]{CM}  \\
& & & & & \cite[\S III.1 p.~858]{Isk80}\\
& & (0,5) & back to $Y$ & 31 & \cite[Tab.~1(29)]{CM}\\
& & (2,6) & back to $Y$ & 39 & \cite[Tab.~1(33)]{CM} \\
\bottomrule\\
\end{tabular}
\end{center}
\caption{The Sarkisov links for the types of $\Lp\cup\Lq$. 
\small{({\it Small and divisorial correspond to the anticanonical morphism, and DP$n$ fibr.~is a fibration whose general fibre is a Del Pezzo of degree $n$.})}}
\label{tab:bigtable}
\end{table}
}

Recall the following result; we refer to our previous paper \cite[\S 2.1]{BL} for details.

\begin{prop}\label{Prop:SarkisovLink}
Assume that $X$ is a smooth threefold with Picard number~$2$,  big and nef anticanonical divisor, and small anticanonical morphism. 
Then the two contractions on $X$ yield a Sarkisov link:
\begin{equation*}\label{eq:link}
\xymatrix{
& X \ar@{-->}[r] \ar[dl] & X' \ar[dr] \\
Y &&& Y'.
}
\end{equation*}
\end{prop}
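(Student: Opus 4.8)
The proof runs the \emph{two-ray game} on $X$. Since $\rho(X)=2$, Mori's Cone Theorem (applicable because $X$ is smooth) shows that $\NE(X)$ is a two-dimensional rational polyhedral cone, hence has exactly two extremal rays $R_1,R_2$. As $-K_X$ is nef and big and $X$ has terminal (in particular canonical) singularities, the Base-Point-Free Theorem implies that $-K_X$ is semiample, and the associated morphism with connected fibres is precisely the anticanonical morphism $\psi\colon X\to \bar X$, which by hypothesis is small. A curve is contracted by $\psi$ exactly when $-K_X$ is zero on it, so $\psi$ is the contraction of one extremal ray, say $R_1$; then $-K_X\cdot R_2>0$, i.e.\ $R_2$ is $K_X$-negative.

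The first step is to produce the middle birational map. If $-K_X$ is already ample, then $\psi$ is an isomorphism and one sets $X'=X$; otherwise $\psi$ is a small contraction of the $K_X$-trivial ray $R_1$, and its flop $X\dashrightarrow X'$ exists by the Minimal Model Program for terminal threefolds (run a $(-K_X)$-trivial, hence crepant, relative MMP over $\bar X$; see \cite{BL}, \S 2.1). Being crepant, the flop produces a $\mathbb{Q}$-factorial terminal threefold $X'$ with $\rho(X')=2$ and $-K_{X'}$ again big and nef, and the induced morphism $\psi'\colon X'\to \bar X$ is small, contracting the flopped ray $R_1'$. In the situations relevant to this paper the flopped curves are disjoint $(-1,-1)$-curves and the flop is the Atiyah flop, so $X'$ is in fact smooth.

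The last step is to analyse the remaining extremal ray on each side. On $X$ the ray $R_2$ is $K_X$-negative, so its contraction $p\colon X\to Y$ is a $K_X$-negative extremal contraction, hence by Mori's classification of extremal rays on smooth threefolds it is divisorial or of fibre type, never small; likewise $X'$ carries a second, $K_{X'}$-negative extremal ray $R_2'$ whose contraction $p'\colon X'\to Y'$ is divisorial or of fibre type. The morphisms $\psi$ and $\psi'$ share the common target $\bar X$, so the whole diagram commutes; since one contraction on each end is $K$-negative while the middle map is a flop, this is a Sarkisov link in the sense of \cite{BL}, \S 2.1 (of type I, II or III according to whether $p$ and $p'$ are of fibre or divisorial type). \textbf{The main obstacle} is the existence and the good behaviour of the flop — crepancy, preservation of $\mathbb{Q}$-factoriality and of the big-and-nef anticanonical class — together with, for the general statement, the termination needed if an outer contraction happens to be small (in which case one flips and iterates the two-ray game finitely many times, using termination of flips for threefolds); everything else reduces to the Cone Theorem and Mori's list of extremal contractions.
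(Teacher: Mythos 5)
Your argument is correct and is exactly the standard two-ray game that the paper itself does not reproduce: the proposition is stated as a recalled fact with a pointer to \cite[\S 2.1]{BL}, where this same reasoning (rational polyhedrality of $\overline{\NE}(X)$ for a weak Fano, the anticanonical morphism as the $K$-trivial contraction, existence of the flop, and Mori's classification excluding small $K$-negative contractions on smooth threefolds) is recorded. One small tidying remark: you do not need to restrict to the ``situations relevant to this paper'' to get smoothness of $X'$, since by Koll\'ar's theorem the flop of a smooth threefold is again smooth, so Mori's classification applies on both sides and your final hedge about iterating flips is superfluous here.
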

In the previous diagram $X \dashrightarrow X'$ is an isomorphism or a flop, depending if $-K_X$ is ample or not.  
In our situation, we know one of the contraction, namely $X \to Y$ which is the blow-up of a smooth curve $C$. 
On the other hand there are several possibilities for the contraction $X' \to Y'$. 
It can be divisorial, and in this case $Y'$ is again a Fano threefold with Picard number~$1$, and possibly with a terminal singularity.
The contraction can also be a fibration, either a conic bundle, or a fibration in Del Pezzo surfaces.
Finally observe that if the anticanonical morphism on $X$ is divisorial, then there is no such Sarkisov link.

We now describe the Sarkisov links associated with the curves listed in Theorem~\ref{Thm:WeakFano} (see Table \ref{tab:bigtable}). 
We give elementary arguments whenever possible, but for the most delicate cases we have to rely on previous classification results.

We saw in Proposition~\ref{Prop:Fano} that a line (case $(0,1)$) gives a Fano threefold~$X$ with a conic bundle structure, and a plane cubic (case $(1,3)$) gives a Fano threefold with a Del Pezzo fibration of degree 3.
These correspond respectively to No 11 and 5 in the Table \cite[\S 12.3]{IP}.

Now consider the case $(0,2)$, that is, $C$ is a smooth conic.
Any curve $\Gamma$ of degree $n$ that is $n$-secant to $C$ must be contained in the base locus of the pencil of hyperplane sections containing $C$, hence $\Gamma$ must be the unique 2-secant line to $C$.
Hence the anticanonical morphism is small, contracting only the transform of this 2-secant line.
After flopping this curve we obtain a weak-Fano threefold $X'$ that admits a Del Pezzo fibration of degree 4, in accordance with \cite[\S 7.4]{JPR2}.

In the case $(0,3)$, the curve $C$ is contained in a unique hyperplane $H \subset \p^4$.
Let $\Gamma$ be a curve of degree $n$ that is $n$-secant to $C$, then on $X$ its strict transform satisfies $-K_X \cdot \tilde \Gamma = 0$. 
We can find a pencil of members of $\lvert -K_X \rvert$ containing $\tilde \Gamma$, which correspond on $H$ to a pencil of quadric surfaces containing $C$: This shows that $\Gamma$ must be the residual line of the pencil.
In particular the only curves contracted by the anticanonical morphism are the transforms of the six 2-secant lines given by Lemma~\ref{Lemm:HRS4.2}.  
After flopping these curves, we obtain a weak-Fano threefold $X'$, which according to \cite[Table 6 (4)]{CM} admits a divisorial extremal contraction to a terminal Fano threefold.

In the case of a curve $C$ of type $(g,d) = (1,4)$, contained in a hyperplane $H$, there exists a pencil of smooth quadric surfaces in $H$ that contain $C$. 
Intersecting with the cubic threefold~$Y$ we obtain a pencil of residual conics, which are 4-secant to $C$.
The transforms of these conics on $X$ are trivial against the canonical divisor.
In conclusion the anticanonical map after blow-up is divisorial, contracting the cubic surface $H \cap Y$ on a curve: see \cite[Table A.4, no 6]{JPR}.


The case $(4,6)$ always corresponds to the complete intersection of a hyperplane and a quadric, by Lemma~\ref{Lemm:hypersurfaces}. 
As a consequence any curve of degree $n$ in the cubic surface containing $C$ is $2n$-secant to $C$, and the anticanonical map after blow-up is divisorial \cite[Table A.4, no 25]{JPR}. 

The case $(g,d) = (1,5)$ is given as an open case in \cite[Proposition 6.5, no 8]{JPR2}, with a hypothetical link to a Del Pezzo fibration of degree 5.
However it was proved in \cite[\S III.1 p.~858]{Isk80} that the blow-up of a smooth normal elliptic quintic always yields a link to a Fano 3-fold $V_{14}$ of genus 8.

Finally, cases $(1,4), (0,5)$ and $(2,6)$ do not appear in \cite{JPR2}, so we conclude that after blowing-up $C$ and performing a flop, we obtain a weak-Fano threefold $X'$ with a divisorial extremal contraction.
Consulting the tables in \cite{CM} we see that this contraction must be a contraction to a smooth point in $V_{14}$ in the case $(1,4)$ (this construction was already noticed in \cite[\S(2.8)]{T}), and a contraction to a curve of the same type in a cubic threefold in case $(0,5)$ and $(2,6)$.
It turns out that the cubic threefold we obtain is isomorphic to the one we started with. 
In fact we shall see in Proposition~\ref{Prop:TauOnBlowUp} that in these two cases the Sarkisov link can be seen as an involution of~$Y$.
\section{Examples of birational selfmaps}\label{Sec:Examples}

In this section we produce special examples of birational selfmaps, either on a smooth cubic threefold, or on a blow-up of such a threefold. 
We are in particular interested in producing maps contracting arbitrary ruled surfaces, or pseudo-automorphisms with dynamical degree greater than 1 on a threefold with low Picard number.
We start by recalling the basic notions we shall need.

\subsection{Basics}\label{Sec:BasicOfLastSec}
We recall some basic notions on the genus of a birational map, dynamical degrees, involutions and the non-existence of a $\mathbb{G}_a$-action on a non-rational unirational threefold.

\subsubsection{Genus of a birational map on a threefold}
We recall the notion of genus of a birational map between threefold, as introduced by Frumkin \cite{Fru};  
see also \cite{LamyGenus}.
If $X$ is a smooth threefold, every surface contracted by an element $\varphi\in \Bir(X)$ is birational to $C\times \p^1$, for some smooth projective curve $C$. The \emph{genus} of $\varphi$ is by definition the highest possible genus of $C$ obtained, when considering all surfaces contracted by $\varphi$.
By convention we declare the genus to be equal to $-\infty$ when no surface is contracted, that is, in the case of a pseudo-automorphism.

For each $g$, the set $\Bir(X)_{\le g}$ of all birational maps of genus $\le g$ is a subgroup of $\Bir(X)$.

It is easy to obtain elements of $\Bir(\p^3)$ of any genus: in fact it is sufficient to consider Jonqui\`eres elements.
This shows in particular that $\Bir(\p^3)$ is not generated by automorphisms and finitely many other elements (see \cite{Pan}). 
We will see that such a phenomenon also holds for a smooth cubic threefold.

\subsubsection{Dynamical degree}

Let $f\colon X \dashrightarrow X$ be a dominant rational map on a projective variety of dimension $n$, and let $L$ be an ample divisor on $X$.
We define the degree of $f$ by $\deg f = (f^*L \cdot L^{n-1})/(L^n)$, and the (first) dynamical degree of $f$ by
$$\lambda_1(f) = \lim_{m \to \infty} \deg(f^m)^{1/m}.$$ 
The dynamical degree does not depend on the choice of $L$ and is invariant by conjugation (see \cite[Corollary 7]{DS}): if $\phi\colon Y \dashrightarrow X$ is a birational map, then 
$$\lambda_1(f) = \lambda_1(\phi^{-1} \circ f \circ \phi).$$
If the action of $f$ on $\Pic(X)$ satisfies $(f^*)^n = (f^n)^*$ one says that $f$ is algebraically stable, and in this case the dynamical degree $\lambda_1(f)$ is equal to spectral radius of $f^*$.
This is the case in particular if $f$ is an automorphism, or a pseudo-automorphism.
When $f$ is birational we often prefer to use the action by pushforward $f_* = (f^{-1})^*$.
In general $\lambda_1(f) \neq \lambda_1(f^{-1})$, but this is the case if $f^{-1}$ is conjugate to $f$, which is true in particular when $f$ is the composition of two involutions. 

\subsubsection{Involutions}

Our examples will be obtained as family of involutions, or as composition of involutions.
We recall here a few well-known constructions for future reference.

\begin{enumerate}[wide]
\item \label{inv P1 exchanges} If $a,b,c \in \p^1$ are three distinct points, there exists a unique involution of $\p^1$ that exchanges $a$ and $b$ and fixes $c$.
Indeed up to the action of $\PGL_2$ we can assume $a = 0$, $b = \infty$ and $c = 1$, and the involution is then $z \mapsto \frac1z$.
\item \label{inv P1 fixes} Similarly given $a,b \in \p^1$ two distinct points, there is a unique involution that fixes $a$ and $b$.
If we assume $a = 0$ and $b = \infty$, the involution is $z\mapsto -z$.
\item \label{inv S exchanges} Now if $S \subset \p^3$ is a smooth cubic surface, and $c \in S$ is a point, we define the Geiser involution centred in $c$ as follows.
Given a general line $L$ through $c$, we have $L \cap S = \{a,b,c\}$ with $a,b,c$ distinct, and the Geiser involution restricted to $L$ is by definition the unique involution of $L$ fixing $c$ and exchanging $a$ and $b$.  
This gives a birational involution of $\p^3$, which restricts to the classical Geiser involution on~$S$: the blow-up $X\to S$ of $c$ is a del Pezzo surface of degree $2$, and the lift of the involution is the involution associated to the double covering $X\to \p^2$ given by $\lvert -K_X\rvert$. In particular, the exceptional divisor is exchanged with the strict transform of the hyperplane section tangent at $c$.
\item \label{inv S fixes} In the previous setting, one can also define an involution of $\p^3$ that fixes pointwise the surface $S$, by defining the restriction on $L$ to be the involution fixing $a$ and $b$.
\end{enumerate}

The constructions in~\ref{inv S exchanges} and~\ref{inv S fixes} can be generalised for a cubic hypersurface in $\p^n$ for an arbitrary $n \ge 2$.
In particular the construction~\ref{inv S fixes} was the building block for the examples obtained in \cite{blanc:2013}.

\subsubsection{$\mathbb{G}_a$-action}

Here, as a side remark, we give a proof of the following folklore result, which we mentioned in the introduction, and which does not seem to be available in the literature. 
We learnt the argument from D.~Daigle.

\begin{prop}\label{Prop:Daigle}
Let $Y$ be a threefold, which is unirational but not rational.
Then any rational $\mathbb{G}_a$-action on $Y$ is trivial. 
\end{prop}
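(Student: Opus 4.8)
The plan is to argue by contradiction: suppose $\mathbb{G}_a$ acts non-trivially and rationally on $Y$. A rational action of $\mathbb{G}_a$ on $Y$ induces, on a suitable $\mathbb{G}_a$-invariant open dense subset, a genuine algebraic action; by shrinking we may assume $Y$ carries a non-trivial regular $\mathbb{G}_a$-action. The first step is to extract a quotient: a non-trivial $\mathbb{G}_a$-action on a threefold has generic orbits of dimension one, so there is a rational quotient $Y \dashrightarrow B$ onto a surface $B$, whose general fibre is a $\mathbb{G}_a$-orbit, hence isomorphic to $\mathbb{A}^1$. Thus $Y$ is birational to a variety fibred over $B$ with general fibre $\mathbb{A}^1$.

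Next I would use the structure of $\mathbb{A}^1$-fibrations. Over the function field $K = \mathbb{C}(B)$, the generic fibre is a form of $\mathbb{A}^1_K$; but $\mathbb{A}^1$ has no non-trivial forms over a field of characteristic zero (any $K$-variety that becomes $\mathbb{A}^1$ after base change to $\bar K$ is already $\mathbb{A}^1_K$, because $\mathbb{G}_a$ acting on it gives a $K$-point, or alternatively because $\operatorname{Pic}$ and units behave correctly). Hence the generic fibre is $\mathbb{A}^1_K$, so $Y$ is birational to $B \times \mathbb{A}^1$. Therefore $Y$ is rational if and only if $B$ is rational. Since $Y$ is assumed unirational, $B$ is unirational, hence $B$ is a unirational surface; by Castelnuovo's theorem (unirational surfaces over $\mathbb{C}$ are rational) $B$ is rational. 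Consequently $Y$ is birational to $B \times \mathbb{A}^1 \cong \mathbb{P}^2 \times \mathbb{A}^1$ birationally, which is rational, contradicting the non-rationality of $Y$.

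The one delicate point, and the step I expect to be the main obstacle to state cleanly, is passing from a \emph{rational} $\mathbb{G}_a$-action to an honest fibration in affine lines and justifying that the generic fibre really is $\mathbb{A}^1$ rather than some potentially wilder curve with a $\mathbb{G}_a$-action (e.g.\ one must exclude that the orbit closure is a singular or non-normal rational curve). The clean way is: the general orbit is a $\mathbb{G}_a$-homogeneous curve, and a curve admitting a non-trivial $\mathbb{G}_a$-action with a dense orbit is isomorphic to $\mathbb{A}^1$ (the orbit is $\mathbb{A}^1$ and the action extends to its smooth completion $\mathbb{P}^1$ fixing the point at infinity, forcing the orbit to be all of $\mathbb{A}^1$; a curve with dense $\mathbb{A}^1$ is rational and its normalisation is $\mathbb{A}^1$, and one checks the map is an isomorphism onto a smooth locus). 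Packaging this gives the birational splitting $Y \sim B \times \mathbb{A}^1$, after which the Castelnuovo argument closes the proof.

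Alternatively, and perhaps more economically, one invokes the theory of locally nilpotent derivations at the level of function fields: a non-trivial rational $\mathbb{G}_a$-action corresponds to a non-trivial locally nilpotent derivation $D$ of $\mathbb{C}(Y)$, and a standard lemma (the "slice theorem" over the field of constants, valid in characteristic zero) produces $t \in \mathbb{C}(Y)$ with $D(t)=1$, whence $\mathbb{C}(Y) = \mathbb{C}(Y)^D(t)$ is purely transcendental of degree one over the subfield of invariants $\mathbb{C}(Y)^D$, which has transcendence degree $2$ over $\mathbb{C}$. Unirationality of $Y$ forces $\mathbb{C}(Y)^D$ to be the function field of a unirational surface, hence rational by Castelnuovo, and therefore $\mathbb{C}(Y)$ is rational — the desired contradiction. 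I would likely present this second version, as it is shorter, with a remark that it is the function-field translation of the geometric picture above.
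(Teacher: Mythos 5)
Your proof is correct and follows essentially the same route as the paper: regularize the rational action (Weil), use Rosenlicht's theorem (equivalently, the slice construction for the associated derivation) to split off an $\mathbb{A}^1$-factor birationally over the surface of invariants, observe that this surface is unirational hence rational by Castelnuovo, and conclude that $Y$ would be rational. One small caveat on your second formulation: a locally nilpotent derivation of the \emph{field} $\mathbb{C}(Y)$ is necessarily zero in characteristic zero (apply it repeatedly to $1/t$ where $D(t)=1$), so $D$ should be described as a derivation of $\mathbb{C}(Y)$ that restricts to a locally nilpotent derivation of a suitable affine model; with that rewording the function-field version is also fine.
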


\begin{proof} 
Assume the existence of a non-trivial rational $\mathbb{G}_a$ action on $Y$. Replacing $Y$ by another projective smooth model, we can assume that the action is regular. 
Then there exists an open set $U \subset Y$ where the action is a translation, which corresponds to the existence of an equivariant isomorphism $U\to \mathbb{G}_a\times V$, where the action on $V$ is trivial and the action on $\mathbb{G}_a$ is the translation (follows from the work of Rosenlicht \cite{Ros}).
Since $Y$ is unirational, the variety $V$ is a unirational surface $S$, which is thus rational. This implies that $U$ is rational: contradiction.
\end{proof}
The above result shows that there is no rational $\mathbb{G}_a$-action on a smooth cubic threefold, using the non-rationality result of Clemens and Griffiths \cite{CG}.

\subsection{Birational selfmaps of a cubic threefold with arbitrary genus}
\label{Sec:Genus}
In \cite[Question 11]{LamyGenus} the question is asked whether there exists a birational map on a smooth cubic threefold with genus $\ge 1$.
The existence of Sarkisov links blowing-up a curve of type $(2,6)$ (see \S\ref{Sec:links}) already shows that the answer is affirmative. 
In this section we give two other constructions: a very simple one that produces examples of genus~$1$, and then a more elaborate one that yields maps of arbitrary genus.

Our first construction is based on the fibration associated with an elliptic curve on a smooth cubic threefold.

\begin{prop}
Let $Y\subset \p^4$ be a smooth cubic threefold, let $C\subset Y$ be a smooth plane cubic curve, and let $\pi\colon Y\dasharrow \p^1$ the projection from $C$ $($or more precisely from the plane containing it$)$.

We denote by $\Bir(Y/\pi)$ the subgroup of $\Bir(Y)$ of elements $\varphi$ such that $\pi\varphi=\pi$. Then, there exist elements of $\Bir(Y/\pi)$ having genus~$1$ and dynamical degree~$>1$.
\end{prop}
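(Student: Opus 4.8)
The plan is to exploit the fibration $\pi\colon Y\dashrightarrow\p^1$ whose general fibre is a smooth cubic surface, and on each such fibre to use a family of Geiser-type involutions varying with a parameter, so that the composition of two such families is a fibrewise automorphism of infinite order. Concretely, the generic fibre $S_\eta$ of $\pi$ is a smooth cubic surface over the function field $K=\C(\p^1)=\C(t)$, and the plane cubic $C$ lies in $S_\eta$ as a hyperplane section (indeed $C$ is cut out on $Y$ by the plane $P$ containing it together with the pencil of hyperplanes through $P$, so $C=P\cap Y$ and $C$ is a $-K$-curve on $S_\eta$). Blowing up a general point $c\in S_\eta(K)$, or more usefully a general point lying on $C$, produces a del Pezzo surface of degree $2$, and the associated Geiser involution $\gamma_c$ (as recalled in construction~\ref{inv S exchanges} of \S\ref{Sec:BasicOfLastSec}) is an automorphism of the function field of $S_\eta$ fixing $K$; as $c$ varies in a one–parameter family of $K$-points this glues to an element of $\Bir(Y/\pi)$.

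First I would make precise the choice of the one–parameter family. Fix two general sections $\sigma_1,\sigma_2$ of the fibration $X\to\p^1$ (where $X\to Y$ is a suitable resolution), i.e.\ two curves in $Y$ mapping isomorphically to $\p^1$ under $\pi$, chosen so that $\sigma_i(t)$ is a general point of the fibre $S_t$ for generic $t$; one may take $\sigma_i$ to be general $1$-secant curves of low degree, or simply general hyperplane sections of $Y$ restricted appropriately. Let $\gamma_1,\gamma_2\in\Bir(Y/\pi)$ be the fibrewise Geiser involutions centred along $\sigma_1$, $\sigma_2$ respectively. Then $\varphi=\gamma_2\circ\gamma_1\in\Bir(Y/\pi)$, and I would compute its effect on the generic fibre: over $K$ the generic fibre $S_\eta$ is a cubic surface of Picard rank at least $1$ with two distinguished points, and the composition of the two Geiser involutions acts on $\Pic(\widetilde{S_\eta})$ (the blow-up at the two points, a del Pezzo surface of degree $1$ if the points are general, or on an appropriate weak del Pezzo) with spectral radius $>1$. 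This is the classical fact that the composition of two Geiser/Bertini involutions on a del Pezzo surface of low degree is of infinite order with positive entropy; I would cite or reproduce the elementary Coxeter-group computation in the relevant $E_n$ lattice. Since $\varphi$ preserves the generic fibre, its dynamical degree equals the dynamical degree of the induced automorphism of the $K$-surface $S_\eta$, which is the spectral radius of the corresponding lattice isometry, hence $>1$.

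For the genus: $\varphi$ is fibrewise, and the surface contracted by a single Geiser involution $\gamma_i$ on the fibre $S_t$ is the hyperplane section of $S_t$ tangent at $\sigma_i(t)$, which sweeps out a surface in $Y$ fibred over $\p^1$ with general fibre a plane cubic curve — hence a surface birational to $C\times\p^1$ with $C$ an elliptic curve (and one checks the tangent sections are generically smooth, so genus exactly $1$). Thus $\gamma_i$, and therefore the composition $\varphi$ (whose contracted surfaces are among the transforms of those of $\gamma_1$ and $\gamma_2$), has genus equal to $1$: no contracted surface can have higher genus, since every curve contracted inside a fibre $S_t$ is rational, so every contracted surface dominates $\p^1$ with rational general fibre and is therefore ruled over a base of genus $0$ — giving genus $1$ only through the $\p^1$-base of the fibration, not more. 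Actually the cleanest argument is: a contracted surface $D$ maps to $\p^1$, its general fibre $D\cap S_t$ is a curve contracted by $\gamma_i|_{S_t}$ hence rational, so $D$ is birational to $B\times\p^1$ with $B\subset\p^1$, forcing genus $\le$ genus of the generic fibre of $D\to\p^1$; but $D$ itself, being the tangent-section surface, is birational to $C\times\p^1$, giving genus exactly $1$.

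The main obstacle I anticipate is verifying that $\varphi$ genuinely has dynamical degree $>1$ and not merely that the naive lattice isometry does: one must ensure that $\varphi$ (or a model of it) is algebraically stable, so that $\lambda_1(\varphi)$ really is the spectral radius of $\varphi_*$ on $\Pic$, and that no accidental degeneration (special position of the two sections, or the fibration having reducible/singular fibres interfering) collapses the dynamics. This is handled by working relatively over $K$: over the \emph{generic} fibre everything is as generic as possible, the two chosen $K$-points are in general position by construction, the del Pezzo surface obtained has the expected $E_n$-lattice, the composition of the two involutions is algebraically stable on a suitable relatively minimal model by a standard argument (the two involutions are "in general position" so their indeterminacy loci do not feed into each other), and the genericity transfers back to $Y$ since a non-empty open condition on $\C(t)$-points is non-empty. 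Invariance of the dynamical degree under the birational identification $Y\sim$ (total space of $S_\eta/K$) then yields $\lambda_1(\varphi)=\lambda_1(\varphi|_{S_\eta})>1$.
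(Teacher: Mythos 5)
Your overall strategy (fibrewise Geiser involutions centred along sections of $\pi$, composed to get infinite order) is the same as the paper's, but the genus computation contains a genuine error. You claim the genus-$1$ contracted surface is the union over $t\in\p^1$ of the hyperplane sections of $S_t$ tangent at $\sigma_i(t)$. But the tangent hyperplane section of a smooth cubic surface at a point $c$ is by construction singular at $c$ — generically a nodal plane cubic, hence a \emph{rational} curve — so the surface it sweeps out is fibred over $\p^1$ with rational fibres and is therefore rational, not birational to (elliptic curve)$\times\p^1$. Indeed your own ``cleanest argument'' correctly observes that every curve contracted inside a fibre is rational and then contradicts itself by asserting the resulting surface is birational to $C\times\p^1$. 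The actual genus-$1$ surface is a different one, which you have missed: since the plane cubic $C$ is the base curve of the pencil, it lies in \emph{every} fibre $S_t$, and the Geiser involution of $S_t$ centred at $\sigma_i(t)\notin C$ exchanges $C$ with another curve $C_t\subset S_t$ isomorphic to $C$. The union $V=\bigcup_t C_t$ is birational to $C\times\p^1$ (via $\pi$ and the involution itself) and is contracted by the fibrewise involution onto $C$; this is what gives genus $1$. For the upper bound one must also handle contracted surfaces lying entirely in special fibres (which do not dominate $\p^1$, so your argument does not see them); the paper checks these are rational or cones over elliptic curves.

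A secondary, fixable gap is the dynamical degree. You compose two Geiser involutions and invoke ``the classical fact'' plus an assertion that the indeterminacy loci ``do not feed into each other''. Each single Geiser involution acts on $\mathrm{Span}(-K,E_i)$ with eigenvalues $\pm1$, so degree growth for the composition depends entirely on algebraic stability, i.e.\ on the orbit of each centre under the group generated avoiding the other centres — an infinite list of conditions that you do not verify. The paper sidesteps constructing a stable model by composing \emph{three} involutions and running an explicit induction on degrees and multiplicities (showing $\deg R_{k+1}\ge\tfrac32\deg R_k$ under a finite, checkable genericity hypothesis on the points $p_i$ and their images under at most two of the involutions). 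Your route can be made to work, but as written the key stability step is asserted rather than proved. Finally, your parenthetical suggestion to centre the involution at a point ``lying on $C$'' should be dropped: the construction requires the centre disjoint from $C$, both because $C$ is the indeterminacy locus of $\pi$ and because the exchange $C\leftrightarrow C_t$ described above needs $C$ to avoid the centre.
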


\begin{proof}
We associate an element $\varphi_L\in \Bir(Y/\pi)$ to any line $L\subset Y$ disjoint from $C$, by considering a one-parameter family of Geiser involutions. 
Let $t\in \p^1$ be a general point. 
The corresponding fibre $X_t=\pi^{-1}(t)$ is a smooth cubic surface with a marked point $p_t=L\cap X_t$.
We define  $\varphi_L$ as the birational map whose restriction to $X_t$ is the Geiser involution associated with $p_t$. Note that the Geiser involution on $X_t$ exchanges the curve $C$ with another curve $C_t\subset X_t$, which is birational to $C$. The union of all curves $C_t$ covers a surface $V\subset Y$ that is birational to $C\times \p^1$, and which is contracted by $\varphi_L$ onto $C$. The Geiser involution of $X_t$ contracts the curve $\Gamma_t$ which is the hyperplane section tangent at $c$. The union of these curves covers a surface that is rational.
Since all other surfaces contracted by $\varphi_L$ are contained in special fibres, these are rational or equal to a cone over an elliptic curve, so we obtain that the genus of $\varphi_L$ is $1$.

Choosing general distinct lines $L_1,L_2,L_3$ on $Y$, we obtain involutions $\sigma_1,\sigma_2,\sigma_3\in \Bir(X/\pi)$. We claim that $\sigma_3\sigma_2\sigma_1$ has dynamical degree $>1$ and genus $1$. To show this, we take a general fibre $X$ and consider the restrictions $\hat{\sigma}_1,\hat{\sigma}_2,\hat{\sigma}_3\in \Bir(X)$ of $\sigma_1,\sigma_2,\sigma_3$. 
The lift of $\hat{\sigma}_i$ to the blow-up $Z_i\to X$ of the point $p_i=L_i\cap X$ is an automorphism, which sends the exceptional divisor $E_i$ onto $-K_X-2E_i$, where $K_X$ denotes the pull-back of the anti-canonical divisor. Since the map is of order $2$ and preserves the anti-canonical divisor $-K_X-2E_i$, the action relative to the basis $(-K_X,E_i)$ is then
$$\left(\begin{array}{rr} 2 & 1\\ -3 & -2\end{array}\right).$$
Hence, every element of $\langle\hat{\sigma}_1,\hat{\sigma}_2,\hat{\sigma}_3\rangle$ sends $-K_X$ onto a linear system equivalent to $-dK_X$ for some integer $d$, which corresponds to the degree of the map, according to the ample divisor $-K_X$.

To simplify the notation, we define $p_i=p_{i-3}$, $\hat{\sigma}_i=\hat{\sigma}_{i-3}$ for $i\ge 4$. The lines $L_1,L_2,L_3$ being general, we can assume that $p_{i+3}=p_{i}$ is not equal to $p_{i+2}$, $\hat{\sigma}_{i+2}(p_{i+1}),\hat{\sigma}_{i+2}\hat{\sigma}_{i+1}(p_{i})$ for $i=1,2,3$, and then for each $i\ge 1$.

Writing $\rho=\frac{3}{2}$, we prove then, by induction on $k$, that 
$$R_k=\hat{\sigma}_{k}\hat{\sigma}_{k-1}\dots \hat{\sigma}_{1}(-K_X)$$
has degree $d_k\ge \rho^k$ and multiplicity at most $\rho d_k$ at $p_k$. Moreover, we also show that $R_k$ has multiplicity at most $d_k,\frac{1}{\rho} d_k$ at $\hat{\sigma}_k(p_{k-1}),\hat{\sigma}_k\hat{\sigma}_{k-1}(p_{k-2})$ (if $k\ge 2$, respectively $k\ge 3$) and at most multiplicity $\frac{1}{\rho^2}d_k$ at all other points.

This result is true for $k=1$, since $R_1$ has degree $2$, multiplicity $3$ at $p_1$ and no other base-point. We then use the matrix above to compute $R_{k+1}=\hat{\sigma}_{a_{k+1}}(R_k)$, and obtain that $R_{k+1}$ has degree $d_{k+1}=2d_k-m$ and multiplicity $3d_k-2m\le3d_k-\frac{3}{2} m= \rho d_{k+1}$ at $p_{k+1}$, where $m$ is the multiplicity of $R_k$ at $p_{k+1}$. 

By hypothesis, $p_{k+1}$ is not equal to $p_k$, $\hat{\sigma}_k(p_{k-1}),\hat{\sigma}_k\hat{\sigma}_{k-1}(p_{k-2})$, which implies that $m\le \frac{1}{\rho^2}d_k$ and thus that
$d_{k+1}=2d_k-m\ge (2-\frac{1}{\rho^2})d_k\ge \rho d_k$.

Moreover, the multiplicity of $R_{k+1}=\hat{\sigma}_{k+1}(R_k)$ at $\hat{\sigma}_{k+1}(p_k)$ is the multiplicity of $R_k$ at $p_k$, which is at most $\rho d_k\le \frac{1}{\rho}d_{k+1}$. Similarly, the multiplicity of $R_{k+1}$ at $\hat{\sigma}_{k+1}\hat{\sigma}_k(p_{k-1})$ is at most $\frac{1}{\rho^2} d_{k_1}$ and all other base-points have multiplicity at most $\frac{1}{\rho^3} d_{k_1}$.

This gives the claim, which implies that $(\hat{\sigma}_3\hat{\sigma}_2\hat{\sigma}_1)^i$ has degree at least $(\frac{3}{2})^{3i}$ and implies that $\hat{\sigma}_3\hat{\sigma}_2\hat{\sigma}_1$, and thus ${\sigma}_3{\sigma}_2{\sigma}_1$, has dynamical degree $>1$.
\end{proof}

\begin{rem}
\begin{enumerate}[wide]
\item Taking other smooth cubic curves in $Y$, we can obtain all types of elliptic curves, and compose the maps obtained to obtain birational maps that contract arbitrary many surfaces that are not pairwise birational.

\item It seems plausible that we could obtain maps of higher genus by generalising the above construction, using a family of Bertini involutions associated with an hyperelliptic curve. 
However, finding the hyperelliptic curves does not seem to be easy.
Besides, we have another construction which provides all possible curves instead of only hyperelliptic ones (see Proposition~\ref{Prop:Eachsurface} below).
\end{enumerate}
\end{rem}

Now we construct a class of birational involutions on $Y$ with arbitrary genus, and in fact contracting any given class of  ruled surface. This will be done in the group preserving the fibration associated to the projection from a line of~$Y$, which yields a conic bundle structure on the blow-up $\hat{Y}\to Y$ of the line. 

The following two results on conic bundles will provide the class of involutions.

\begin{lemm}\label{Lemm:ConicBundleInvolution}
Let $\pi\colon Q\to B$ be a conic bundle over an irreducible algebraic variety $B$, given by the restriction of a $\p^2$-bundle $\hat{\pi}\colon P\to B$.

Let $s\colon B\to P$ be a section, whose image is not contained in $Q$. We define $\iota\in \Bir(Q/\pi)$ to be the birational involution whose restriction to a general fibre $\pi^{-1}(p)$ is the involution induced by the projection from the point $s(p)$: it is the involution of the plane $\hat{\pi}^{-1}(p)$ which preserves the conic $\pi^{-1}(p)$ and fixes $s(p)$.

If $\Gamma\subset B$ in an irreducible hypersurface which is not contained in the discriminant locus of $\pi$ and such that $s(\Gamma)\subset Q$, the hypersurface $V=\pi^{-1}(\Gamma)$ of $Q$ is contracted onto the codimension $2$ subset $s(\Gamma)$.
\end{lemm}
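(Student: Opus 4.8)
The plan is to analyse the birational involution $\iota$ fibre by fibre over $B$ and then track what happens along the special hypersurface $\Gamma$. First I would set up coordinates: since $\hat\pi\colon P\to B$ is a $\p^2$-bundle, over a suitable open subset $B_0\subseteq B$ we may trivialise it as $B_0\times\p^2$, and then a conic bundle $Q$ inside it is given fibrewise by a conic $\mathcal C_p\subset\p^2$, smooth for $p$ outside the discriminant locus. Over $B_0$ the section $s$ becomes a morphism $B_0\to\p^2$, and the fibrewise involution $\iota_p$ of $\p^2$ is the unique involution preserving $\mathcal C_p$ and fixing $s(p)$, which exists and is uniquely determined for all $p$ such that $s(p)\notin\mathcal C_p$ and $\mathcal C_p$ is smooth (this is exactly the Geiser-type construction recalled in item~\ref{inv S exchanges} and~\ref{inv P1 fixes} of \S\ref{Sec:BasicOfLastSec}, applied to the projection of a smooth conic from an exterior point). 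Concretely, $\iota_p$ is the map sending a point $x\in\mathcal C_p$ to the residual intersection point of the line $\overline{s(p)x}$ with $\mathcal C_p$; this makes $\iota\in\Bir(Q/\pi)$ a well-defined birational involution over the open set where $s(p)\notin\mathcal C_p$.

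Next I would identify the indeterminacy/contracted locus. The fibrewise description shows that $\iota_p$ fails to be defined precisely where $s(p)\in\mathcal C_p$: in that case projection from $s(p)$ collapses the whole conic $\mathcal C_p$ to a single point, namely (on a smooth conic) the point $s(p)$ itself, viewed as the limit of residual points. So on the locus $\{p: s(p)\in\mathcal C_p,\ p\notin\mathrm{Disc}(\pi)\}$ the map $\iota$ contracts the fibre $\pi^{-1}(p)\cong\p^1$ to the point $s(p)\in Q$. By hypothesis $\Gamma$ is an irreducible hypersurface contained in this locus (it is not inside the discriminant, and $s(\Gamma)\subset Q$), so $\iota$ contracts the whole hypersurface $V=\pi^{-1}(\Gamma)$ onto $s(\Gamma)$. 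Since $\Gamma$ has codimension $1$ in $B$ and $s$ is a section (hence a closed embedding onto its image, of the same dimension as $\Gamma$), $s(\Gamma)$ has codimension $2$ in $Q$; and $V$, being a $\p^1$-bundle over $\Gamma$ away from the discriminant, is genuinely a divisor of $Q$ mapped to this codimension~$2$ subset. That is exactly the assertion.

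The main technical point — and the only genuine obstacle — is to make precise that on a smooth conic the projection from a point \emph{lying on the conic} really does contract the conic to that point in the limiting/birational sense, i.e. that the rational map $\iota\colon Q\dashrightarrow Q$ extended from the generic fibre does send the generic point of $V$ to the generic point of $s(\Gamma)$. This is a local computation: writing the conic as $xz=y^2$ and the section through, say, $[0:0:1]$, the involution is $[x:y:z]\mapsto[x:y:z]$ composed with the residual-point formula, and one checks directly that as the parameter degenerates so that $s(p)$ moves onto $\mathcal C_p$, the image of every point of $\mathcal C_p$ tends to $s(p)$. One must also check that $\Gamma$ is not swallowed into the base locus in a worse way — but this is guaranteed by the hypotheses that $\Gamma\not\subset\mathrm{Disc}(\pi)$ (so the generic fibre over $\Gamma$ is a smooth conic, not already degenerate) and $s(\Gamma)\subset Q$ (so the section genuinely meets the fibre). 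Everything else is bookkeeping with $\p^2$-bundles and the dimension count for $s(\Gamma)$, and I would present those steps briefly, referring to the fibrewise involutions recalled in~\S\ref{Sec:BasicOfLastSec}.
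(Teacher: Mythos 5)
Your proposal is correct and follows essentially the same route as the paper: trivialise the $\p^2$-bundle over an open set meeting $\Gamma$, describe the involution fibrewise as the residual-point map on lines through $s(p)$, and observe that when $s(p)$ lies on the conic (which happens identically along $\Gamma$) the map degenerates to the constant map onto $s(p)$, with the codimension count being immediate. The ``local computation'' you defer is exactly what the paper makes explicit via the closed formula $[x:y:z]\mapsto[\alpha R-xf:\beta R-yf:\gamma R-zf]$ with $f=F(s)$, which visibly collapses the whole fibre to $[\alpha:\beta:\gamma]=s(p)$ wherever $f$ vanishes, i.e.\ along $\Gamma$.
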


\begin{proof}
We choose a dense open subset of $B$ which intersects $\Gamma$ and trivialises the  $\p^2$-bundle, and apply a birational map to view $Q$ inside of $\p^2 \times B$, given by $F\in \C(B)[x,y,z]$, homogeneous of degree $2$ in $x,y,z$. The  fibre of $\pi \colon Q\to B$ over a general point of $\Gamma$ (respectively of $B$) is a smooth conic. The section $s$ corresponds to $[\alpha:\beta:\gamma]$, where $\alpha,\beta,\gamma\in \C(B)$.

We denote by $f=F(s)\in \C(B)$ the evaluation of $F$ at $x=\alpha,y=\beta,z=\gamma$ (which looks like an evaluation at $s$ but depends in fact of $(\alpha,\beta,\gamma)\in \C(B)^3$), and by $f_x,f_y,f_z$ the evaluation of the partial derivatives of $F$ at $x=\alpha,y=\beta,z=\gamma$ (same remark). Writing then $R=x f_x+yf_y+zf_z$, we claim that $\iota$ is given by 
$$\iota\colon [x:y:z]\mapsto [\alpha R-x f:\beta R-y f:\gamma R-zf].$$
(Now the map only depends on $s$ and not on the choice of $\alpha,\beta,\gamma$). This claim will imply the result: for a general point of $p\in \Gamma$ the value of $f$ is zero, and the corresponding rational transformation of $\p^2\sim \hat{\pi}^{-1}(p)$ contracts the whole plane onto $s(p)$.

It remains to see that $\iota$ has the desired form. Using the Euler formula $f=\frac{1}{2}(\alpha f_x+\beta f_y+\gamma f_z)$, the above transformation corresponds to the element $$M=\left(\begin{array}{lll}
f_x \alpha-f_y \beta-f_z \gamma&2 \alpha f_y&2\alpha f_z\\
2\beta f_x&-f_x\alpha+f_y\beta-f_z\gamma&2\beta f_z\\
2\gamma f_x&2\gamma f_y&-f_x\alpha-f_y\beta+f_z\gamma\end{array}
\right)$$
of $\PGL_3(\C(B))$, whose square is the identity (or more precisely $(f)^2$ times the identity). It is then a birational involution of $P$. Moreover, $M$ fixes $[\alpha:\beta:\gamma]$ (multiplying the matrices we get $[\alpha f:\beta f:\gamma f]$).  It remains then to see that the above map preserves the equation of $F$. This can be done explicitly by writing $F=ax^2+by^2+cz^2+dxy+exz+fyz$, where $a,b,c,d,e,f\in \C(\Gamma)$.
\end{proof}
\begin{lemm}\label{Lemm:ExtensionSection}
Let $\pi\colon Q\to \p^2$ be a conic bundle, given by the restriction of a $\p^2$-bundle $\hat{\pi}\colon P\to \p^2$. Let $\Gamma\subset \p^2$ be an irreducible curve, not contained in the discriminant of $P$. Then, there is a section $s\colon \p^2\to P$ of $\hat{\pi}$, whose image is not contained in $Q$ but such that $s(\Gamma)\subset Q$.
\end{lemm}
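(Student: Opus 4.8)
The statement asks for a section $s\colon \p^2 \to P$ of the $\p^2$-bundle $\hat\pi$ whose image avoids being contained in $Q$ but meets $Q$ along $\Gamma$, i.e. $s(\Gamma)\subset Q$. I would handle this by a dimension count in the space of sections. Sections of $\hat\pi\colon P\to\p^2$ correspond (after twisting) to global sections of a suitable rank-three vector bundle $\mathcal E$ on $\p^2$ with $P=\p(\mathcal E)$; more concretely, working on an open set trivialising $P$, a section is given by a triple $[\alpha:\beta:\gamma]$ of sections of some $\mathcal O_{\p^2}(m)$ for $m$ large enough that the linear system is very ample (or at least globally generated), and these glue to honest sections of $\hat\pi$. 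The conic bundle $Q\subset P$ is cut out fibrewise by a quadratic form $F$; requiring $s(p)\in Q_p$ for all $p\in\Gamma$ is the single condition $F(s)|_\Gamma = 0$, which is a system of \emph{linear} equations on the coefficients of $(\alpha,\beta,\gamma)$ once we fix the monomial bound, with the number of equations growing like $\deg F \cdot m \cdot \deg\Gamma$ (linearly in $m$) while the number of unknowns grows like $m^2$. So for $m$ large the solution space is nonempty and in fact of large dimension.

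\emph{First step:} reduce to a trivialising open set and set up the linear-algebra picture. Choose $m\gg 0$ so that $\mathcal O_{\p^2}(m)$ is very ample; a section of $\hat\pi$ is then (after clearing denominators by $m$) a triple $[\alpha:\beta:\gamma]$ with $\alpha,\beta,\gamma\in H^0(\p^2,\mathcal O(m))$, with no common zero, modulo scaling. The locus where $F(\alpha,\beta,\gamma)$ vanishes identically on $\Gamma$ is the kernel of the restriction map $H^0(\p^2,\mathcal O(2m+k))\to H^0(\Gamma,\mathcal O_\Gamma(2m+k))$ pulled back through $(\alpha,\beta,\gamma)\mapsto F(\alpha,\beta,\gamma)$, where $k=\deg F$ measured appropriately; this is an intersection of hyperplanes in the affine space $H^0(\mathcal O(m))^{\oplus 3}$, whose number is bounded by $h^0(\Gamma,\mathcal O_\Gamma(2m+k)) = (2m+k)\deg\Gamma + 1 - g(\Gamma)$ (for $m$ large, using Riemann–Roch on $\Gamma$), which is $O(m)$.

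\emph{Second step:} compare dimensions. Since $h^0(\p^2,\mathcal O(m))^{\oplus 3} = 3\binom{m+2}{2}$ grows quadratically in $m$, for $m$ large the space of triples $(\alpha,\beta,\gamma)$ with $F(\alpha,\beta,\gamma)|_\Gamma\equiv 0$ has dimension at least $3\binom{m+2}{2} - \big((2m+k)\deg\Gamma+1\big) > 0$, in fact $\to\infty$. It remains to check two open conditions inside this (large) linear space: that $(\alpha,\beta,\gamma)$ has no common zero on $\p^2$ (so it really defines a section of $\hat\pi$, not a rational one — here one may need to blow up common base points, but for generic choice in a sufficiently large linear system Bertini applies and the base locus is empty), and that the resulting section is \emph{not} entirely contained in $Q$, i.e. $F(\alpha,\beta,\gamma)$ does not vanish on all of $\p^2$. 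The latter is nonempty as long as not every element of our linear space forces $F(\alpha,\beta,\gamma)\equiv 0$ globally; since $\Gamma$ is a proper subvariety of $\p^2$ and $F$ is not identically zero on the generic fibre (the conic bundle is nondegenerate away from its discriminant, and $\Gamma$ is not in the discriminant), a generic choice satisfies this. One concludes that a generic member of the (nonempty, positive-dimensional) linear family gives the desired section $s$.

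\emph{Main obstacle.} The genuine subtlety is making sure the linear conditions "$F(s)$ vanishes on $\Gamma$" are genuinely weaker than "$F(s)$ vanishes on $\p^2$" — equivalently, that we are not accidentally forcing the section into $Q$. This is where the hypotheses that $\Gamma$ is irreducible of \emph{codimension one} (a curve, strictly inside $\p^2$) and that $\Gamma$ is \emph{not contained in the discriminant} of $P$ are used: the restriction of $F$ to a generic fibre over $\Gamma$ is a nondegenerate conic, so there genuinely exist points of that conic, and by the $O(m)$ vs $O(m^2)$ count we can prescribe a section passing through one such point in each fibre over $\Gamma$ while remaining free elsewhere. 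A secondary (routine) point is the passage from "no common zeros" being an open dense condition — handled by choosing $m$ large and invoking Bertini, or alternatively by observing that one may always add to any solution a generic element of the kernel without changing the vanishing on $\Gamma$, and a generic such sum is base-point-free. Once these are in place, the existence of $s$ is immediate.
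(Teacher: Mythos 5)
Your parameter count rests on the claim that the condition $F(s)|_{\Gamma}=0$ is a system of \emph{linear} equations on the coefficients of $(\alpha,\beta,\gamma)$. This is false: $F$ is homogeneous of degree $2$ in the fibre coordinates, so $F(\alpha,\beta,\gamma)$ is a \emph{quadratic} function of the coefficients of $\alpha,\beta,\gamma$, and the pullback of the linear subspace $\ker\bigl(H^0(\p^2,\mathcal O(2m+k))\to H^0(\Gamma,\mathcal O_\Gamma(2m+k))\bigr)$ under the quadratic map $(\alpha,\beta,\gamma)\mapsto F(\alpha,\beta,\gamma)$ is an intersection of quadrics, not of hyperplanes. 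The mere existence of a nonzero solution can be rescued (an intersection of $O(m)$ hypersurfaces in a projective space of dimension $O(m^2)$ is nonempty by the projective dimension theorem), but everything you build on top of linearity collapses: Bertini for the base locus of a \emph{linear} system no longer applies, and your fallback of ``adding to any solution a generic element of the kernel without changing the vanishing on $\Gamma$'' is simply wrong for a quadratic system, since $F(s)|_\Gamma=F(s')|_\Gamma=0$ does not imply $F(s+s')|_\Gamma=0$. Most seriously, you give no argument ruling out that \emph{every} solution of your system satisfies $F(s)\equiv 0$ on all of $\p^2$, i.e.\ that the whole solution set consists of sections landing in $Q$; ``a generic choice satisfies this'' is asserted, not proved. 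That last point could be repaired by additionally imposing $s(p_0)=q_0$ for a fixed $p_0\in\p^2\setminus\Gamma$ and a fixed $q_0\in\hat{\pi}^{-1}(p_0)\setminus Q$ (finitely many further equations, so the count survives), but as written the proof has a genuine gap at each of these three places.

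For comparison, the paper's proof avoids all counting. The restriction $\pi^{-1}(\Gamma)\to\Gamma$ is a conic bundle over a curve, so by Tsen's theorem ($\C(\Gamma)$ is a $C_1$ field) it admits a rational section $s_0\colon\Gamma\to Q$; one then writes $s_0$ in local coordinates as a pair of rational functions on $\Gamma$ and lifts them arbitrarily to rational functions on $\p^2$, obtaining a (rational) section of $\hat{\pi}$ that agrees with $s_0$ over $\Gamma$ and, for a suitable choice of lifts, is not contained in $Q$. Note also that in the paper, as in its application in Lemma~\ref{Lemm:ConicBundleInvolution}, ``section'' effectively means rational section, so the base-point difficulty you try to handle with Bertini does not arise. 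If you want to keep an interpolation-style argument, the clean route is to first produce $s_0$ over $\Gamma$ by Tsen and only then interpolate a triple $(\alpha,\beta,\gamma)$ restricting to it, which \emph{is} a linear problem.
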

\begin{proof}
The preimage by $\pi$ of $\Gamma$ is a surface $\hat S$, and there exists a section $s_0\colon \Gamma \to \hat S$ by Tsen's theorem (see \cite[Corollary 6.6.2 p.~232]{Ko}). It remains to see that we can extend $s_0$ to a section $s\colon \p^2\to P$ of $\hat{\pi}$, whose image is not contained in $Q$. 

Taking local coordinates, we can view $\hat{\pi}$ as the projection $\A^4\to \A^2$ on the first two coordinates and $\Gamma$ as curve in $\A^2$ given by some irreducible polynomial $P\in \C[x,y]$. Then,  $s_0$ is given by two rational functions $\frac{f_1}{g_1},\frac{f_2}{g_2}$, where $f_1,f_2,g_1,g_2\in \C[\Gamma]=\C[x,y]/(P)$ and $g_1,g_2\not=0$. There are then plenty of ways to extend $s_0$, by choosing representatives of the $f_i$ and $g_i$ in $\C[x,y]=\C[\A^2]$.
\end{proof}

\begin{prop}\label{Prop:Eachsurface}
Let $Y\subset \p^3$ be a smooth cubic hypersurface, let $\ell\subset Y$ be a line and let $\Gamma$ be an abstract irreducible curve.

Then, there exists a birational involution $\iota\in \Bir(Y)$ that preserves a general fibre of the projection $Y\dasharrow \p^2$ away from $\ell$, and which contracts a surface birational to $\Gamma\times \p^1$.
\end{prop}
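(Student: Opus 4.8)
The plan is to realise the desired involution inside the conic bundle obtained by blowing up~$\ell$, and then to feed Lemmas~\ref{Lemm:ExtensionSection} and~\ref{Lemm:ConicBundleInvolution} into that picture. Denote by $\eta\colon \hat Y\to Y$ the blow-up of the line $\ell\subset Y$ and by $E\subset\hat Y$ its exceptional divisor. As recalled before Lemma~\ref{Lemm:ConicBundleInvolution}, the projection $\pi_\ell\colon Y\dasharrow \p^2$ away from $\ell$ lifts to a conic bundle $\pi\colon \hat Y\to \p^2$, which moreover is the restriction of the $\p^2$-bundle $\hat{\p}^4\to \p^2$ obtained by blowing up $\ell$ in $\p^4$. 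Since $\hat Y$ and $Y$ are birational and $\pi=\pi_\ell\circ\eta$ as rational maps, it suffices to produce a birational involution $\iota\in\Bir(\hat Y/\pi)$ which contracts a surface $V$ birational to $\Gamma\times\p^1$ and which is not contracted by $\eta$; the conjugate $\eta\iota\eta^{-1}\in\Bir(Y)$ will then be an involution preserving every fibre of $\pi_\ell$ and contracting the surface $\eta(V)$. Let $\Delta\subset\p^2$ be the (fixed) discriminant curve of $\pi$.

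First I would push the abstract curve $\Gamma$ into the base $\p^2$. Since $\C(\Gamma)$ has transcendence degree~$1$ over $\C$, there is a rational map $\Gamma\dasharrow\p^2$ that is birational onto its image; composing if necessary with a general automorphism of $\p^2$ (or choosing another plane model of $\C(\Gamma)$), one arranges that the image $\Gamma_0\subset\p^2$ is an irreducible curve which is not one of the finitely many components of $\Delta$. Lemma~\ref{Lemm:ExtensionSection} then produces a section $s\colon\p^2\to\hat{\p}^4$ of the $\p^2$-bundle whose image is not contained in $\hat Y$ but satisfies $s(\Gamma_0)\subset\hat Y$. Plugging $s$ into Lemma~\ref{Lemm:ConicBundleInvolution} yields a birational involution $\iota\in\Bir(\hat Y/\pi)$ that contracts the hypersurface $V=\pi^{-1}(\Gamma_0)$ onto the codimension-two locus $s(\Gamma_0)$.

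It then remains to check two points. The first is that $V$ is birational to $\Gamma\times\p^1$: the projection $V=\pi^{-1}(\Gamma_0)\to\Gamma_0$ is a conic bundle whose generic fibre is a smooth conic over $\C(\Gamma_0)$ (because $\Gamma_0\not\subset\Delta$), and by Tsen's theorem this conic has a $\C(\Gamma_0)$-rational point, hence is isomorphic to $\p^1_{\C(\Gamma_0)}$; thus $V$ is birational to $\Gamma_0\times\p^1$, and $\Gamma_0$ is birational to $\Gamma$. The second is that $\eta$ does not contract $V$: a general fibre of $\pi$ is a residual conic $\tilde C$, which has degree $2$ and meets $\ell$ in $2$ points, so $E\cdot\tilde C=2$ and $\pi|_E$ is generically $2$-to-$1$ onto $\p^2$, whereas $\pi(V)=\Gamma_0$ is a curve; hence $V\neq E$, and since $\eta$ contracts only $E$, the image $\eta(V)\subset Y$ is a surface birational to $V$, therefore to $\Gamma\times\p^1$. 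This gives the required birational involution $\eta\iota\eta^{-1}\in\Bir(Y)$.

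There is essentially no serious obstacle here once Lemmas~\ref{Lemm:ExtensionSection} and~\ref{Lemm:ConicBundleInvolution} are available; the only steps needing a word of justification are the appeal to Tsen's theorem (to see that $V$ is genuinely ruled over $\Gamma_0$ rather than merely dominating it) and the verification that the contracted surface survives the blow-down~$\eta$, i.e.\ that $V\neq E$.
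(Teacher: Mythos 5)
Your proposal is correct and follows essentially the same route as the paper: blow up $\ell$ to get the conic bundle $\hat Y\to\p^2$, choose an irreducible plane curve birational to $\Gamma$ avoiding the discriminant, and apply Lemmas~\ref{Lemm:ExtensionSection} and~\ref{Lemm:ConicBundleInvolution}. The paper leaves the two final verifications (that $V=\pi^{-1}(\Gamma_0)$ is ruled over $\Gamma_0$ and that it is not the exceptional divisor of the blow-up) implicit, so your extra checks are welcome detail rather than a divergence; note also that the section $s$ already gives a rational point of the generic fibre of $V\to\Gamma_0$, so the appeal to Tsen's theorem at that step is not strictly needed.
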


\begin{proof}
Blowing-up $\ell$, we obtain a conic bundle $\hat{Y}\to \p^2$. 
We then pick an irreducible curve in $\p^2$, which is birational to $\Gamma$ and not contained in the discriminant locus and apply Lemmas~\ref{Lemm:ConicBundleInvolution} and~\ref{Lemm:ExtensionSection}.
\end{proof}

\begin{rem}
As before, taking different lines and composing maps, one obtain maps of dynamical degree $>1$ contracting any possible birational type of ruled surfaces.
\end{rem}

\subsection{Pseudo-automorphisms}\label{Sec:Pseudoauto}

The aim of this section is to prove Proposition~\ref{Prop:mainC}, i.e.~to produce a pseudo-automorphism with dynamical degree greater than $1$ on a smooth threefold $Z$ with Picard number~$3$.
Observe that this is the minimal value of the Picard number: such a pseudo-automorphism induces a linear map on $\Pic(Z)$ with determinant $\pm 1$, one eigenvalue equal to~$1$ (the canonical divisor is preserved) and one eigenvalue of modulus bigger than~$1$ (equal to the dynamical degree).
This answers, at least in dimension~$3$, a question asked in \cite[Question 1.5]{blanc:2013}.\\

The proof follows directly from the three propositions that occupy the rest of this section.
The first step relies on the fact that $(-K_X)^3 = 2$, which is true for a curve of type $(2,6)$ but also for a curve of type $(0,5)$.

\begin{prop}\label{Prop:TauOnBlowUp}
Let $Y\subset \p^4$ be a smooth cubic threefold.
Let $C\subset Y$ be a smooth curve of genus~$g$ and degree~$d$, with $(g,d)\in \{(0,5),(2,6)\}$, which does not admit any $3$-secant line in $Y$. 
Let $\pi\colon X\to Y$ be the blow-up of~$Y$ along $C$, $E=\pi^{-1}(C)$ be the exceptional divisor and $H_X\in \Pic(X)$ be the pull-back of a hyperplane section of~$Y$. 

Then, the linear system $\lvert -K_X \rvert$ yields a surjective morphism $\sigma\colon X\to \p^3$ with general fiber equal to two points. The involution corresponding to the exchange of the two points is a pseudo-automorphism $\tau\colon X\dasharrow X$ whose action on $\Pic(X)$, with respect to the basis $(H_X,E)$, is respectively
$$\begin{pmatrix} 
  13 & 24 \\ -7 & -13
  \end{pmatrix},\begin{pmatrix} 
  11 & 20 \\ -6 & -11
  \end{pmatrix}$$
  for $(g,d)=(0,5)$ and $(g,d)=(2,6)$.
\end{prop}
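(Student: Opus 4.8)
The plan is to exploit the numerical identity $(-K_X)^3 = 22 - 4d + 2g = 2$ (Lemma~\ref{Lemm:K^3}), valid for both $(g,d) = (0,5)$ and $(g,d) = (2,6)$, together with the fact (Corollary~\ref{Coro:WeakFano}, resp.~Proposition~\ref{Prop:smoothgeneral}) that $X$ is weak-Fano with base-point-free anticanonical system when $C$ has no $3$-secant line. First I would invoke Proposition~\ref{Prop:-K irreducible}: $\dim\lvert -K_X\rvert = \frac12 (-K_X)^3 + 2 = 3$, so $\lvert -K_X\rvert$ defines a morphism $\sigma\colon X \to \p^3$; since it is base-point-free and $X$ is weak-Fano (but not Fano, as there are $2$-secant lines), $\sigma$ is the small anticanonical morphism followed by a generically finite map, and a degree count $(-K_X)^3 = 2$ shows $\sigma$ is generically $2$-to-$1$ onto $\p^3$. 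The associated deck transformation $\tau$ is a birational involution of $X$; because the anticanonical morphism is small (it contracts only finitely many flopping curves, the transforms of $2$-secant lines) and $-K_X$ is $\tau$-invariant by construction, $\tau$ is an isomorphism in codimension~$1$, i.e.~a pseudo-automorphism.

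Next I would compute the action $\tau^*$ on $\Pic(X) = \z H_X \oplus \z E$. Since $\tau^*(-K_X) = -K_X$ and $-K_X = 2H_X - E$, the matrix of $\tau^*$ in the basis $(H_X, E)$ has the form $\left(\begin{smallmatrix} a & b \\ c & d\end{smallmatrix}\right)$ with $2a - c = 2$ and $2b - d = -1$; moreover $\tau^2 = \mathrm{id}$ forces the matrix to be an involution, hence trace zero (it is not $\pm\mathrm{Id}$), so $a + d = 0$ and $\det = -1$, giving $ad - bc = -1$. These relations leave a one-parameter family; to pin it down I would determine one more image, namely $\tau^*(E)$ or equivalently $\tau^*(H_X)$, by intersecting with a curve whose class and image under $\tau$ are understood — for instance a fibre of $E \to C$, or a line of $Y$ meeting $C$, whose transform is one of the flopped $2$-secant lines. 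Concretely, $\sigma$ contracts a curve $\Gamma$ iff $-K_X \cdot \tilde\Gamma = 0$, and for such a curve $\tau$ fixes its class; combined with the intersection of $H_X$ and $E$ with a well-chosen effective curve (a conic residual to a secant line, or the strict transform of a hyperplane section on $Y$ through many points of $C$) this gives the remaining linear equation and forces the two stated matrices. One should double-check that the characteristic polynomial $t^2 - (a+d)t + \det = t^2 - 1$... — wait, that would make the dynamical degree~$1$; in fact the relevant action for the dynamical degree is on the flop $X'$ or equivalently one must be careful that $\tau$ is algebraically stable on $X$ (Proposition~\ref{Prop:SarkisovLink} describes the link $X \dashrightarrow X'$), and it is the composed action taking into account the flop that has eigenvalue $49 + 20\sqrt 6$; here we only record $\tau^*$ on $\Pic(X)$, whose eigenvalues are the roots of $t^2 - 2t\cdot(\text{half-trace})\cdots$, and for the matrices given the characteristic polynomial is $t^2 - (a+d)t + (ad-bc) = t^2 + \det'$ — the trace of $\left(\begin{smallmatrix}13 & 24\\-7&-13\end{smallmatrix}\right)$ is~$0$ and its determinant is $-169 + 168 = -1$, consistent with $\tau^2 = \mathrm{id}$. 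The dynamical-degree claim $49 + 20\sqrt6$ is deferred to the next proposition and is not part of this statement.

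\textbf{Main obstacle.} The delicate point is verifying that $\tau$ is a genuine \emph{pseudo-automorphism} rather than merely a birational involution: one must show that the small contraction $X \to Y_0$ (the anticanonical model) is $\tau$-equivariant and that the flop $X \dashrightarrow X'$ reabsorbs any would-be contracted divisor, so that no divisor is contracted. This uses in an essential way that the anticanonical morphism is \emph{small} (the content of Theorem~\ref{Thm:WeakFano}\ref{case:thm2}\ref{case:thmii}, relying on \cite{JPR} to exclude divisorial contractions) — if it were divisorial, $\tau$ would contract a surface and fail to be a pseudo-automorphism. A secondary technical nuisance is computing $\tau^*$ cleanly: rather than chasing intersection numbers on $X$, the tidiest route is probably to pass to the K3 surface $T \in \lvert -K_X\rvert$, on which $\tau$ restricts to an involution, and use that $T \to \p^2$ (the restriction of $\sigma$) is a double cover branched over a sextic, so $\tau|_T$ is the associated Nikulin-type involution; its action on $\Pic(T)$, pulled back from the lattice computation on $T$ and pushed to $X$ via the restriction map $\Pic(X) \hookrightarrow \Pic(T)$, yields the matrix directly. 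I expect the K3-restriction argument to be the most efficient way to both establish pseudo-automorphy (the involution on the K3 is biregular, and extends) and extract the explicit matrices.
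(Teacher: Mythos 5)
Your strategy is the paper's: the first half (base-point-freeness and $\dim\lvert -K_X\rvert=3$ give a morphism $\sigma\colon X\to\p^3$, surjective since $(-K_X)^3=2>0$, generically $2$-to-$1$ by the same degree count, and the deck involution is a pseudo-automorphism because the anticanonical morphism is small, so $\tau$ is biregular off finitely many curves) matches the paper's argument step for step. Your reduction of $\tau^*$ to the normal form $\left(\begin{smallmatrix} -1 & 0\\ a & 1\end{smallmatrix}\right)$ in the basis $(H_X,-K_X)$, and your choice of a fibre $f$ of $E\to C$ as the curve that pins down $a$, are also exactly what the paper does.

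The gap is that you never supply the mechanism that actually determines $a$; as you note yourself, the constraints you list leave a one-parameter family, and asserting that they ``force the two stated matrices'' is not a proof. The missing step is: since $f\cdot(-K_X)=1$, the image $\sigma(f)$ is a \emph{line} of $\p^3$ and $\sigma\vert_f$ is an isomorphism onto it, so $\tau_*f$ is the residual class $(-K_X)^2-f$ in the cone of curves; computing $(-K_X)^2=(12-d)\ell-(22-2d)f$ in the dual basis $(\ell,f)$ (via $K_X^2\cdot E=2+2d-2g$ and hence $K_X^2\cdot H_X=12-d$ from Lemma~\ref{Lemm:K^3}) gives $a=\tau_*(f)\cdot H_X=12-d$, i.e.\ $a=7$ for $(0,5)$ and $a=6$ for $(2,6)$, which yields the two matrices after the change of basis back to $(H_X,E)$. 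Two smaller points: your parenthetical that $\tau$ ``fixes the class'' of the curves contracted by $\sigma$ is not usable (those curves are precisely the indeterminacy locus of $\tau$, and their classes are not preserved), so they cannot serve as the extra data point; and the K3-restriction route you advertise as the most efficient alternative is plausible but unnecessary --- it would still require identifying $H_X\vert_T$, $E\vert_T$ and the branch sextic, i.e.\ essentially the same intersection numbers the direct computation above already uses.
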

\begin{proof}
By Theorem~\ref{Thm:WeakFano}\ref{case:thm2}\ref{case:thmii}, the threefold $X$ is weak-Fano.
By Lemma~\ref{Lemm:K^3} and Proposition~\ref{Prop:-K irreducible}, we have 
$$(-K_X)^3 = 2 \text{ and } \dim \lvert -K_X \rvert = 3.$$

By Proposition~\ref{Prop:smoothgeneral}, the linear system $\Lambda$ of quadric hypersections of~$Y$ through $C$ has no base-point outside $C$ and has a general member which is smooth. Moreover, the linear system $\lvert -K_X \rvert$ is base-point-free.

So the rational map induced by $\lvert -K_X \rvert$ is a morphism $\sigma\colon X\to \p^3$. 
It is surjective, otherwise the image would have at most dimension 2 and we would have $(-K_X)^3 = 0$, contradiction. 
Moreover, since by Theorem~\ref{Thm:WeakFano} the anticanonical morphism is small, there are finitely many curves that intersect $K_X$ trivially, hence finitely many fibres that have positive dimension. 
The number of points in a finite fibre can be computed as the intersection of three elements of $\lvert -K_X \rvert$. 
Since $(-K_X)^3 = 2$, a general fibre consists of $2$ points, and some codimension $1$ subset of $\p^3$ yields fibres with one point (ramification divisor).

The birational involution $\tau\in \Bir(X)$ that exchanges the two points in a general fibre of $\sigma$ is thus an automorphism outside of the finite set of curves having zero intersection with $K_X$, and is then a pseudo-automorphism. 
Let $f\in X$ be a general fibre of the $\p^1$-bundle $\pi|_E\colon E\to C$, and $\ell\in X$ be the pull-back by $\pi$ of a line in $Y$. 
The dual of $\Pic(X)$ is then generated by $\ell,f$, which have intersection $(1,0)$ and $(0,-1)$ with $(H_X,E)$ respectively. 
The pull-back by $\sigma$ of a general line of $\p^3$ is equivalent to $(-K_X)^2=(2H_X-E)^2$.

By Lemma~\ref{Lemm:K^3} we have $(-K_X)^3 =22-4d+2g=2$ and
$K_X^2\cdot E = 2+2d-2g=22-2d$, which yields $$(K_X)^2\cdot H_X=\frac{1}{2}(K_X)^2\cdot (-K_X+E)=12-d,$$
and thus $(-K_X)^2=(12-d)\ell-(22-2d)f$.

Observe that $f\cdot (-K_X)=1$, which implies that $f$ intersects a general fibre of hyperplane into one point, transversally. In particular, $\sigma(f)$ is a line of $\p^3$ and the restriction of $\sigma$ yields an isomorphism $f\to \sigma(f)$. Since $\sigma^{-1}(\sigma(f))$ is equivalent to $(-K_X)^2=(12-d)\ell-(22-2d)f$, we get that $\sigma(f)$ is equivalent to $(12-d)\ell-(23-2d)f$. 

In particular, the action of $\sigma$ on $\Pic(X)$ is then of order $2$, fixing $-K_X$. With respect to the basis $(H_X,-K_X)$, the map is thus of the form
$$\begin{pmatrix} 
  -1 & 0 \\ a & 1
  \end{pmatrix},$$
  for some $a\in \mathbb{Z}$. It remains to compute
  $$a=f\cdot (-H_X+a(-K_X))=f\cdot \sigma(H_X)=\sigma(f)\cdot H_X=12-d$$
  to get the action.
\end{proof}

\begin{prop}\label{Prop:Pairof26}
Let $C_1$ be a curve of genus $2$ and degree $6$ on a smooth cubic threefold~$Y$, which is general in the sense of 
Theorem~\ref{Thm:WeakFano}\ref{case:thm2}\ref{case:thmii}.
Let $\Lambda$ be a general pencil of hyperquadric sections containing~$C_1$. 
Then the base locus of $\Lambda$ is equal to $C_1 \cup C_2$ where $C_2$ is another general smooth curve of type $(2,6)$.
\end{prop}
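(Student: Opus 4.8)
The plan is to work on the weak-Fano threefold $X\to Y$ obtained by blowing up $C_1$, together with the morphism $\sigma\colon X\to\p^3$ of Proposition~\ref{Prop:TauOnBlowUp} and its associated pseudo-automorphism $\tau$. A pencil $\Lambda$ of hyperquadric sections of $Y$ through $C_1$ corresponds to a pencil inside $\lvert -K_X\rvert$, hence to the preimage $\sigma^{-1}(\lambda)$ of a line $\lambda\subset\p^3$. The base locus of $\Lambda$ on $Y$ is then the image under $\pi$ of the base locus of this pencil on $X$, which is $\sigma^{-1}(\lambda)$; since a general fibre of $\sigma$ is two points, $\sigma^{-1}(\lambda)$ is a (generically) double cover of $\p^1$, and its two "components" are swapped by $\tau$. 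The key point is that $\sigma^{-1}(\lambda)$ meets the exceptional divisor $E$ in a curve that projects via $\pi$ onto $C_1$, and meets the strict transform of each hyperquadric section in the residual curve $C_2$; concretely, $\pi(\sigma^{-1}(\lambda)) = C_1\cup C_2$ where $C_2$ is the residual base curve.

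First I would fix the line $\lambda\subset\p^3$ general, so that $D:=\sigma^{-1}(\lambda)$ is smooth (Bertini, using that $\lvert -K_X\rvert$ is base-point-free by Proposition~\ref{Prop:smoothgeneral}) of class $(-K_X)^2$; by the computation in the proof of Proposition~\ref{Prop:TauOnBlowUp}, $(-K_X)^2 = (2H_X-E)^2 = 6\ell - 10f$ (using $d=6$, $g=2$), so $D\cdot E = 10$ and $D\cdot f$-type intersections are controlled. The next step is to compute the arithmetic genus of $D$ via adjunction on $X$: $2p_a(D)-2 = (K_X + D)\cdot D = (-K_X)\cdot(-K_X)^2 = (-K_X)^3 = 2$, so $D$ is an irreducible curve of genus $2$ — but wait, that is on $X$; what matters is the image. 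The cleaner route: $D$ is a smooth curve, $\sigma|_D\colon D\to\lambda\cong\p^1$ is a degree-$2$ morphism, and $\tau$ restricts to the hyperelliptic involution of $D$; thus $\pi(D)$ is a curve in $Y$, and I must show $\pi(D) = C_1\cup C_2$ with $C_2$ of type $(2,6)$. Since every hyperquadric section $Q$ in the pencil $\Lambda$ contains $C_1$, and $Q\cap(\text{general }Q')$ has degree $\deg Q\cdot 3 = \dots$ — more precisely the base locus of $\Lambda$ on $Y$ is a curve of total degree equal to $(-K_X)^2\cdot H_X = 12-d = 6$ pushed appropriately; I would instead intersect: $C_1\cup C_2 = Q\cap Q'\cap Y$ for two general members, which has degree $2\cdot 2\cdot 3 = 12$, so $\deg C_2 = 12 - 6 = 6$.

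For the genus of $C_2$, the cleanest argument is liaison: $C_1$ and $C_2$ are linked by the complete intersection $Q\cap Q'\cap Y$ (a $(2,2,3)$ complete intersection, hence a curve with dualizing sheaf $\omega = \mathcal{O}(2+2+3-5) = \mathcal{O}(2)$ restricted to it, by adjunction in $\p^4$). Standard liaison formulas (e.g. Peskine–Szpiro, or just Riemann–Roch on the linked curve) then give $p_a(C_2) - p_a(C_1) = \tfrac12(\deg C_2 - \deg C_1)(\text{twist})$; plugging in $\deg C_1 = \deg C_2 = 6$ forces $p_a(C_2) = p_a(C_1) = 2$. Finally, smoothness and irreducibility of $C_2$ for general $\Lambda$: this follows because $C_2$ is identified (via $\tau$ and $\pi$) with the image of $\tau(E\cap D)$ or equivalently is the strict transform under $\pi$ of a component of $D$; since $D$ is smooth and connected and $\sigma|_D$ is a double cover, one checks $D$ has two "halves" only after the relevant base change — more honestly, $C_1\cup C_2$ is connected (it is a complete intersection, hence connected by the $\p^4$ analogue of the connectedness of complete intersections), and $C_1$ is one irreducible component, so $C_2$ is the union of the others; a dimension count over the (irreducible) family of pencils, combined with the existence in Proposition~\ref{Prop:ExistenceWithoutLines} of a configuration where $C_2$ is manifestly smooth and of type $(2,6)$, upgrades this to: for general $\Lambda$, $C_2$ is smooth irreducible of type $(2,6)$, and by Theorem~\ref{Thm:WeakFano} it too has no $3$-secant line.

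The main obstacle I expect is the last step — separating the base locus cleanly into the two irreducible pieces $C_1$ and $C_2$ and controlling the singularities of $C_2$ for general $\Lambda$. A priori $C_1\cup C_2$ could be tangent along points, or $C_2$ could acquire a node, so the argument must either produce one explicit good example (as in \S\ref{Sec:hyperquadric}) and spread genericity, or analyze the map $\sigma$ near the ramification locus. The degree and arithmetic-genus bookkeeping (via complete-intersection/liaison) is routine; the geometry of why the residual curve is again \emph{smooth of the same type} is where care is needed, and I would lean on the existence statement Proposition~\ref{Prop:ExistenceWithoutLines} together with semicontinuity to finish.
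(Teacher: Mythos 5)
Your numerical bookkeeping is fine, and it even gives a pleasant alternative to the paper's computation: the paper identifies the classes of $C_1$ and of $C_1\cup C_2$ on the degree-$4$ Del Pezzo surface of Proposition~\ref{Prop:DelPezzo4} and reads off that $C_2$ has type $(2,6)$, whereas your liaison argument (linkage by the $(2,2,3)$ complete intersection $Q\cap Q'\cap Y$ in $\p^4$) gives the same degree and arithmetic genus more directly. The genuine gap is in the step you yourself flag as delicate: showing that the residual scheme is a \emph{smooth irreducible} curve, so that arithmetic genus equals genus and the conclusion ``$C_2$ is general'' makes sense. None of your three candidate arguments closes this. The ``two halves of $D$'' picture is a misconception: for general $\lambda$ the curve $D=\sigma^{-1}(\lambda)$ is an irreducible curve of arithmetic genus $2$, a double cover of $\lambda\cong\p^1$ via $\sigma$, which contains no component lying over $C_1$ (it meets $E$ in only $D\cdot E=(-K_X)^2\cdot E=10$ points) and maps onto $C_2$ alone. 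Connectedness of complete intersections only gives connectedness of $C_1\cup C_2$, not smoothness or irreducibility of the residual piece. And the fallback ``one explicit good example plus semicontinuity'' is not available as stated: Proposition~\ref{Prop:ExistenceWithoutLines} produces individual smooth curves of type $(2,6)$ in a singular hyperquadric section, not a pencil whose base locus visibly splits into two smooth such curves, so there is no established member of the family to spread genericity from.

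The fix is a Bertini argument that you gesture at but do not pin down. The paper restricts to the strict transform $\tilde S$ of a general member $S$ of $\Lambda$, which is smooth by Proposition~\ref{Prop:smoothgeneral}: the residual curves form the restriction to $\tilde S$ of a subpencil of $\lvert -K_X\rvert$, which is base-point-free and not composed with a pencil because $\sigma$ is a generically finite morphism onto $\p^3$; the strong form of Bertini then makes the general residual curve smooth and irreducible. (In your language: for general $\lambda$ the double cover $D\to\lambda$ is smooth and connected and $\pi$ maps it isomorphically onto $C_2$.) Finally, your appeal to Theorem~\ref{Thm:WeakFano} for the absence of $3$-secant lines to $C_2$ runs in the wrong direction --- that theorem \emph{assumes} the blow-up is weak-Fano to conclude there is no $3$-secant. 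The correct argument is that a $3$-secant line to $C_2$ would be contained in every hyperquadric section of the pencil, hence in the one-dimensional base locus $C_1\cup C_2$, which is absurd.
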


\begin{proof}
By Proposition~\ref{Prop:smoothgeneral}, a general member $S$ of $\Lambda$ is smooth. 
After blow-up of $C_1$, the pencil of residual curve $C_2$ corresponds to the restriction to $\tilde S$ of a subpencil of the linear system $\lvert-K_X\rvert$.
We know from Proposition~\ref{Prop:TauOnBlowUp} that $\lvert-K_X\rvert$ induces a surjective morphism $X\to\p^3$, hence $\lvert-K_X\rvert_S$ has no base-point. The fact that the image of $\lvert-K_X\rvert$ is not a curve implies, by Bertini's Theorem (in its strong form, see e.g.~\cite[Ex.~III.11.3]{Har} or \cite[Theorem~5.3]{Kleiman}) that a general element of $\lvert-K_X\rvert_S$ is an irreducible smooth curve.

It remains to see that $C_2$ has type $(2,6)$ and is general. 
One can check this using the fact that $C_1$ and $C_2$ are embedded in a smooth Del Pezzo surfaces of degree 4. 
In the notation of Proposition~\ref{Prop:DelPezzo4}, $C_1$ comes from a curve on $\p^2$ of degree $4$ and multiplicities $(2,1,1,1,1)$, and $C_1 \cup C_2$  from a curve of degree $9$ and multiplicities $(3,3,3,3,3)$. 
Thus $C_2$ comes from a curve of degree $5$ and multiplicities $(1,2,2,2,2)$, which yields that $C_2$ has genus 2 and degree 6.
Finally $C_2$ does not admit any 3-secant line, because such a line should be in the base locus of $\Lambda$; so $C_2$ is general in the sense of Theorem~\ref{Thm:WeakFano}\ref{case:thm2}\ref{case:thmii}.
\end{proof}

We can now obtain the proof of Proposition~\ref{Prop:mainC}:

\begin{prop}
Let $Y\subset \p^4$ be a smooth cubic threefold. Let $Q_1,Q_2\subset Y$ be two hyperquadric sections such that $Q_1\cap Q_2=C_1\cup C_2$, where $C_1,C_2$ are two smooth curves of genus $2$ and degree $6$ $($see Proposition~$\ref{Prop:Pairof26})$.

Let $\pi\colon Z\to Y$ be the blow-up of $C_1\subset Y$, followed by the blow-up of the strict transform of $C_2$. Then, $Z$ is a smooth threefold of Picard rank $3$ that admits a pseudo-automorphism of dynamical degree $49+20\sqrt{6}$.
\end{prop}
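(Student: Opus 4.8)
The plan is to build the pseudo-automorphism of $Z$ as a composition of two pseudo-automorphisms coming from the two curves of type $(2,6)$, and to compute its dynamical degree from the action on $\Pic(Z)$. First I would set up coordinates on $\Pic(Z)$: write $H_Z$ for the pull-back of a hyperplane section of $Y$, and $E_1, E_2$ for the (total) transforms of the exceptional divisors over $C_1$ and $C_2$; since $C_1$ and $C_2$ are disjoint (they are the two components of a base locus of a pencil of smooth surfaces, and a general member is smooth hence meets each transversally), the second blow-up is along a smooth curve isomorphic to $C_2$ and $(H_Z, E_1, E_2)$ is a basis. By Proposition~\ref{Prop:TauOnBlowUp}, blowing up $C_1$ alone gives a weak-Fano $X_1$ carrying a pseudo-automorphism $\tau_1$ with a known $2\times 2$ matrix on $\mathrm{span}(H_{X_1}, E_1)$. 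I would then argue that $\tau_1$ lifts to a pseudo-automorphism $\hat\tau_1$ of $Z$: because $\tau_1$ is an isomorphism in codimension~$1$ and $C_2$ is the base locus of the residual pencil (invariant under $\tau_1$, which is the Geiser-type involution exchanging the two points of the anticanonical map), the strict transform of $C_2$ is sent isomorphically to itself, so $\hat\tau_1$ acts on $\Pic(Z)$ by the given $2\times 2$ block on $(H_Z, E_1)$, fixing the class $E_2$ up to the correction forced by $\tau_1^* H \ne H$. Symmetrically, Proposition~\ref{Prop:Pairof26} tells us $C_2$ is again a general curve of type $(2,6)$, so blowing it up first produces the analogous involution $\tau_2$, which likewise lifts to $\hat\tau_2 \in \mathrm{Bir}(Z)$ acting by the same matrix on $(H_Z, E_2)$ and correspondingly on $E_1$.

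The next step is bookkeeping: I would write down the $3\times 3$ matrices $M_1 = \hat\tau_1{}^*$ and $M_2 = \hat\tau_2{}^*$ in the basis $(H_Z, E_1, E_2)$. From Proposition~\ref{Prop:TauOnBlowUp} with $(g,d)=(2,6)$ the relevant $2\times 2$ block is $\left(\begin{smallmatrix} 11 & 20 \\ -6 & -11\end{smallmatrix}\right)$; one has to determine how $E_2$ transforms, which is fixed by requiring $M_1$ to preserve $-K_Z \sim 2H_Z - E_1 - E_2$ (the anticanonical class is invariant under any pseudo-automorphism), to have order~$2$, and to restrict correctly on the $X_1$-block. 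This pins down $M_1$ uniquely, and $M_2$ is obtained by swapping the roles of $E_1$ and $E_2$. Then $\hat\tau_2 \circ \hat\tau_1$ is the desired pseudo-automorphism of $Z$, its inverse is conjugate to it (composition of two involutions), so it is algebraically stable and its dynamical degree equals the spectral radius of $M_1 M_2$. I would compute the characteristic polynomial of $M_1M_2$; it will factor as $(t-1)$ (the $-K_Z$ eigenvalue) times a quadratic whose roots multiply to~$1$, and the claim is that the larger root is $49 + 20\sqrt{6}$, i.e.\ the quadratic is $t^2 - 98t + 1$. One sanity check: $49 + 20\sqrt 6 = (5 + 2\sqrt 6)^2$ and $5 + 2\sqrt 6$ is the larger eigenvalue of the single $2\times 2$ block $\left(\begin{smallmatrix} 11 & 20 \\ -6 & -11\end{smallmatrix}\right)$ (trace $0$? no — rather one checks $\lambda + \lambda^{-1} = 98$ emerges from the product), which makes the answer plausible and gives a way to catch arithmetic slips.

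The main obstacle I anticipate is not the eigenvalue computation but justifying rigorously that $\hat\tau_1$ and $\hat\tau_2$ are genuine pseudo-automorphisms of $Z$ — that is, isomorphisms in codimension~$1$ with \emph{no} contracted divisor on $Z$ itself. The subtlety is that $\tau_1$ on $X_1$ is an isomorphism away from finitely many flopping curves (the anticanonical morphism is small), and I must check that $C_2$ (or rather its strict transform) does not meet those curves in a way that would force $\hat\tau_1$ to contract the exceptional divisor $E_2$, and also that the flopping curves of $X_1$ and the curve $C_2$ interact generically — this is where generality of $C_1$ in the sense of Theorem~\ref{Thm:WeakFano}\ref{case:thm2}\ref{case:thmii}, and the description in Proposition~\ref{Prop:Pairof26} of $C_1 \cup C_2$ as a complete intersection inside a degree-$4$ Del Pezzo surface, will be used. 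Granting that, the statement that $Z$ has Picard rank~$3$ is immediate from two successive blow-ups of smooth curves in a smooth threefold of Picard rank~$1$, and the rest is linear algebra.
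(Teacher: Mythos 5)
Your overall strategy is the same as the paper's: lift the two involutions of Proposition~\ref{Prop:TauOnBlowUp} to pseudo-automorphisms $\tau_1',\tau_2'$ of $Z$, write their actions on $\Pic(Z)$ in the basis $(H,E_1,E_2)$, and take the spectral radius of the product; the matrices and the value $49+20\sqrt6$ (roots of $t^2-98t+1$, the non-trivial factor of the characteristic polynomial) are exactly what the paper obtains. There is, however, one genuinely false assertion in your setup: $C_1$ and $C_2$ are \emph{not} disjoint. Being the two components of the base locus of a pencil of smooth surfaces does not force disjointness: the curve $Q_1\cap Q_2$ is a $(2,2)$ complete intersection in $Y$, so by adjunction it has arithmetic genus $13$, whereas two disjoint smooth curves of genus $2$ would give arithmetic genus $3$; equivalently, in the degree-$4$ Del Pezzo surface of Proposition~\ref{Prop:DelPezzo4} one has $C_1\sim 4L-2E_1-E_2-\dots-E_5$ and $C_2\sim 5L-E_1-2E_2-\dots-2E_5$, so $C_1\cdot C_2=10$.

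This matters for your ``symmetrically'' step. Since $C_1\cap C_2\neq\emptyset$, blowing up $C_2$ first and then the strict transform of $C_1$ produces a threefold $Z_2$ that is \emph{not} isomorphic to $Z$; it is only isomorphic to $Z$ away from the preimage of $C_1\cap C_2$, i.e.\ the two are pseudo-isomorphic. So $\tau_2$ lifts a priori to a pseudo-automorphism of $Z_2$, and you must transport it to $Z$ through this pseudo-isomorphism before composing with $\tau_1'$; this is exactly the point the paper makes explicit. The conclusion survives because a pseudo-isomorphism identifies the Picard groups compatibly with the bases $(H,E_1,E_2)$, so the matrix of $\tau_2'$ on $\Pic(Z)$ really is the $E_1\leftrightarrow E_2$ swap of that of $\tau_1'$ --- but as written your argument silently assumes the two blow-up orders coincide, which they do not. (A smaller point: the invariance of the strict transform $\tilde C_{3-i}$ and the fact that $E_{3-i}\mapsto E_{3-i}$ exactly, with no ``correction'', follow most cleanly from the paper's observation that $\tilde C_{3-i}$ is the intersection of the strict transforms of $Q_1$ and $Q_2$, hence the fibre of a line under $\sigma_i$, which $\tau_i$ commutes with. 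The Picard rank $3$ claim is of course still true without disjointness, since each blow-up is along a smooth irreducible curve.)
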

\begin{proof}
For $i=1,2$, we denote by $\pi_i\colon X_i\to Y$ the blow-up of $C_i$, and by $\tau_i\in \Bir(X_i)$ the birational involution given by Proposition~\ref{Prop:TauOnBlowUp}, which is a pseudo-automorphism. Let us observe that the strict transform $\tilde{C}_{3-i}\subset X_i$ of $C_{3-i}$ on $X_i$ is equal to the intersection of the strict transforms of $H_1$ and $H_2$. Hence, $\tilde{C}_{3-i}$ is the fibre of a line by the birational morphism $\sigma_i\colon X\to \p^3$ given by $|-K_{X_i}|$. In particular, it is invariant by $\tau_i$. This implies that $\tau_i$ lifts to a pseudo-automorphism on the blow-up $\pi_i'\colon Z_i\to X_i$ of $\tilde{C}_{3-i}$.

Note that $\pi_1\circ \pi_i'$ is equal to $\pi\colon Z\to \p^3$, up to an isomorphism $Z\to Z_1$, and that $\pi_2\circ \pi_2'$ is equal to $\pi\colon Z\to \p^3$, up to pseudo-isomorphism $Z\dasharrow Z_2$ (which is an isomorphism outside of the pull-back of $C_1\cap C_2\subset Y$). In particular, the maps $\tau_1,\tau_2$ yield two pseudo-automorphisms $\tau_1',\tau_2'$ of $Z$.

Finally we compute the dynamical degree of the pseudo-automorphism $\tau_1' \circ \tau_2'$. 
We express the action of $\tau_1'$ and $\tau_2'$ on $\Pic(Z)$ by choosing the natural basis $H, E_1, E_2$ (pull-back of hyperplane and exceptional divisors). 
We compute:
\begin{equation*}
\tau_{1*}' \circ \tau_{2*}' = \begin{pmatrix} 11 & 20 & 0 \\ -6 & -11 & 0 \\ 0 & 0 & 1 \end{pmatrix}
\begin{pmatrix} 11 & 0 & 20 \\ 0 & 1 & 0 \\ -6 & 0 & -11 \end{pmatrix}
= \begin{pmatrix} 121 & 20 & 220 \\ -66 & -11 & -120 \\ -6 & 0 & -11 \end{pmatrix}.
\end{equation*}
On checks that $\tau_{1*}' \circ \tau_{2*}'$ has spectral radius equal to $49+20\sqrt{6}$. 
\end{proof}

\begin{rem}
We chose to describe the example on a blow-up of a cubic threefold since this is the general setting of this paper.
However one can do essentially the same construction starting with a curve of genus $2$  and degree~$8$ on $\p^3$, whose blow-up is studied in \cite{BL}. 
\end{rem}

\bibliographystyle{myalpha}
\bibliography{biblio}

\end{document}